\tikzset{->-/.style={decoration={markings,mark=at position #1 with {\arrow{>}}},postaction={decorate}}}
\numberwithin{equation}{section}
\newtheorem{theorem}{Theorem}[section]
\newtheorem{lemma}[theorem]{Lemma}
\newtheorem{proposition}[theorem]{Proposition}
\newtheorem{corollary}[theorem]{Corollary}
\theoremstyle{definition}
\newtheorem{definition}[theorem]{Definition}
\newtheorem{observation}[theorem]{Observation}
\newtheorem{def-prop}[theorem]{Definition-Proposition}
\newtheorem{remark}[theorem]{Remark}
\newtheorem{example}[theorem]{Example}
\newtheorem{notation}[theorem]{Notation}
\newtheorem{question}[theorem]{Question}
\DeclareMathOperator{\supp}{supp}
\DeclareMathOperator{\im}{Im}
\DeclareMathOperator{\ini}{in}
\DeclareMathOperator{\spa}{span}
\definecolor{navyblue}{rgb}{0.0, 0.0, 0.5}
\definecolor{darkred}{rgb}{0.55, 0.0, 0.0}
\newcommand{\MB}{{\mathcal B}}
\newcommand{\NN}{{\mathbb N}}
\newcommand{\KK}{{\mathbb K}}
\begin{document}

\title{Multi-Rees Algebras of Strongly Stable Ideals}

\author{Selvi Kara}
\address{Department of Mathematics, University of Utah, 155 1400 E, Salt Lake City, UT 84112}
\email{selvi@math.utah.edu}
\urladdr{}

\author{Kuei-Nuan Lin}
\address{Department of Mathematics,
Penn State University, Greater Allegheny campus\\ 4000 University Dr, McKeesport, PA 15132, USA}
\email{kul20@psu.edu}
\urladdr{}

\author{Gabriel Sosa Castillo}
\address{Department of Mathematics, Colgate University, 3 Oak Drive Hamilton, NY 13346, USA}
\email{gsosacastillo@colgate.edu }
\urladdr{}

\keywords{Rees algebra, special fiber ring, toric ring, Koszul algebra, Gr\"obner basis, strongly stable ideals, fiber graph}
\thanks{2020 {\em Mathematics Subject Classification}. Primary  13A30, 13P10; Secondary 05E40, 13C05, 13F55, 05C20}

\begin{abstract} 

   We prove that  the multi-Rees algebra $\mathcal{R}(I_1 \oplus \cdots \oplus I_r)$  of a collection of strongly stable ideals $I_1, \ldots, I_r$ is of fiber type. In particular, we provide a Gr\"obner basis for its defining ideal as a union of a Gr\"obner basis for its special fiber and binomial syzygies.  We also study the Koszulness of $\mathcal{R}(I_1 \oplus \cdots \oplus I_r)$ based on parameters associated to the collection. Furthermore, we establish a quadratic Gr\"obner basis of the defining ideal of $\mathcal{R}(I_1 \oplus I_2)$ where each of the strongly stable ideals has two quadric Borel generators. As a consequence, we conclude that this multi-Rees algebra is Koszul.

\end{abstract}

\maketitle


\section*{Introduction}

Rees algebras are central and important objects in commutative algebra and algebraic geometry (see \cite{vasconcelos1994arithmetic}) with numerous appearances in various other fields including elimination theory, geometric modeling and chemical reaction networks (see \cite{CWL}, \cite{Cox} and \cite{CoxLinSosa}). 
From a geometrical point of view, the Rees algebra $\mathcal{R} (I)$ of an ideal $I$ in a polynomial ring $S=\mathbb{K}[x_1,\ldots, x_n]$ is the homogeneous coordinate ring of two fundamental objects: the blowup of a projective space along the subscheme defined by $I$ and the graph of a rational map between projective spaces defined by the generators of $I$.  Naturally, the multi-Rees algebra of a collection of ideals $I_1, \ldots, I_r$ of $S$ is the homogeneous coordinate ring of the blowup along the subschemes defined by the ideals.  The multi-Rees algebra of $I_1, \ldots, I_r$ is also the Rees algebra of the module $I_1 \oplus \cdots \oplus I_r$  and it is defined as the multigraded $S$-algebra:
$$\mathcal{R} (I_1 \oplus \cdots \oplus I_r)= \bigoplus_{(a_1, \ldots, a_r) \in \mathbb{Z}_{\geq 0}^n} I_1^{a_1} \cdots I_r^{a_r}t_1^{a_1}\cdots t_r^{a_r} \subseteq S[t_1,\ldots, t_r]$$
for auxiliary variables $t_1, \ldots, t_r$.  A related object is the special fiber ring which is associated to the image of the blowup map. We denote the special fiber ring  by $\mathcal{F} (I_1 \oplus \cdots \oplus I_r)$ and it is equal to $\mathcal{R} (I_1 \oplus \cdots \oplus I_r) \otimes_{S}\mathbb{K}$.

A fundamental problem in the study of Rees and multi-Rees algebras is finding the implicit equations of the defining ideals $\mathcal{L}$ and $\mathcal{K}$ of the multi-Rees algebra and its special fiber, respectively, such that $\mathcal{R} (I_1 \oplus \cdots \oplus I_r) \cong S[T_1,\ldots, T_N]/\mathcal{L}$  and $\mathcal{F} (I_1 \oplus \cdots \oplus I_r) \cong \mathbb{K}[T_1,\ldots, T_N]/\mathcal{K}$  where $N$ is the total number of minimal generators of $I_1, \ldots, I_r$. This challenging problem has been studied extensively for Rees algebras ($r=1$), however, it is still open for many classes of ideals. In recent years, there has been some progress toward addressing this problem for the multi-Rees algebras \cite{dipasquale2020koszul, Babak, LinPolini, Sos} albeit the increased complexity of this setting.  In the light of all the established cases, a complete solution can be provided if the ideals in question have specific structures.

In this paper, we study the multi-Rees algebras of strongly stable ideals (also called Borel ideals in literature). These ideals are of special importance in computational commutative algebra  due to their nice combinatorial structure; in characteristic zero, these ideals coincide with Borel-fixed ideals and they occur as generic initial ideals (\cite{BS2}, \cite{Ga}).

In addition to finding defining equations of the multi-Rees algebras  of strongly stable ideals and their special fiber rings, we are interested in detecting when these algebras are Koszul.  An arbitrary graded ring $R$ over a field $R_0 = \mathbb{K}$ is Koszul if the residue field $R/R_{+} \cong \mathbb{K}$ has a linear resolution over $R$. A common approach to prove an algebra $R$ is Koszul is to show that it is $G$-quadratic, that is, $R \cong S/J$ is the quotient of a polynomial ring $S$ by an ideal $J \subset S$ such that $J$ has a Gr\"obner basis of quadrics with respect to some monomial order (see  \cite[Theorem~6.7]{EH}, \cite{F}).  Many of the classically studied rings in commutative algebra and algebraic geometry are Koszul and these algebras have good homological features; as  the authors of \cite{conca2013koszul} put  ``a homological life is worth living in a Koszul algebra." It is well-known that Koszulness of $\mathcal{R}(I)$ implies that $I$ has linear powers, i.e., every power of $I$ has a linear resolution \cite[Corollary 3.6]{Blum}. The multigraded version of this result is proved by Bruns and Conca in \cite[Theorem 3.4]{BC}; when $\mathcal{R} (I_1 \oplus \cdots \oplus I_r)$ is Koszul, then products of $I_1,\ldots, I_r$ have linear resolutions.

Both problems of interest, implicit equations for the defining ideals and Koszulness, can be answered simultaneously by identifying  an explicit quadric Gr\"obner basis for the defining ideal of the multi-Rees algebra or its special fiber ring. However, it is difficult to find such Gr\"obner basis in general.  One of the earlier works on the multi-Rees algebras of strongly stable ideals addressing both problems is due to Lin and Polini \cite{LinPolini} in which they consider powers of maximal ideals. In particular, they provide quadric Gr\"obner bases of $\mathcal{L}$ and $\mathcal{K}$ in \cite[Theorem 2.4]{LinPolini} and further show that the associated multi-Rees algebra and  its special fiber ring are both Koszul domain. Their work is extended to a more general class of principal strongly stable ideals by Sosa in \cite[Theorem 2.2]{Sos}.

A closer look into the literature on the Rees algebras of strongly stable ideals suggest a systematic approach for the multi-Rees setting. Namely, one can study the multi-Rees algebras of strongly stable ideals $I_1, \ldots, I_r$  by considering three parameters: the number of ideals $r$, the number of \emph{Borel generators} of each ideal, and the degrees of Borel generators. Note that when the first parameter $r$ is equal to one, the objects are the Rees algebras of strongly stable ideals and the Koszulness of these objects are studied by controlling the remaining two parameters. Authors of \cite{BC}, \cite{D}, and \cite{dipasquale2019rees} investigate the Koszulness of the Rees algebras of strongly stable ideals (see \Cref{sec:4} for details).

In the investigation of quadric Gr\"obner bases of $\mathcal{L}$ and $\mathcal{K}$ in the multi-Rees setting, it is reasonable to start by considering classes of ideals whose Rees algebras or special fiber rings are $G$-quadratic. Emergence of this natural approach can be observed  in \cite{BC}. In one such instance, based on De Negri's result on the Koszulness of $\mathcal{F} (I)$ for a principal strongly stable ideal $I$ from \cite{D},  Bruns and Conca suggest that  it is ``very likely" that $\mathcal{F} (I_1 \oplus \cdots \oplus I_r)$ is defined by a  Gr\"obner basis of quadrics when each $I_i$ is principal strongly stable (see \cite[Page 3]{BC}). Very recently,  DiPasquale  and  Jabbar Nezhad confirm an extended version of Bruns and Conca's \emph{suggestion} for $\mathcal{R} (I_1 \oplus \cdots \oplus I_r)$ when each $I_i$ is principal strongly stable \cite[Corollary 6.4]{dipasquale2020koszul}.

In the study of Rees algebras of strongly stable ideals, it is possible to deduce the Koszulness of the Rees algebra from its special fiber thanks to a result of Herzog, Hibi and Vladoiu \cite[Theorem 5.1]{HHV}  which states that \emph{$\mathcal{R} (I)$ is of fiber type for a strongly stable ideal $I$}. The multi-Rees version of Herzog, Hibi and Vladoiu's result would be quite useful in the multi-Rees world.  We generalize their result to the multi-Rees algebra case and show that the multi-Rees algebra of strongly stable ideals is of fiber type, that is, the ideal $\mathcal{L}\subseteq S[T_1,\ldots, T_N]$ is generated by linear relations in $T_1, \ldots, T_N$ variables and the generators of $\mathcal{K}$. In particular, we prove that a Gr\"{o}bner basis of $\mathcal{L}$ is the union of a Gr\"{o}bner basis of $\mathcal{K}$ and a set of quadric binomials associated to first syzygies of $I_1 \oplus \cdots \oplus I_r$ (see \Cref{multireesisfibertype}). It is worth pointing that we allow ideals $I_1, \ldots, I_r$ to be generated in different degrees while each Borel generator of $I_i$ is of the same degree for $i \in \{1,\ldots, r\}$. An immediate application of \Cref{multireesisfibertype} is that the  multi-Rees algebra of strongly stable ideals is $G$-quadratic whenever its special fiber ring is.  Therefore,  Koszulness of the multi-Rees algebra of strongly stable ideals can be obtained from that of its special fiber ring.

In the late nineties, Conca and De Negri produce examples of strongly stable ideals indicating that the Rees algebras of strongly stable ideals with more than two Borel generators are not necessarily Koszul (see \cite[Example 1.3]{BC}). Following their footsteps, we provide a collection of examples of strongly stable ideals whose multi-Rees algebras are not Koszul  (see \Cref{sec:4}). Hence, if  $\mathcal{R} (I_1 \oplus \cdots \oplus I_r)$ is always Koszul, for a fixed arrangement of the three sets of parameters introduced earlier, (i.e. number of ideals, number of Borel generators of each ideal and degree of each Borel generator), we identify all possible values of the parameters in the arrangement (see \Cref{prop:KoszulCases}).

In \cite[Question 6.4]{dipasquale2019rees}, the authors conclude their paper with a question asking whether the multi-Rees algebra of strongly stable ideals $I_1, \ldots, I_r$ is Koszul where each $I_i$ has either one or two Borel generators. In this case, by \Cref{prop:KoszulCases}, at most two of the ideals have two Borel generators. Furthermore, if exactly two of the ideals have two Borel generators then these generators must be quadric or cubic. 
In the second half of the paper, we focus on the multi-Rees algebra of strongly stable ideals $I_1$ and $I_2$ with two quadric Borel generators. In particular, we prove in  \Cref{sec:6} that $\mathcal{F} (I_1 \oplus I_2)$ has a quadric Gr\"obner basis. We achieve it by obtaining an explicit quadric Gr\"obner basis for the toric ideal of the special fiber ring with respect to \emph{head and tail order} (see \Cref{thm:GBs7}). Hence  Koszulness of $\mathcal{R} (I_1 \oplus I_2)$ follows from \Cref{multireesisfibertype}.

Our paper is structured as follows. In \Cref{sec:prelim}, we collect the necessary terminology to be used throughout the paper. In \Cref{sec:directedGraph}, we introduce \emph{directed graph of a monomial} and use these objects to determine when a given collection of binomials (with a marked term) form a Gr\"obner basis (see \Cref{fibergraphinduces}). We prove our first main result, \Cref{multireesisfibertype}, in \Cref{sec:3}. In \Cref{sec:4}, we investigate  Koszulness of multi-Rees algebras of strongly stable ideals through examples and identify large classes of ideals whose multi-Rees algebras are not necessarily Koszul, and present our second main result, \Cref{prop:KoszulCases}. In Sections \ref{sec:5} and \ref{sec:6}, we study multi-Rees algebras of strongly stable ideals with two quadratic Borel generators. In particular, we prove our third main result, \Cref{thm:GBs7}, and its corollary, \Cref{thm:MultiReesGB}, in \Cref{sec:6}.

\subsection*{Acknowledgements}  The authors sincerely thank the reviewers for their helpful and constructive suggestions which improved the presentation of the manuscript immensely. The first named author was partially supported by the University of South Alabama Arts and Sciences Support and Development Award. Thanks to this award, part of this work was done while the second author visited the University of South Alabama. The second author thanks the 
Department of Mathematics and Statistics for their hospitality. Many of the computations related to this paper was done using Macaulay2 \cite{M2}.

\section{Preliminaries}\label{sec:prelim}
 Let $S=\KK [x_1,\ldots, x_n]$  be the polynomial ring  in $n$ variables over a field $\KK$ and suppose $S$ is equipped with the standard grading as a $\KK$-algebra. Recall that the graded reverse lexicographic order on $S$ is defined as the follows: for any $\mathbf{x}^A, \mathbf{x}^B \in S$ such that $A, B \in (\mathbb{N} \cup \{0\})^n$, we say $\mathbf{x}^A > \mathbf{x}^B$ when if either $\mathrm{degree}(\mathbf{x}^A) > \mathrm{degree}(\mathrm{x}^B)$ or  $\mathrm{degree}(\mathbf{x}^A) = \mathrm{degree}(\mathrm{x}^B)$ and the last non-zero entry of the vector of integers $A-B$ is negative.
\begin{definition}\label{DefBorelFixed}
\begin{enumerate}[(a)]
\item Let $\succ_{rlex}$ denote the graded reverse lexicographic order with the variable order $x_1>x_2> \cdots >x_n$ on $S$. We write $[n]=\{1,\ldots,n\}$. 
\item	A monomial ideal $I \subseteq S$ is called \emph{strongly stable} if for each monomial $m \in I$, we have $x_j\frac{m}{x_i} \in I$ whenever $x_i$ divides $m$ and $j <i$.
\item Fix a monomial $m \in S$, and  suppose that $x_i$ divides $m.$ Then for any $j<i$, the monomial $m'= x_j\frac{m}{x_i}$ is called a \emph{one step strongly stable reduction} of $m$.   
\item   Fix a degree $d.$ Then one can define a partial order $<$, called the \emph{strongly stable order}, on the degree $d$ monomials of $S$ by setting $m' < m$ whenever  $m'$ can be obtained from $m$ by a sequence of one step strongly stable reductions. In this case, we say $m'$ comes before $m$ in the strongly stable order.
\item  Let $\mathcal{M}=\{m_1, \ldots, m_k\}$ be a collection of degree $d$ monomials. Then the smallest strongly stable ideal containing $\mathcal{M}$ is called the \emph{strongly stable ideal generated by} $\mathcal{M}$ and denoted by $\MB(\mathcal{M})$. If $I=\MB(\mathcal{M})$, the set $\mathcal{M}$ is called a strongly stable generating set for $I$. The strongly stable ideal $I$ has a unique minimal strongly stable generating set corresponding to the latest monomial generators in the strongly stable  order and these monomials are called its \emph{strongly stable generators} or \emph{Borel generators}. For the remainder of the paper, we shall use the term Borel generators.  If $I=\MB(m)$ has only one Borel generator, we say $I$ is a prinpical strongly stable ideal. 
\end{enumerate}
\end{definition}

\begin{remark}
Strongly stable ideals are \emph{Borel-fixed} (i.e., fixed under the action of the Borel subgroup). In characteristic zero, the notion of strongly stable  coincide with Borel-fixed. 
\end{remark}

In this paper, our main objects are the multi-Rees algebras of strongly stable ideals. We recall the definition of a multi-Rees algebra and its special fiber ring below.

\begin{definition}\label{def:multi}
	Let $I_1, \dots, I_r$ be a collection of ideals in $S$. The \emph{multi-Rees algebra of} $I_1, \dots, I_r$ is the multi-graded $S$-algebra 
	\[ \mathcal{R}(I_1 \oplus \dots \oplus I_r)=\bigoplus_{(a_1, \ldots, a_r) \in \mathbb{Z}_{\geq 0}^n} I_1^{a_1} \cdots I_r^{a_r}t_1^{a_1}\cdots t_r^{a_r} \subseteq S[t_1,\ldots, t_r] \]
	where $t_1,\ldots, t_r$ are indeterminates over $S$.
	In particular, in the case $r=1$,  it is the classical Rees algebra of an ideal $I$. The multi-Rees algebra $\mathcal{R}(I_1 \oplus \dots \oplus I_r)$ is also the Rees algebra of the module $I_1\oplus \cdots \oplus I_r$. The \emph{special fiber ring of} $I_1, \dots, I_r$ is defined as $$\mathcal{F}(I_1 \oplus \dots \oplus I_r)  = \mathcal{R}(I_1 \oplus \dots \oplus I_r) \otimes_S \KK$$
where we regard $\KK$ as an $S$-algebra via $\KK \cong S/\langle x_1,\dots, x_n \rangle $.
	\end{definition}

	Let $I_i$ be a monomial ideal and  $\{u_{i,j}: 1 \leq j \leq s_i\}$ be the minimal generating set of $I_i$ for each $i \in [r]$. We can construct the presentation of $\mathcal{R}(I_1 \oplus \dots \oplus I_r)$ as follows. Consider the $S$-algebra homomorphism given by
	\begin{align*}
	 	\varphi : R= S[T_{i,j} ~:~ 1\leq i\leq r, 1\leq j \leq s_i] & \longrightarrow    S[t_1,\dots, t_r] \\
	 	T_{i,j} & \longmapsto u_{i,j} t_i
	\end{align*}
and extended algebraically. The ideal $\mathcal{L}=\ker (\varphi)$ is called the \emph{defining ideal} of the multi-Rees algebra and  the minimal generators of $\mathcal{L}$ are called the \emph{defining equations} of $\mathcal{R}(I_1 \oplus \dots \oplus I_r)$. The multi-Rees algebra of $I_1,\ldots, I_r$ is the quotient 
$$\mathcal{R}(I_1 \oplus \dots \oplus I_r) = S[T_{i,j} ~:~ 1\leq i\leq r, 1\leq j \leq s_i]  / \ker (\varphi) \cong \im \varphi.$$

The map $\varphi$ induces a surjective $\KK$-algebra homomorphism $\varphi'$ such that
	\begin{align*}
	 	\varphi ': R'=\KK [T_{i,j} ~:~ 1\leq i\leq r, 1\leq j \leq s_i] & \longrightarrow    \KK[u_{i,j}t_i ~:~ 1\leq i\leq r, 1\leq j \leq s_i] \\
	 	T_{i,j} & \longmapsto u_{i,j} t_i.
	\end{align*}
The special fiber ring, $\mathcal{F}(I_1 \oplus \dots \oplus I_r)$, is isomorphic to the image of $\varphi'$ and the ideal $\ker (\varphi ')$ is called the \emph{defining ideal} of the special fiber ring. Note that  $\mathcal{F}(I_1 \oplus \dots \oplus I_r)$ is a toric ring because $\ker (\varphi ')$ is a toric ideal associated to the monomial map $\varphi'$. We often use the notation $T(I_1 \oplus \dots \oplus I_r)$ to denote $\ker (\varphi ')$.

In what follows, we provide a quick overview of marked polynomials and Noetherian reductions in the context of Gr\"obner bases  theory.

\begin{definition}
A \emph{marked polynomial} is a polynomial $g \in S$ together with a ``marked" initial term $\text{in} (g)$, say $g =c\underline{u}+h$ where $u$ is a monomial with $c \in \KK^*$ and $h$ is a polynomial such that $u \notin \supp(h)$ and $ \text{in} (g)=u$.  Note that any of the terms appearing in $g$ can be chosen as $\text{in} (g)$.
\end{definition}

Let $\mathcal{G}= \{ g_i=c_i\underline{u_i}+h_i ~:~ 1\leq i\leq s\}$  be a collection of marked polynomials. In the following two examples, we investigate the existence of a monomial order $\tau$  such that $\mathcal{G}$  is marked coherently, namely, one has $\ini_{\tau} (g_i) = \underline{u_i}$ for each $g_i \in \mathcal{G}$.

\begin{example}
	Consider the polynomials $f=\underline{xy}-yz$ and $g=\underline{z^2}+x^2$ with the marked terms as the leading ones. If there exists a monomial order $\tau$ such that  $\ini_{\tau}  (f)= \underline{xy}$ and $\ini_{\tau} (g) = \underline{z^2}$, then  $yz <_{\tau} xy$ implies $z <_{\tau} x$, and $x^2<_{\tau}z^2$ implies that $x <_{\tau}z$ which is not possible. Hence, there is no such monomial order. In fact, for the marked initial terms, there is no monomial order that makes $\{ f,g\}$  a Gr\"obner basis for $\langle f, g \rangle$  unless $\text{char}(\mathbb{K}) \neq 2$. 
\end{example}

\begin{example}\label{2by2minors}
	Consider the ideal $I$ generated by the $2\times 2$ minors of the following matrix. 
	$$\left[ \begin{array}{ccc} x_1 & x_2 & x_3 \\ x_4 & x_5 & x_ 6 \end{array}\right]$$
	
	The ideal $I$ is generated by $f=\underline{x_1x_5}- x_2x_4$, $g=\underline{x_2x_6}-x_3x_5$ and $h=\underline{x_3x_4}-x_1x_6$. As in the previous example, there is no monomial order $\tau$ such that $\ini_{\tau} (f)= \underline{x_1x_5}$, $\ini_{\tau} (g) = \underline{x_2x_6},$ and $\ini_{\tau} (h)= \underline{x_3x_4}$. If such an order exists, then $x_2x_4 <_{\tau} x_1x_5$, $x_3x_5 <_{\tau} x_2x_6$ and $x_1x_6 <_{\tau} x_3x_4$. It implies that $(x_2x_4)(x_3x_5)(x_1x_6) <_{\tau} (x_1x_5)(x_2x_6)(x_3x_4)$ which is not possible. 
\end{example}

As we see from the above examples, given a small number of marked polynomials, it is rather straightforward to show there is no monomial order $\tau$ such that marked terms are initial terms with respect to $\tau$. If there are more marked polynomials, it is more difficult to figure out whether $\mathcal{G}$ is marked coherently. Naturally, one might ask whether there is a way to determine  when  $\mathcal{G}$ is marked coherently. We could further question when $\mathcal{G}$ is a Gr\"obner basis for the ideal generated by the collection with respect to $\tau$. These questions are answered in \cite{S} in terms of reduction relations. We conclude this section by collecting several necessary definitions and recalling the related answer from \cite{S}.

\begin{definition} 
Let $\mathcal{G}=\{g_1, \dots, g_s\}$ be a collection of marked polynomials with initial terms $\{u_1, \dots, u_s\}$ where $ \ini (g_i) = u_i$ for each $i$.  Let  $f$ be a polynomial in $S$. If there exists a monomial $u \in \supp(f)$ such that $u_i$ divides $u$ for some $1 \leq i \leq s$, let $\displaystyle f':= f - \frac{cu}{c_iu_i} g_i$ where $f = cu+h$ with $c,c_i \in \KK^*$, coefficients of $u$ and $u_i$, such that $u \notin \supp (h)$. We call the polynomial $ f'$  a \emph{one step reduction} of $f$ with respect to  $\mathcal{G}$. We denote this reduction relation by  $f \longrightarrow_{\mathcal{G}} f'$.
\end{definition}

 A collection of marked polynomials $\mathcal{G}=\{g_1, \dots, g_s\}$ is said to define a \emph{Noetherian reduction relation} if the number of successive one step reductions of any polynomial is finite, i.e., for a polynomial $f$ and a chain of reduction relations $f \longrightarrow_{\mathcal{G}} f' \longrightarrow_{\mathcal{G}}  \cdots \longrightarrow_{\mathcal{G}} f''$, there is no possible one step reduction of $f''$ with respect to $\mathcal{G}$.

\begin{theorem}\cite[Theorem 3.12]{S}\label{reductionrelationNoetherian}
	Given a collection of marked polynomials $\mathcal{G}=\{g_1, \dots, g_s\}$, there exists a monomial order, $\tau$, such that $\ini_{\tau} (g_i)=u_i$ and $\mathcal{G}$ is a Gr\"obner basis for $\langle g_1, \dots, g_s\rangle$ with respect to $\tau$ if and only if the reduction relation $\longrightarrow_{\mathcal{G}}$ is Noetherian.
\end{theorem}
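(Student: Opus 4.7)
The plan is to establish both implications separately, with the reverse direction containing most of the substantive content.

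For the forward direction ($\Rightarrow$), I would assume a monomial order $<_\tau$ with $\ini_{<_\tau}(g_i) = u_i$ exists. In a one-step reduction $f \longrightarrow_{\mathcal{G}} f'$ attacking a term $cu$ of $f$ with $u = u_i w$, the term $cu$ is cancelled and only monomials of the form $\beta w$, $\beta \in \supp(h_i)$, are introduced; since $\beta <_\tau u_i$ by the initial-term hypothesis, each such monomial satisfies $\beta w <_\tau u$. Hence the multiset of monomials appearing in the support of the polynomial strictly decreases in the Dershowitz--Manna multiset extension of the well-order $<_\tau$, and the well-foundedness of that extension rules out infinite reduction chains.

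For the reverse direction ($\Leftarrow$), I would synthesize a monomial order directly from the reduction combinatorics. Define a relation $\succ$ on the monoid of monomials of $S$ by declaring $u \succ v$ iff there is a chain $u = w_0, w_1, \ldots, w_k = v$ with $k \geq 1$ in which each step satisfies $w_{j-1} = u_{i_j} z_j$ and $w_j = \beta_j z_j$ for some index $i_j$, monomial $z_j$, and $\beta_j \in \supp(h_{i_j})$. Transitivity follows from chain concatenation and compatibility with multiplication is built in ($u \succ v$ yields $uz \succ vz$ for any $z$). The key input from Noetherianity is irreflexivity: a hypothetical cycle $u \succ u$ could be lifted to an infinite polynomial reduction chain starting from the monomial $u$, reducing at each stage the monomial prescribed by the cycle (multiplying through by a generic monomial if necessary to prevent intermediate coefficients from accidentally cancelling). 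Thus $\succ$ becomes a translation-invariant strict partial order on monomials.

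The final and hardest step is to extend $\succ$ to a total monomial order $<_\tau$ that certifies the Gr\"obner basis property. Invoking Robbiano's classification of monomial orders by real weight matrices, I would seek a full-rank $M \in \RR^{n \times n}$ such that $M(\alpha_i - \beta) >_{\mathrm{lex}} 0$ for every generating pair $(u_i = x^{\alpha_i},\, x^{\beta} \in \supp(h_i))$ and $M e_k >_{\mathrm{lex}} 0$ for each standard basis vector. Feasibility of this finite system of strict lexicographic inequalities is equivalent, via a Farkas-type alternative, to the absence of a non-trivial non-negative rational combination of the constraint vectors summing to zero; any such combination would encode a $\succ$-cycle and contradict the irreflexivity already established. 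With $<_\tau$ in hand, $\ini_{<_\tau}(g_i) = u_i$ is automatic. For the Gr\"obner basis conclusion I would take $f \in \langle g_1, \ldots, g_s \rangle$ and apply top-down reduction: Noetherianity supplies termination, while any hypothetical nonzero remainder would yield an ideal element whose leading monomial is divisible by no $u_i$, which is ruled out by a standard-representation argument that reduces the question to local confluence on overlap pairs of the marked $u_i$'s. The two main obstacles are precisely the Farkas feasibility argument and this last local-confluence verification, both of which must invoke the Noetherianity hypothesis in subtle ways.
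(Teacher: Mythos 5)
The paper does not itself prove this result; it is cited from \cite[Theorem~3.12]{S} and used as a black box, so there is no in-paper argument to compare against. But the statement as transcribed over-claims relative to what Sturmfels actually proves: his Theorem~3.12 is the equivalence between Noetherianity of $\rightarrow_{\mathcal{G}}$ and $\mathcal{G}$ being \emph{marked coherently}, that is, the existence of $<_\tau$ with $\ini_{<_\tau}(g_i)=u_i$. The additional clause that $\mathcal{G}$ be a Gr\"obner basis for $\langle g_1,\dots,g_s\rangle$ is not part of that equivalence, and the implication ``Noetherian $\Rightarrow$ Gr\"obner basis'' is in fact false. Example~\ref{incompleteGrobner} of this paper is a counterexample: with $g_1=\underline{x_1x_3}-x_2^2$ and $g_2=\underline{x_1x_2}-x_3^2$, every one-step reduction strictly decreases the integer $\sum_{m\in\supp(f)}\deg_{x_1}(m)$, so $\rightarrow_{\mathcal{G}}$ is Noetherian, yet $x_3^3-x_2^3=x_2g_1-x_3g_2$ lies in the ideal and is $\mathcal{G}$-irreducible, so $\mathcal{G}$ is not a Gr\"obner basis for any term order with $x_1x_3$, $x_1x_2$ as leading monomials. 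Termination (Noetherianity) and confluence (Buchberger's criterion) are genuinely independent conditions.

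This is precisely where your proposal breaks. Your final paragraph tries to secure the Gr\"obner-basis conclusion by a ``local-confluence verification on overlap pairs'' that ``must invoke the Noetherianity hypothesis in subtle ways.'' There is no such invocation: in the example above the overlap $S$-pair is irreducible and nonzero despite termination. No argument of this shape can close the gap, because the claimed implication does not hold. What your argument does and can deliver is the genuine content of Sturmfels' theorem. The forward direction via the Dershowitz--Manna multiset well-order is correct (and needs only $\ini_{<_\tau}(g_i)=u_i$, not the Gr\"obner hypothesis). For the reverse direction, building the translation-invariant relation $\succ$, deriving its irreflexivity from Noetherianity, and then applying a Farkas-type alternative against Robbiano's matrix description of term orders is the standard route; the points requiring care are (i) the Farkas certificate is a nontrivial nonnegative integer relation making $\sum n_j(\alpha_{i_j}-\beta_j)$ lie in $-\mathbb{Z}_{\geq 0}^n$, and lifting it to a non-terminating chain amounts to reducing the monomial $\prod (x^{\beta_j})^{n_j}$ one marked factor at a time back to a multiple of itself, and (ii) for general marked polynomials (as opposed to binomials) you must rule out coefficient cancellation truncating the chain, which you flag but do not resolve. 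If you drop the Gr\"obner-basis clause from the statement being proved, the rest of your sketch is the right argument.
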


\section{Directed graphs of monomials via reductions}\label{sec:directedGraph}

In this section, we consider collections of marked polynomials consisting exclusively of homogeneous binomials, i.e., $\mathcal{G}= \{g_1,\dots, g_s\}$ such that $g_i=\underline{u_{i,1}}-u_{i,2}$  where $u_{i,1}$ and $ u_{i,2}$ are both monomials for all $i$. We first introduce a new notion called the \emph{directed graph} of a monomial in $S$ and use its combinatorial structure to detect when $\mathcal{G}$ is a Gr\"obner basis with respect to some monomial order in \Cref{fibergraphinduces}.

\begin{definition}
	Let $\mu$ be a monomial in $S$ and $\mathcal{G}= \{g_1=\underline{u_{1,1}}-u_{1,2},\dots, g_s=\underline{u_{s,1}}-u_{s,2}\}$ be a collection of marked binomials.  The \emph{directed graph} of $\mu$ with respect to $\mathcal{G},$ denoted by $\Gamma_{\mu} (\mathcal{G})$, is defined as follows:
	
	\begin{itemize}
	    \item The vertices are the monomials that can be obtained from $\mu$  by a sequence of one step reductions with respect to $\mathcal{G}$. 
	    \item For two vertices $v$ and $w$, there is a directed edge from $v$ to $w$  if $v \longrightarrow_{\mathcal{G}} w$, i.e., $w$ is a one step reduction of $v$ with respect to $\mathcal{G}$. 
	\end{itemize}

\end{definition}

\begin{example}\label{incompleteGrobner}
	Consider the collection of marked polynomials $\mathcal{G}=\{\underline{x_1x_3}-x_2^2,\underline{x_1x_2}-x_3^2\}$ and the monomial $x_1x_2x_3$. Let $g_1 =\underline{x_1x_3}-x_2^2$ and $g_2= \underline{x_1x_2}-x_3^2$. Then the directed graph of $x_1x_2x_3$ with respect to $\mathcal{G}$ is given below. Notice that this directed graph has two different sinks.

	\begin{figure}[hbt]
    \begin{center}
        \begin{tikzpicture}[scale=0.85,->,>=stealth]
         \node (0) at (0,0) {$x_1x_2x_3$};
         \node (1) at (1.5,-1.5) {$x_3^3$};
        \node (2) at (-1.5,-1.5) {$x_2^3$};
            \draw[-latex]  (0)--(1);
             \draw[-latex]  (0)--(2);
        \end{tikzpicture}
    \end{center}
    \end{figure}

One can see that the marked monomials are the leading terms for the collection $\mathcal{G}$ with respect to the (degree) lexicographic order where $x_1 > x_2 > x_3$. However,  $\mathcal{G}$ is not a Gr\"obner basis because the $S$-polynomial of $g_1$ and $g_2$ given below
$$S(g_1,g_2)= x_2g_1-x_3g_2=x_3^3-x_2^3$$
has a non-zero remainder when divided by $\mathcal{G}$.

\end{example}

\begin{example}\label{trueGrobner}
	Consider  $\mathcal{G}=\{\underline{x_1x_4}-x_2x_5,\underline{x_2x_3}-x_4^2\}$ and  the monomial $x_1x_2x_3x_4$. The directed graph of $x_1x_2x_3x_4$ with respect to $\mathcal{G}$ is given as follows.

	\begin{center}
		\begin{tikzpicture}[scale=0.85]
		\begin{scope}
		\node (1) at (3, 3) {$x_1x_2x_3x_4$};
		\node (2) at (1.5, 1.5) {$x_1x_4^3$};
		\node (3) at (4.5, 1.5) {$x_2^2x_3x_5$};
		\node (4) at (3, 0) {$x_2x_4^2x_5$};
		\end{scope}
		\draw[-latex] (1) edge [above]  (2);
		\draw[-latex] (1) edge [right] (3);
	    \draw[-latex] (2) edge [above]  (4);
        \draw[-latex] (3) edge [right] (4);		
		\end{tikzpicture}
	\end{center}
	
Note that the directed graph possesses a unique sink and has no (directed) cycles. Additionally, observe that $\mathcal{G}$ is a Gr\"obner basis with respect to the monomial order $\succ_{rlex}$.
\end{example}

\begin{example}\label{cycles}
	Consider the monomial $x_1x_3x_5x_6$ and  $\mathcal{G}=\{\underline{x_1x_5}-x_2x_4,\underline{x_2x_6}-x_3x_5,\underline{x_3x_4}-x_1x_6\}$ from Example \ref{2by2minors}. Below is the directed graph of $x_1x_3x_5x_6$ with respect to $\mathcal{G}$.
	
	\begin{center}
		\begin{tikzpicture}[scale=0.85]
		
		\begin{scope}
		\node (1) at (1.5, 1.5) {$x_1x_3x_5x_6$};
		\node (2) at (1.5, 0) {$x_2x_3x_4x_6$};
		\node (3) at (4.5, 0) {$x_3^2x_4x_5$};
		\node (4) at (-1.5, 0) {$x_1x_2x_6^2$};
		\end{scope}
		\draw[-latex] (1) edge [above]  (2);
		\draw[-latex] (2) edge [right] (3);
		\draw[-latex] (2) edge [above]  (4);
		\draw[-latex] (3) edge [right] (1);		
		\draw[-latex] (4) edge [right] (1);	
		\end{tikzpicture}
	\end{center}
It is clear that the directed graph has no sinks and it has two cycles. Additionally, recall from \Cref{2by2minors} that $\mathcal{G}$ is not a Gr\"obner basis. 
\end{example}

In the light of the previous examples, one can ask whether $\Gamma_{\mu} (\mathcal{G})$ possesses a unique sink and has no cycles if there exists a monomial order $\tau$ such that the collection of marked binomials $\mathcal{G}$  is a Gr\"obner basis with respect to $\tau$ and the initial terms are the marked monomials. In the next theorem, we answer this question and further show that the converse is true.

\begin{theorem}\label{fibergraphinduces}
Let $\mathcal{G}= \{g_1=\underline{u_{1,1}}-u_{1,2},\dots, g_s=\underline{u_{s,1}}-u_{s,2}\}$ be a collection of marked binomials. For every monomial $\mu$ in $S$, the directed graph of $\mu$ with respect to $\mathcal{G}$ possesses a unique sink and has no directed cycles if and only if there exists a monomial order $\tau$ such that $\mathcal{G}$ is a Gr\"obner basis with the marked monomials as the initial terms.
\end{theorem}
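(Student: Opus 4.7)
The plan is to argue both implications via Theorem \ref{reductionrelationNoetherian}, which translates the existence of a compatible monomial order making $\mathcal{G}$ a Gr\"obner basis into the Noetherian property of the reduction relation $\longrightarrow_\mathcal{G}$. The forward direction is essentially a translation of standard Gr\"obner basis theory, and the reverse direction is the substantive step.

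For the forward direction, suppose $<_\tau$ is a monomial order for which $\mathcal{G}$ is a Gr\"obner basis with the marked terms as initial terms. Then for each $g_i = u_{i,1} - u_{i,2}$ one has $u_{i,1} >_\tau u_{i,2}$. Any directed edge $v \longrightarrow w$ of $\Gamma_\mu(\mathcal{G})$ arises from a one-step reduction that replaces a factor $u_{i,1}$ of $v$ by $u_{i,2}$, so $w <_\tau v$. Since $<_\tau$ is a total order, cycles are impossible. Moreover, the sinks of $\Gamma_\mu(\mathcal{G})$ are precisely the monomial remainders of $\mu$ upon division by $\mathcal{G}$, and the Gr\"obner basis property forces this remainder to be unique, giving exactly one sink.

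For the reverse direction, assume $\Gamma_\mu(\mathcal{G})$ has a unique sink and no cycles (interpreted as holding for all monomials $\mu$ of relevance, in particular the lcm's $\lcm(u_{i,1}, u_{j,1})$ that enter Buchberger's $S$-polynomial criterion). The plan is to verify that $\longrightarrow_\mathcal{G}$ is Noetherian and then invoke Theorem \ref{reductionrelationNoetherian}. Because every $g_i$ is homogeneous of the same degree $d$, a one-step reduction replaces a factor $u_{i,1}$ in a monomial by $u_{i,2}$ of the same degree, preserving the total degree of each monomial in the polynomial. Therefore, throughout any reduction chain starting from a polynomial $f$, each monomial lies in the finite set of monomials whose degree appears in $f$. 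Combined with acyclicity of the monomial reduction graph, every reduction chain terminates; the unique sink condition supplies local confluence at the monomial level, which lifts to polynomial-level confluence by the standard rewriting argument. Together these yield the Noetherian property, and Theorem \ref{reductionrelationNoetherian} then produces the desired monomial order.

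The main obstacle I anticipate is bridging the gap between the graph-theoretic hypotheses, phrased for individual monomial graphs $\Gamma_\mu$, and the global Noetherian property of $\longrightarrow_\mathcal{G}$ acting on arbitrary polynomials. One must track that newly introduced monomials during reduction still lie in the finite monomial pool fixed by total degree, and handle the lift from monomial-level to polynomial-level confluence; pinning down the correct quantifier on $\mu$, so that the hypothesis covers all $\mu$ entering $S$-polynomial computations, is the conceptual hinge of the argument.
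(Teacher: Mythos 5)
Your forward direction is correct and matches the paper's argument (stated there contrapositively): an edge $v \to w$ forces $w <_\tau v$, ruling out cycles, and uniqueness of remainders under a Gr\"obner basis forces a unique sink. The approach to the reverse direction is also the right one — reduce to Noetherianity of $\longrightarrow_\mathcal{G}$ and invoke Theorem~\ref{reductionrelationNoetherian} — and your observation that degree-preservation (since all $g_i$ are homogeneous of the same degree) keeps all monomials in a finite pool is a genuinely useful remark that the paper uses implicitly to guarantee $\Gamma_\mu(\mathcal{G})$ is a finite DAG.

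The gap is in the sentence ``Combined with acyclicity of the monomial reduction graph, every reduction chain terminates.'' This is the entire content of the reverse direction, and it is asserted rather than proved. Finiteness of the monomial pool plus acyclicity of each $\Gamma_\mu(\mathcal{G})$ does not by itself forbid an infinite chain of polynomial reductions: a one-step reduction of $f$ removes one monomial $v$ from $\supp(f)$ and inserts $v' = (v/u_{i,1})u_{i,2}$, which may combine with, cancel against, or recreate existing terms, so the support of $f$ does not simply march monotonically down a DAG. The paper closes this gap by constructing an explicit potential: it sets $\ell_{\max}(\mu)$ to be the length of the longest directed path from $\mu$ to the unique sink in $\Gamma_\mu(\mathcal{G})$ (well-defined precisely because the graph is a finite DAG), extends it to polynomials by $\ell_{\max}(f) = \sum_{\mu \in \supp(f)} \ell_{\max}(\mu)$, and then verifies $\ell_{\max}(f') < \ell_{\max}(f)$ for any one-step reduction $f \longrightarrow_\mathcal{G} f'$, treating the two cases $v' \in \supp(f)$ and $v' \notin \supp(f)$ separately and using $\ell_{\max}(v) > \ell_{\max}(v') \geq 0$. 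You flagged exactly this bridge as ``the main obstacle,'' but the proof proposal stops short of constructing the potential function that crosses it.

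Two smaller misdirections: first, only termination is needed to invoke Theorem~\ref{reductionrelationNoetherian}, so the discussion of local confluence lifting to polynomial-level confluence is a detour that the paper does not take; second, the hypothesis of the theorem is genuinely universally quantified over all monomials $\mu$, and the suggestion to restrict attention to $\lcm(u_{i,1},u_{j,1})$ from Buchberger's criterion does not reflect how the hypothesis is actually used (the $\ell_{\max}$ argument must be available for every monomial that can appear in a reduction chain, not only Buchberger lcm's).
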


\begin{proof}

If there exists a monomial $\mu \in S$ such that $\Gamma_{\mu} (\mathcal{G})$ has a cycle, then there is an infinite sequence of one step reductions with respect to $\mathcal{G}$ starting at $\mu$. Thus $\mathcal{G}$  fails to define a Noetherian reduction relation. It follows from  \Cref{reductionrelationNoetherian} that there is no monomial order $\tau$ such that $\mathcal{G}$ is a Gr\"obner basis with the marked monomials as the initial terms.

If there exists a monomial $\mu \in S$ such that $\Gamma_{\mu} (\mathcal{G})$ has more than one sink, then division of $\mu$ by $\mathcal{G}$ does not have a unique remainder since each sink is the remainder of $\mu$ when it is divided by $\mathcal{G}$. Thus,  $\mathcal{G}$ can not be a Gr\"obner basis with respect to a monomial order $\tau$ such that the marked monomials as the initial terms with respect to $\tau$. Therefore, it is necessary that  the directed graph has a unique sink and no cycles for the existence of a monomial order $\tau$ such that $\mathcal{G}$  is a Gr\"obner basis with leading terms as the marked monomials.

Suppose that $\Gamma_{\mu} (\mathcal{G})$ has a unique sink and no cycles for every monomial $\mu$ in $S$. Our goal is to prove that there exists a monomial order $\tau$ such that $\mathcal{G}$ is a Gr\"obner basis with the marked monomials as the initial terms. It suffices to show that $\mathcal{G}$ defines a Noetherian reduction relation by \Cref{reductionrelationNoetherian}.  For this purpose, we define an invariant called the longest path length, denoted by $\ell_{\max}(\mu)$, where \[\ell_{\max}(\mu)=\{\text{the longest distance from } \mu \text{ to the unique sink in } \Gamma_{\mu} (\mathcal{G}) \}.\]
 The longest path length $\ell_{\max}(\mu)$ is well-defined because there can be only a finite number of directed paths starting at $\mu$ and ending at the unique sink  in  $\Gamma_{\mu} (\mathcal{G})$.  Additionally,  the longest path length is a nonnegative integer, i.e., $\ell_{\max}(\mu)\geq 0$. We define a related invariant for a polynomial $f$ in $S$ and it is given by  
 $$\displaystyle \ell_{\max}(f):=\sum_{\mu \in \text{supp} (f) } \ell_{\max}(\mu).$$
If $f$ has no one step reductions with respect to $\mathcal{G}$, we can choose another polynomial in $S$ until that polynomial has a one step reduction. Without loss of generality, we may assume that there exists a polynomial $f'$ such that $f \longrightarrow_{\mathcal{G}} f'$.  Then there exists a monomial $v \in \supp(f)$  such that $u_{i,1}$ divides $v$ for some $1 \leq i \leq s$ and
$$f'= f - \frac{c_v v}{u_{i,1}}g_i = (f-c_vv)+c_v \frac{v}{u_{i,1}}u_{i,2}$$ 
  where $f = c_vv+h$ with $c_v \in \KK^*$ such that $v \notin \supp (h)$.
 
In order to prove $\mathcal{G}$ defines a Noetherian reduction relation, it suffices to show $\ell_{\max}(f')< \ell_{\max}(f)$.  Let $ \displaystyle v'= \frac{v}{u_{i,1}}u_{i,2}$. If  $ v' \in \supp (f)$, then
 $$\ell_{\max}(f')\leq \left(\sum_{\mu \in \supp (f)} \ell_{\max}(\mu) \right) -\ell_{\max} (v) <\sum_{\mu \in \supp (f)} \ell_{\max}(\mu)=\ell_{\max}(f).$$
The last inequality is due to the fact that $\ell_{\max}(v)>0$ because $v$ can not be the unique sink in $ \Gamma_v (\mathcal{G})$ as $ \displaystyle  v  \longrightarrow_{\mathcal{G}} v' $. If $v' \notin \supp (f)$, then 
	$$\ell_{\max}(f')=\displaystyle \left(\sum_{\mu \in \supp (f)} \ell_{\max}(\mu) \right) -\ell_{\max} (v)+\ell_{\max} (v')<\sum_{\mu \in \supp (f)} \ell_{\max}(\mu)=\ell_{\max}(f).$$
The strict inequality follows from the fact $ \ell_{\max}(v)>\ell_{\max} (v') $ which is proved as follows.  Since $   v  \longrightarrow_{\mathcal{G}} v' $, the directed graph of  $ v'$ with respect to $\mathcal{G}$ is a subgraph of $ \Gamma_v (\mathcal{G})$; moreover,   both directed graphs share the same sink. Thus, the existence of a directed edge from $v$ to $v'$ guarantees that  $ \ell_{\max}(v)>\ell_{\max} (v') $. 
\end{proof}
 
 \begin{remark} Let $\mathcal{G}$ be a Gr\"obner basis consisting exclusively of binomials with leading terms as the marked monomials. Then for any polynomial $f \in S$, there is always a unique $\tilde{f}$ obtained through a sequence of one step reductions with respect to $\mathcal{G}$ such that $\ell_{max}(\tilde{f})=0$.  It is also worth noting that  $\tilde{f}$ is  the remainder of $f$ when it is divided with respect to $\mathcal{G}$. 
 \end{remark}

\section{Strongly Stable Ideals and their multi-Rees Algebras}\label{sec:3}

In the paper, \cite{HHV}, Herzog, Hibi and Vladoiu determine the defining equations of the Rees algebra of a strongly stable ideal as the union of a Gr\"obner basis for the toric ideal of its special fiber and a set of first syzygies, proving that Rees algebras of strongly stable ideals are of fiber type. The main result of this section (\Cref{multireesisfibertype}) generalizes their result to the case of multi-Rees algebras.

\begin{notation}
	Let $I_1, \dots, I_r$ be a collection of strongly stable ideals in $S$ and $\{u_{i,1}, \dots,u_{i,s_i}\}$ be  the set of minimal monomial generators of $I_i$ for each $i$ such that $\deg (u_{i,j})=d_i$ for all $1 \leq j \leq s_i$ and 
 	$$u_{i,1} \succ_{rlex} u_{i,2} \succ_{rlex} \dots \succ_{rlex} u_{i,s_i}$$ with respect to reverse lexicographic order defined in \Cref{DefBorelFixed}.
 	Recall that the defining ideals of the multi-Rees algebra $\mathcal{R}(I_1 \oplus \dots \oplus I_r)$ and its special fiber  $\mathcal{F}(I_1 \oplus \dots \oplus I_r)$ are denoted  by $\mathcal{L} \subseteq R$ and $T(I_1 \oplus \dots \oplus I_r) \subseteq R'$, respectively, where   
 	$$R=S[T_{i,j} : 1\leq i\leq r, ~1\leq j \leq s_i] \text { and } R'= \KK[T_{i,j} : 1\leq i\leq r, ~1\leq j \leq s_i].$$
\end{notation}
In what follows, we present the main result of this section and establish its proof through two auxiliary lemmas. Explicit statements and proofs of these lemmas are provided in the following steps. 

\begin{theorem}\label{multireesisfibertype}
 	If $\mathcal{G}'$ is a Gr\"obner basis for the toric ideal $T(I_1 \oplus \dots \oplus I_r)$ with respect to a monomial order $\tau'$ on $R'$, then 
 	\[ \mathcal{G} :=  \left\{ \underline{x_iT_{l,k}}-x_jT_{l,k'} ~:~ i<j  \text{ and } x_iu_{l,k}=x_ju_{l,k'}  \right\} \cup \mathcal{G}' \] is a Gr\"obner basis for  the defining ideal of the multi-Rees algebra $\mathcal{R}(I_1 \oplus \cdots \oplus I_r)$ with respect to an extended monomial order $\tau$ on $R$ such that $\tau |_{R'}=\tau'$.
 \end{theorem}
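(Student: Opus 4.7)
My plan is to apply \Cref{fibergraphinduces}: after equipping $\mathcal{G}'$ with markings, it suffices to show that for every monomial $\mu \in R$ the directed graph $\Gamma_\mu(\mathcal{G}')$ is acyclic and has a unique sink. I mark each syzygy binomial $x_iT_{l,k} - x_jT_{l,k'}$ (with $i<j$) at $x_iT_{l,k}$, and I retain the given markings of $\mathcal{G}$. The containment $\mathcal{G}' \subseteq \mathcal{L}$ is immediate: a syzygy binomial lies in $\ker\varphi$ because $\varphi(x_iT_{l,k}) = x_iu_{l,k}t_l = x_ju_{l,k'}t_l = \varphi(x_jT_{l,k'})$, while $\mathcal{G} \subseteq T(I_1 \oplus \cdots \oplus I_r) \subseteq \mathcal{L}$. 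Strong stability of $I_l$ is used implicitly here: whenever $x_j \mid u_{l,k}$ and $i<j$, the monomial $u_{l,k'} := x_iu_{l,k}/x_j$ is again a Borel generator of $I_l$ (since $I_l$ is strongly stable and generated in the single degree $d_l$), so the binomial in question really is an element of $\mathcal{G}'$.

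For acyclicity I equip $R$ with the block monomial order in which the $x$-block dominates the $T$-block, the $x$-block is ordered lexicographically with $x_1 > x_2 > \cdots > x_n$, and the $T$-block is ordered by the monomial order $<_\tau$ for which $\mathcal{G}$ is a Gr\"obner basis. Under this order the marked term $x_iT_{l,k}$ dominates $x_jT_{l,k'}$ (since $i<j$ gives $x_i > x_j$), and the marked terms of elements of $\mathcal{G}$ agree with their $<_\tau$-leading terms; therefore every $\mathcal{G}'$-reduction strictly decreases the monomial. Because $\varphi$ is multi-homogeneous and is preserved by reduction, the vertex set of $\Gamma_\mu(\mathcal{G}')$ is contained in a single finite-dimensional multi-graded component of $R$, so strict decrease rules out cycles and forces every reduction path to terminate at a sink.

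The main obstacle is uniqueness of the sink. Suppose $\mu_1 = x^{\alpha_1}T^{\beta_1}$ and $\mu_2 = x^{\alpha_2}T^{\beta_2}$ are two sinks reachable from $\mu$. Preservation of $\varphi$ yields a common total $x$-exponent $\alpha'$ with $x^{\alpha'} = x^{\alpha_j}\prod_{l,k}u_{l,k}^{(\beta_j)_{l,k}}$ and a common $t$-multidegree $(c_1,\ldots,c_r)$ with $c_l = \sum_k (\beta_j)_{l,k}$, which in particular forces $|\alpha_1| = |\alpha_2|$. The sink condition with respect to the syzygy binomials says that for every pair $x_i \mid x^{\alpha_j}$, $T_{l,k} \mid T^{\beta_j}$, no index $j' > i$ satisfies $x_{j'} \mid u_{l,k}$; viewing $\alpha'$ as a multiset of $x$-indices, this forces the sub-multiset corresponding to $\alpha_j$ to consist of the $|\alpha_j|$ largest entries of $\alpha'$. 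That tail is unique, so $\alpha_1 = \alpha_2$ and consequently $\prod_{l,k}u_{l,k}^{(\beta_1)_{l,k}} = \prod_{l,k}u_{l,k}^{(\beta_2)_{l,k}}$. Finally, $T^{\beta_1}$ and $T^{\beta_2}$ are both $\mathcal{G}$-normal (they are sinks) and have identical $\varphi'$-images, so the Gr\"obner basis property of $\mathcal{G}$ for $T(I_1 \oplus \cdots \oplus I_r)$ yields $T^{\beta_1} = T^{\beta_2}$, whence $\mu_1 = \mu_2$. The delicate ``tail'' step genuinely uses strong stability: it is the full availability of the syzygy binomials that allows reductions to keep swapping any $x_i$ in the free part with a larger-indexed $x_{j'}$ appearing inside some Borel generator attached to the $T$-part, until the tail condition $\min\{i : x_i \mid x^{\alpha_j}\} \geq \max_{T_{l,k}\mid T^{\beta_j}}\max\{j' : x_{j'}\mid u_{l,k}\}$ is saturated.
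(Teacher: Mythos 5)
There is a genuine gap at the very start of your argument. \Cref{fibergraphinduces} (which rests on \Cref{reductionrelationNoetherian}) only yields that $\mathcal{G}'$, once shown to have acyclic directed graphs with unique sinks, is a Gr\"obner basis of the ideal $\langle \mathcal{G}'\rangle$ that it generates. The containment $\mathcal{G}' \subseteq \mathcal{L}$ gives $\langle \mathcal{G}'\rangle \subseteq \mathcal{L}$, but you must still show this is an equality, equivalently that $\ini(\mathcal{L}) \subseteq \langle \ini(\mathcal{G}')\rangle$, and your proposal never does so. You open by asserting that acyclicity and uniqueness of sinks ``suffice,'' which misreads the cited theorem; this is precisely the step the paper supplies separately in \Cref{lem:aux2}.

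Fortunately, the bridge is almost contained in what you wrote. Your uniqueness argument, the ``tail'' condition forcing $\alpha_1=\alpha_2$ followed by $\mathcal{G}$-normality and the Gr\"obner property of $\mathcal{G}$ forcing $T^{\beta_1}=T^{\beta_2}$, uses only that $\mu_1$ and $\mu_2$ lie in the same $\varphi$-fiber, not that both are reachable from a common $\mu$. That stronger statement, unique sink per fiber, does close the gap: $\mathcal{L}=\ker\varphi$ is a $\varphi$-multigraded binomial ideal, so $\ini(\mathcal{L})$ is generated by initial terms of binomials $\nu_1-\nu_2\in\mathcal{L}$; if $\nu_1 \succ \nu_2$ and $\nu_1\notin\langle\ini(\mathcal{G}')\rangle$, then $\nu_1$ is a sink of its fiber, yet $\nu_2$ reduces strictly decreasingly to the unique sink, a contradiction. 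State and use this explicitly. Apart from this gap your route is a genuine and arguably cleaner variant of the paper's: in place of the ad hoc decreasing pair $(o_{mu},\ell_{\max})$ of \Cref{lem:aux1}, you exhibit a block order with the $x$-block lex-dominant under which every marked term of $\mathcal{G}'$ is leading, so Noetherianity is immediate, and your tail computation is substantially the $o_{mu}=0$, $m=m'$ analysis embedded in \Cref{lem:aux2}. One small correction to your first paragraph: the containment $\mathcal{G}'\subseteq\mathcal{L}$ does not itself use strong stability (any pair with $x_iu_{l,k}=x_ju_{l,k'}$ lies in $\ker\varphi$); strong stability enters, as you correctly say later, in guaranteeing that a reduction is available whenever the tail condition fails.
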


\begin{proof}
In \Cref{lem:aux1}, we show that the collection of binomials  $\mathcal{G}$ defines a Noetherian reduction relation. It then follows from \Cref{reductionrelationNoetherian} that there exists a monomial term order $\tau $ such that $\mathcal{G}$ is a Gr\"obner basis for the ideal generated by $\mathcal{G}$ with respect to $\tau$. Finally, in \Cref{lem:aux2}, we prove that the initial ideal of $\langle   \mathcal{G} \rangle $ and the initial ideal of $\mathcal{L} $ with respect to $\tau$ are equal. Thus, the collection $\mathcal{G}$ is a Gr\"obner basis for  $\mathcal{L}$.
\end{proof}

  \begin{lemma}\label{lem:aux1}
 Let $\mathcal{G}'$ be a Gr\"obner basis for the toric ideal  $T(I_1 \oplus \dots \oplus I_r)$ with respect to a monomial order $\tau'$ in $R'$.  The collection of marked binomials
 $$ \mathcal{G} = \left\{ \underline{x_iT_{l,k}}-x_jT_{l,k'}~:~ i<j  \text{ and } x_iu_{l,k}=x_ju_{l,k'}  \right\} \cup \mathcal{G}'$$
 defines a Noetherian reduction relation in $R$.
 \end{lemma}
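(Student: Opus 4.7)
The plan is to prove Noetherianity directly by exhibiting a function on monomials that strictly decreases under the linear-syzygy reductions in $\mathcal{G}' \setminus \mathcal{G}$ and is invariant under the toric reductions in $\mathcal{G}$, then to invoke the Gr\"obner basis hypothesis on $\mathcal{G}$ for the tail of any reduction chain. Every monomial of $R$ factors uniquely as $\mu = x^{a} T^{b}$ with $a \in \NN^{n}$ and $b \in \NN^{s_{1}+\cdots+s_{r}}$, and the relevant invariant will be the $x$-multidegree $a$.

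First, I would verify that both families of reductions preserve the total $x$-degree $|a| = a_{1}+\cdots+a_{n}$: the elements of $\mathcal{G}$ lie in $R' = \KK[T_{i,j}]$ and do not touch the $x$-part at all, while a linear-syzygy reduction $\underline{x_{i} T_{l,k}} \longrightarrow x_{j} T_{l,k'}$ replaces one copy of $x_{i}$ by one copy of $x_{j}$. Consequently, during any reduction chain $a$ moves inside the finite set of tuples of fixed total degree. Next, I would equip $\NN^{n}$ with the lexicographic order induced by $x_{1} > x_{2} > \cdots > x_{n}$; a linear-syzygy reduction with $i < j$ decreases $a_{i}$ by one and increases $a_{j}$ by one, so the first coordinate at which the new multidegree differs from the old is $i$, where it has strictly dropped. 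Hence linear-syzygy reductions strictly decrease $a$ in lex, while toric reductions from $\mathcal{G}$ leave $a$ unchanged.

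Finally, consider any infinite reduction chain $\mu_{0} \longrightarrow_{\mathcal{G}'} \mu_{1} \longrightarrow_{\mathcal{G}'} \cdots$. The associated sequence of $x$-multidegrees is weakly decreasing in lex; since lex restricted to the finite set of multidegrees of a fixed total degree is a well-order, the $x$-multidegree must stabilize after finitely many steps. Beyond that point no further linear-syzygy reductions can appear, so the tail of the chain consists entirely of toric reductions applied inside a fixed coset $x^{a} \cdot \KK[T_{i,j}]$. Factoring out the constant $x^{a}$ produces an infinite reduction chain in $R'$ with respect to $\mathcal{G}$, which contradicts \Cref{reductionrelationNoetherian} applied to the Gr\"obner basis $\mathcal{G}$ of $T(I_{1} \oplus \cdots \oplus I_{r})$. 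Therefore every chain terminates and $\mathcal{G}'$ defines a Noetherian reduction relation. The only real subtlety is locating an invariant that cleanly separates the two types of reductions; once the lex order on $x$-multidegrees is chosen, the problem reduces immediately to the already-known Noetherianity of reductions by $\mathcal{G}$.
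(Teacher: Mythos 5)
Your argument is correct for monomials and parallels the paper's proof in structure, but uses a simpler invariant. The paper constructs $o_{mu}$, the ``iterated cumulative degree of the content of $u$ with respect to $m$'', a scalar defined by a double summation over the indices appearing in $m$ and the content exponents $\alpha_u$; you instead use the plain $x$-multidegree of $mu$, ordered lexicographically in $\NN^n$. Verifying that a linear-syzygy reduction $\underline{x_i T_{l,k}} \longrightarrow x_j T_{l,k'}$ with $i<j$ strictly decreases this vector is then a one-line observation (the first changed coordinate is the $i$-th and it drops), whereas the paper carries out a term-by-term comparison of summands. You also dispense with the second component of the paper's pair invariant, the longest path length $\ell_{\max}(u)$, arguing instead that once the $x$-multidegree stabilizes the tail would be an infinite $\mathcal{G}$-chain in $\KK[T_{i,j}]$, contradicting \Cref{reductionrelationNoetherian}. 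These are genuine, if modest, simplifications over the paper's presentation of the same underlying idea: separate the $x$-part from the $T$-part and show the $x$-part only moves ``rightward'' finitely often.

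What you leave out is the passage from monomials to polynomials, which is what the definition of a Noetherian reduction relation actually demands. Your chain $\mu_0 \longrightarrow_{\mathcal{G}'} \mu_1 \longrightarrow_{\mathcal{G}'} \cdots$ is explicitly a chain of monomials, and the step ``factoring out the constant $x^a$'' has no literal meaning for a polynomial whose terms carry different $x$-multidegrees. The paper closes this gap by defining the invariant of $f$ as the coordinate-wise sum over $\supp(f)$ and referring back to the argument in the proof of \Cref{fibergraphinduces}. Your invariant extends the same way --- set $\sigma(f):=\sum_{\mu\in\supp(f)} a_\mu$, note that lex on $\NN^n$ respects coordinate-wise addition and that cancellation only removes nonnegative summands, so $\sigma$ still strictly drops under linear-syzygy reductions and is non-increasing under $\mathcal{G}$-reductions; once $\sigma$ stabilizes, every remaining step is a $\mathcal{G}$-reduction confined to one of the finitely many $x^a$-homogeneous slices of $f$, each of which terminates by the Noetherianity of $\mathcal{G}$ --- but since the polynomial statement is exactly what the lemma asserts, you should make it explicit (or invoke termination of the multiset extension of a terminating reduction relation on monomials).
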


 \begin{proof} 
Let $u$  be a monomial in $R'$ which is presented by
$$u = \prod_{i=1}^r \prod_{j=1}^{c_i} T_{i,k_{i,j}}= \Big( T_{1,k_{1,1}} \cdots T_{1,k_{1,{c_1}}} \Big)  \cdots \Big( T_{r,k_{r,1}} \cdots T_{r,k_{r,{c_r}}} \Big)$$
 	where $1 \leq k_{i,1} \leq \cdots \leq  k_{i,{c_i}} \leq  s_i$	for all $1\leq i\leq r$.
We define the \emph{content} of $u$ as the monomial in $S$, denoted by $ \textbf{x}^{\alpha_u} $, and  given by
\[\textbf{x}^{\alpha_u}  := x_1^{\alpha_{u,1}} \cdots x_n^{\alpha_{u,n}} = \prod_{i=1}^r \prod_{j=1}^{c_i} u_{i,k_{i,j}}.\]
Let  $m$ be a monomial in $S$  expressed as $ \displaystyle m= \prod_{j=1}^{c}  x_{i_j} $ where $i_j \in \{1, \ldots, n\}$.  

 In the remainder of the proof, we use the following order for the elements of $\NN^2.$ Given  $\mathbf{a}, \mathbf{b} \in \mathbb{N}^2$, we say  $\mathbf{a} <_{lex,2} \mathbf{ b}$ if the first nonzero entry of $\mathbf{b-a}$ is positive.

In order to show  that $\mathcal{G}$ defines a  Noetherian reduction relation in $R$, we first prove that  it is true for all monomials in $R$. For this purpose, we introduce an invariant for a monomial  $mu$ in $R$, where $m$ and $u$ are given as above, by associating it to an ordered pair $(o_{mu},l_{mu})$ where 
$$\displaystyle o_{mu} = \sum_{q=1}^{c} \left( \sum_{t=i_q+1}^{n} \alpha_{u,t}\right) \text{ and } l_{mu}=\ell_{\max}(u)$$
such that $l_{mu}$ is the longest path length invariant defined in \Cref{fibergraphinduces} and $o_{mu}$ can be interpreted as the iterated cumulative degree of the content of $u$ with respect to $m$.  If $i_q=n$, then the sum is empty. Notice that the order of the factorization of $m$ does not make a difference for defining $o_{mu}$. Thus,  it suffices to show  the following claim to prove $\longrightarrow_{\mathcal{G}}$ is Noetherian for monomials in $R$.
$$\text{ If }  mu \longrightarrow_{\mathcal{G}}  m'u' \text{ then } (o_{m'u'},l_{m'u'}) <_{lex,2} (o_{mu},l_{mu}).$$  
If there exists a reduction relation $mu \longrightarrow_{\mathcal{G}}  m'u'$, it must be associated to an element of $\mathcal{G}'$ or $\mathcal{G} \setminus \mathcal{G}'$. We call this element a reduction element. If the reduction element is in $\mathcal{G}',$  it follows from \Cref{fibergraphinduces} that $\ell_{\max}(u')<\ell_{\max}(u)$. Since $m=m'$ and the contents of $u$ and $u'$ are the same, it is immediate that $o_{mu} = o_{m'u'}$. Thus, $(o_{m'u'},l_{m'u'}) <_{lex,2} (o_{mu},l_{mu})$, as desired.

 If the reduction element is in $\mathcal{G} \setminus \mathcal{G}'$, it is of the form $\underline{x_i T_{l,k}} - x_j T_{l,k'}$ where $i <j$ and $x_i u_{l,k} = x_j u_{l,k'}$. Then, there exists a variable in the support of $m$ such that  $x_{i_p} = x_i$ for some $p \in \{1, \ldots, c\}$ and
 $$\displaystyle m' =x_j \frac{m}{x_i}= \prod_{q=1}^{c}  x_{i'_q} \text{ and } u'=T_{l,k'}\frac{u}{T_{l,k}}$$
 where $x_{i'_p} = x_j$ and $x_{i'_q} = x_{i_q}$ for all the remaining $q \in \{  1, \ldots, c\}.$ Since $x_i u_{l,k} = x_j u_{l,k'}$, the contents of $u$ and $u'$ only differ at the variables $x_i$ and $x_j$. In particular, $ \alpha_{u,t} = \alpha_{u',t}$ for all $t \neq i,j$ while  $ \alpha_{u,i} = \alpha_{u',i}-1$ and $ \alpha_{u,j} = \alpha_{u',j} +1.$ By making use of these equalities and recalling that $i_p=i$, we obtain $o_{mu}> o_{m'u'}$ through the following term by term comparison.
 \begin{align*}
   o_{mu}   & =  \sum_{\substack{t=i+1}}^{n} \alpha_{u,t}+
     \sum_{\substack{q=1 \\ q \neq p \text{ and } i_q < i }}^{c} \left( \sum_{t=i_q+1}^{n} \alpha_{u,t}\right)+
      \sum_{\substack{q=1 \\ q \neq p \text{ and } i_q \geq i }}^{c} \left( \sum_{t=i_q+1}^{n} \alpha_{u,t}\right)\\
    & >  \sum_{\substack{t=j+1}}^{n} \alpha_{u,t}+
     \sum_{\substack{q=1 \\ q \neq p \text{ and } i_q < i }}^{c} \left( \sum_{t=i_q+1}^{n} \alpha_{u,t}\right)+
      \sum_{\substack{q=1 \\ q \neq p \text{ and } i_q \geq i }}^{c} \left( \sum_{t=i_q+1}^{n} \alpha_{u,t}\right).
 \end{align*}
On the other hand, we have 
\[   \sum_{\substack{q=1 \\ q \neq p \text{ and } i_q < i }}^{c} \left( \sum_{t=i_q+1}^{n} \alpha_{u,t}\right)=\sum_{\substack{q=1 \\ q \neq p \text{ and } i_q < i }}^{c} \left( \sum_{t=i'_q+1}^{n} \alpha_{u',t}\right).\] Also if $i_q+1>j$, then $\sum_{t=i_q+1}^{n} \alpha_{u,t}=\sum_{t=i_q'+1}^{n} \alpha_{u',t}$, and, if $i_q+1\leq j$, then $\sum_{t=i_q+1}^{n} \alpha_{u,t}=1+\sum_{t=i_q'+1}^{n} \alpha_{u',t}$. Then we get 
\[
\sum_{\substack{q=1 \\ q \neq p \text{ and } i_q \geq i }}^{c} \left( \sum_{t=i_q+1}^{n} \alpha_{u,t}\right)=k+\sum_{\substack{q=1 \\ q \neq p \text{ and } i_q \geq i }}^{c} \left( \sum_{t=i_q'+1}^{n} \alpha_{u',t}\right)
\text{ for some }k\geq 0.\]
Therefore, we obtain 
\begin{align*}
   o_{mu}   & >  \sum_{\substack{t=j+1}}^{n} \alpha_{u,t}+
     \sum_{\substack{q=1 \\ q \neq p \text{ and } i_q < i }}^{c} \left( \sum_{t=i_q'+1}^{n} \alpha_{u',t}\right)+
      \sum_{\substack{q=1 \\ q \neq p \text{ and } i_q \geq i }}^{c} \left( \sum_{t=i_q'+1}^{n} \alpha_{u',t}\right)+k \geq o_{m',u'}.
 \end{align*}
 
Hence, $(o_{m'u'},l_{m'u'}) <_{lex,2} (o_{mu},l_{mu})$ which implies that $\mathcal{G}'$ defines a  Noetherian reduction relation for all monomials in $R$.

As in the proof of \Cref{fibergraphinduces}, for a polynomial $f$ in $R,$ one can define an ordered pair  $(o_f, l_f)$ as the sum of all the ordered pairs of the monomials in the support of $f$. By repeating the arguments employed in the proof of \Cref{fibergraphinduces}, it can be shown that $ \mathcal{G}$ defines a  Noetherian reduction relation in $R$.
 \end{proof}

  \begin{lemma}\label{lem:aux2}
  Let $\mathcal{G}$ be the collection given in \Cref{lem:aux1}. Then there exists a term order $\tau$ such that $\langle \ini_{\tau } ( \mathcal{G}) \rangle = \langle \ini_{\tau }  (\mathcal{L}) \rangle$. 	
 \end{lemma}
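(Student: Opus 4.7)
The strategy is threefold: first, use the Noetherian reduction from \Cref{lem:aux1} to produce a term order $\tau'$ under which $\mathcal{G}'$ is a Gröbner basis of $\langle \mathcal{G}' \rangle$ with its marked monomials as initial terms; second, note the easy inclusion $\langle \ini_{\tau'}(\mathcal{G}') \rangle \subseteq \langle \ini_{\tau'}(\mathcal{L}) \rangle$ since $\mathcal{G}' \subseteq \mathcal{L}$; and third, establish the reverse inclusion by showing that the $\mathcal{G}'$-normal form of any monomial in $R$ depends only on its image under $\varphi$. The first two steps are immediate: \Cref{reductionrelationNoetherian} produces $\tau'$, every binomial $x_iT_{l,k}-x_jT_{l,k'}$ in $\mathcal{G}'\setminus\mathcal{G}$ lies in $\mathcal{L}$ by the defining relation $x_iu_{l,k}=x_ju_{l,k'}$, and $\mathcal{G}\subseteq T(I_1\oplus\cdots\oplus I_r)\subseteq \mathcal{L}$.

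For the reverse inclusion, it suffices to prove that every $f\in \mathcal{L}$ reduces to zero under $\mathcal{G}'$. Since $\varphi$ is a monomial map, $\mathcal{L}$ is a binomial ideal spanned over $\KK$ by differences $mu - m'u'$ with $\varphi(mu)=\varphi(m'u')$. Because every one-step reduction of a monomial by a binomial in $\mathcal{G}'$ yields another monomial, any such difference reduces to $N(mu)-N(m'u')$, where $N(w)$ denotes the unique monomial $\mathcal{G}'$-normal form of $w$. Thus everything hinges on the claim that $\varphi(mu)=\varphi(m'u')$ implies $N(mu)=N(m'u')$.

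To prove this claim, write $\varphi(mu) = M\cdot t_1^{a_1}\cdots t_r^{a_r}$ with $M = m\cdot \mathbf{x}^{\alpha_u} = x_1^{B_1}\cdots x_n^{B_n}$, and set $(m^*,u^*):=N(mu)$, $i^*:=i_{\min}(m^*)$, and $D:=\deg m^*$. The absence of $\mathcal{G}'\setminus\mathcal{G}$ reductions forces every generator $u_{l,k}$ with $T_{l,k}\mid u^*$ to involve only $x_1,\ldots,x_{i^*}$; combined with $m^*\in \KK[x_{i^*},\ldots,x_n]$, this gives $m^* = x_{i^*}^{a^*}\, x_{i^*+1}^{B_{i^*+1}}\cdots x_n^{B_n}$ with $a^* = D - \sum_{j>i^*}B_j$. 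The inequalities $1\leq a^*\leq B_{i^*}$ then single out $i^*$ as the smallest index for which $B_i+B_{i+1}+\cdots+B_n\geq D$, so $m^*$ is a function of $M$ and the multidegree alone. Two monomials with identical $\varphi$-image therefore yield identical $m^*$ and hence identical content $\mathbf{x}^{\alpha_{u^*}}$. Matching multidegrees then place $u^* - (u')^*\in T(I_1\oplus\cdots\oplus I_r) = \langle \mathcal{G}\rangle$, and normal form with respect to the Gröbner basis $\mathcal{G}$ forces $u^* = (u')^*$.

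I expect the main obstacle to be the rigidity argument identifying $m^*$ as the top-indexed divisor of $M$ of prescribed degree. It requires unpacking the $\mathcal{G}'\setminus\mathcal{G}$ normal-form condition from both ends --- small-indexed variables must have been pushed into the $T_{l,k}$'s and large-indexed variables pulled out --- so as to rule out every alternative splitting of $M$ between $m^*$ and $\mathbf{x}^{\alpha_{u^*}}$. Once this rigidity is in place, the Gröbner basis $\mathcal{G}$ of the toric ideal carries equal contents to equal $T$-monomials essentially for free.
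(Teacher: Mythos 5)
Your proposal is correct and shares its mathematical core with the paper's proof: both hinge on the observation that for a $\mathcal{G}'$-normal monomial $m^*u^*$, vanishing of the $o$-invariant forces the content of $u^*$ into $\KK[x_1,\ldots,x_{i^*}]$ while $m^*\in\KK[x_{i^*},\ldots,x_n]$, making the factorization of the $\varphi$-image into $m^*\cdot\mathbf{x}^{\alpha_{u^*}}$ rigid. You package this as the standalone claim that the normal-form map $N$ factors through $\varphi$ and conclude directly that every binomial of $\mathcal{L}$ reduces to zero; the paper frames the same computation as a proof by contradiction, reducing a hypothetical offender $h$ to $h'=mu-m'u'$ with both monomials $\mathcal{G}'$-normal, and then using the $c=\max\{k:\alpha_{u,k}\neq 0\}$ bookkeeping to deduce $m=m'$ and equal contents, hence $h'=m(u-u')$ with $u-u'\in\langle\mathcal{G}\rangle$. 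Your explicit characterization $m^*=x_{i^*}^{a^*}\prod_{j>i^*}x_j^{B_j}$ isolates the rigidity more cleanly, but the substance is the same. One small slip: the inequalities $1\leq a^*\leq B_{i^*}$ identify $i^*$ as the \emph{largest} index with $B_i+\cdots+B_n\geq D$ (equivalently, the smallest index with $B_{i+1}+\cdots+B_n<D$), not the smallest index satisfying $B_i+\cdots+B_n\geq D$, which would always be $1$. This is harmless since $i^*$ is nonetheless determined by the multidegree, which is all the argument needs.
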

\begin{proof} 
Recall from  \Cref{lem:aux1} that  $\mathcal{G}$ defines a Noetherian reduction relation. Then, by \Cref{reductionrelationNoetherian}, there exists a term order  $\tau$ such that $\mathcal{G}$ is a Gr\"obner bases with respect to  $\tau$ where the leading terms are the marked monomials of $\mathcal{G}$.  It is clear that $\langle \mathcal{G} \rangle \subseteq \mathcal{L}$. Our goal is to show 
    $  \langle \text{in}_{\tau }  (\mathcal{L} ) \rangle \subseteq \langle \text{in}_{\tau } ( \mathcal{G} ) \rangle$.

Let $u, m, o_{mu}$ and $\ell_{mu}$  be given as in the proof of \Cref{lem:aux1}. In what follows, we observe that if $mu$ does not belong to $\langle \text{in}_{\tau } (\mathcal{G}) \rangle $, then $o_{mu}=0=l_{mu}$. Suppose  $o_{mu}>0$. It follows from the definition of $o_{mu}$ that there exists a variable $x_i$ dividing $m$ such that $\alpha_{u,j}>0$ for some $j \geq i+1$; moreover, $x_j$ divides $u_{l,k} $ for some $l,k$ and $T_{lk}$ divides $u$ where $l\in [r] $ and $ k \in \{1, \ldots, s_l\}$. Since $u_{l,k} \in I_l$ and $I_l$ is a strongly stable ideal, there exists $k'>k$ such that $ x_i u_{l,k}=x_j u_{l,k'}$. It implies that $\underline{x_iT_{l,k}} - x_jT_{l,k'} \in \mathcal{G}'$ while $x_iT_{lk}$ divides $mu$. Thus $mu \in \langle \text{in}_{\tau } ( \mathcal{G}) \rangle$.

If $l_{mu} >0,$ then $u$ can not be the sink of the directed graph $\Gamma_u(\mathcal{G})$ which implies that there is an element $g=\underline{g_1}-g_2$ in $ \mathcal{G}$ such that $g_1= \ini_{\tau} (g)$ divides $u$. Hence, $mu$ is divisible by $\ini_{\tau} (g)$, proving that $mu \in \langle \text{in}_{\tau } ( \mathcal{G} ) \rangle$. This completes the proof of the observation.

Proceed by contradiction and assume that there is a binomial generator  of $\mathcal{L}$, say $h$, such that $\ini_{\tau} (h) \notin \langle \text{in}_{\tau } ( \mathcal{G} ) \rangle$. Let $h'$ be the unique remainder of $h$ when it is divided by $\mathcal{G}$ and let $h'=mu -m'u'$ where
$$m= \prod_{q=1}^{c_0} x_{i_q}, ~~ m'= \prod_{q=1}^{b_0} x_{j_q},~~ u =\prod_{i=1}^r \prod_{j=1}^{c_i} T_{i,k_j}, ~~u' =\prod_{i=1}^r \prod_{j=1}^{b_i} T_{i,k'_j}$$
such that the monomials $mu, m'u'$ do not belong to $\langle \text{in}_{\tau } ( \mathcal{G}) \rangle$. Since $h' \in \mathcal{L}=\ker (\varphi),$ we have the following equality which in turn implies  that $b_i=c_i$ for all $i \in [r]$.
\begin{align}\label{eq:bigprod}
    \left( \prod_{q=1}^{c_0} x_{i_q} \right) \underbrace{\left( x_1^{\alpha_{u,1}} \cdots x_n^{\alpha_{u,n}} \right)}_{\text{content of } u} \left(t_1^{c_1} \cdots t_r^{c_r} \right)  & = \left( \prod_{q=1}^{b_0} x_{j_q} \right) \underbrace{\left( x_1^{\alpha_{u',1}} \cdots x_n^{\alpha_{u',n}}\right)}_{\text{content of } u'}  \left( t_1^{b_1} \cdots t_r^{b_r}\right) 
 \end{align}

 Since $\deg (u_{i,j})=d_i$ for all $j \in \{1, \ldots, s_i\}$, one can conclude that total degrees of contents of both $u$ and $u'$ are equal; moreover, $$\sum_{i=1}^n \alpha_{u,i} = \sum_{i=1}^n \alpha_{u',i} = \sum_{i=1}^r c_i d_i,$$
implying $b_0=c_0$. Let $c= \max \{ k ~:~ \alpha_{u,k} \neq 0\}$ and $d= \max \{ k ~:~ \alpha_{u',k} \neq 0\}$. It follows from the observation that $o_{mu}=0=o_{m'u'}$. Then one can obtain the refined expressions of $m$ and $m'$ given below.
$$m= \prod_{i=c}^n x_i^{\gamma_i}, ~~~~~ m' = \prod_{i=d}^n x_i^{\delta_i} $$
As a result, \Cref{eq:bigprod} can be rewritten as
\begin{equation}\label{eq:simplified}
  \prod_{i=1}^cx_i^{\alpha_{u,i}} \prod_{i=c}^n x_i^{\gamma_i} = \prod_{i=1}^d x_i^{\alpha_{u',i}} \prod_{i=d}^n x_i^{\delta_i}  
\end{equation}

We may assume  $c\leq d$. Then \Cref{eq:simplified} implies that  $\alpha_{u,i}=\alpha_{u',i}$  when $i<c$ and $\gamma_i = \delta_i$ when $d<i$. Note that $\displaystyle \sum_{c\leq i\leq d} \gamma_i =\delta_d$ since $\deg (m)=\deg (m')$ and $\gamma_i = \delta_i$ when $d<i$. 

One can further show that $c=d$. Otherwise,  \Cref{eq:simplified} results with  the following set of equalities.
$$ \gamma_c  = \alpha_{u',c}- \alpha_{u,c},  ~~  \gamma_i = \alpha_{u',i} \text{ for } c< i<d,
~~    \gamma_d = \alpha_{u',d}+\delta_d$$
Subtituting the above equalities in $\displaystyle \sum_{c\leq i\leq d} \gamma_i =\delta_d$ yields to 
$$\alpha_{u,c} -  \alpha_{u',c}= \sum_{c< i \leq d}  \alpha_{u',i} = - \gamma_c \leq 0,$$
a contradiction because $\alpha_{u',d} >0$ by definition.  Since $c=d$, one must have $\gamma_c=\delta_c$. In addition, we also have $\alpha_{u,c} =\alpha_{u',c} $ from \Cref{eq:simplified}. Thus $\gamma_i=\delta_i$ and $\alpha_{u,i}=\alpha_{u',i}$ for all $1\leq i\leq c$. 

As a result, we conclude $m=m'$ and contents of $u$ and $u'$ are equal.  Hence, $h' = m(u-u')$ where $u-u' \in \ker (\varphi ')= \langle \mathcal{G} \rangle $. Then $\ini_{\tau} (h')= m\overline{u}$ where $\overline{u} $ is in the ideal generated by the leading terms of $\mathcal{G}$ which is a subset of $\langle \ini_{\tau } ( \mathcal{G}) \rangle $. Thus    $\ini_{\tau} (h')= m\overline{u} \in \langle \ini_{\tau } (\mathcal{G}) \rangle,$ a contradiction because none of the terms of $h'$ belongs to $\langle \ini_{\tau } (\mathcal{G}) \rangle$. Therefore, $\langle \ini_{\tau } ( \mathcal{G}) \rangle = \langle \ini_{\tau }  (\mathcal{L}) \rangle$.
\end{proof}

\section{Koszulness of the Multi-Rees Algebras of Strongly Stable Ideals}\label{sec:4}

In this section, we provide a collection of examples of strongly stable ideals whose multi-Rees algebras are not necessarily Koszul. One can systematically study the multi-Rees algebras of strongly stable ideals $I_1, \ldots, I_r$  by considering the following parameters.
\begin{itemize}
    \item $r:$ the number of ideals,
    \item $g_i:$   the number of Borel generators of $I_i$ where $I_i = \mathcal{B} (m_{i,1}, \ldots, m_{i,g_i})$,
    \item $d_i:$ the degree of Borel generators of $I_i$ where $\deg (m_{i,j}) = d_i$ for $1\leq j \leq g_i$.
\end{itemize}
 
Our examples allow us to identify the possible sets of values for the above parameters, if the multi-Rees algebra of any collection of strongly stable ideals with these parameters is always Koszul.

 If the first parameter value is set equal to one ($r=1$), we are in the world of  Rees algebras of strongly stable ideals. A closer look into the literature on these objects support our systematic approach. In particular, in \cite{D}, De Negri proves that the special fiber ring $\mathcal{F}(I)$ of the Rees algebra $\mathcal{R}(I)$ is Koszul when $I$ is a principal strongly stable ideal ($g_1=1$). In this case, one can further conclude that $\mathcal{R}(I)$ is Koszul thanks to \cite[Theorem 5.1]{HHV}, a result of Herzog, Hibi and Vladoiu stating that the Rees algebras of strongly stable ideals are of fiber type. In \cite[Example 1.3]{BC}, Bruns and Conca present an example of a strongly stable ideal with three Borel generators ($g_1=3$) such that the toric ideal has a minimal cubic generator. Thus, the special fiber ring or the Rees algebra are not necessarily Koszul when $g_1\ge 3$. Motivated by this example, authors of \cite{dipasquale2019rees} studies the Rees algebra of a strongly stable ideal $I$ with two Borel generators ($g_1=2$). Furthermore, they complete the general study on Koszulness of the Rees algebras of strongly stable ideals by proving  $\mathcal{R} (I)$ is always Koszul when $g_1=2$. 
 
 In the multi-Rees setting, it is natural to consider the classes of strongly stable ideals whose Rees algebras are always Koszul, in other words, $g_i\in \{1,2\}$ for each $1\leq i\leq r$. In fact,  authors of \cite{dipasquale2019rees}  conclude their paper with the question asking whether the multi-Rees algebras of strongly stable ideals $ I_1, \ldots, I_r$ is Koszul when $1\leq g_i\leq 2$ for each $i$. In a recent paper \cite{dipasquale2020koszul}, it is proved that the multi-Rees algebra of principal strongly stable ideals is Koszul, answering the question for the case $g_i=1$ for each $i.$  Note that there are no restrictions on the last set of parameters $d_1, \ldots, d_r$.

In what follows, we provide several examples to illustrate some conditions on $r, g_i, d_i$ that diminish the possibility of a multi-Rees algebra of  strongly stable ideals to be Koszul. As a natural consequence of these examples, one can collect the possible conditions on $r,g_i,d_i$ allowing Koszulness.

\begin{example}\label{boundnumberideals}
 Consider the following strongly stable ideals with two Borel generators
    $$I_1 = \MB (x_3^2x_6^a,x_1x_5x_6^a), ~~I_2= \MB(x_3^2x_6^b,x_2x_4x_6^b),~~ I_3=\MB(x_2x_4x_6^c,x_1x_5x_6^c),$$
    where $a,b,c$ non-negative integers. Then the  syzygy   \[(x_1x_5x_6^at_1)(x_3^2x_6^bt_2)(x_2x_4x_6^ct_3)=(x_3^2x_6^at_1) (x_2x_4x_6^bt_2) (x_1x_5x_6^ct_3)\] 
    is minimal and it corresponds to a cubic minimal generator of the  toric ideal $T(I_1\oplus I_2 \oplus I_3)$.
    
This example shows that the multi-Rees algebra of three or more strongly stable ideals (none of which are principal) is not necessarily Koszul ($r\geq 3$, $g_i \geq 2$, and $d_i\geq 2$ for each $1\leq i \leq r$). 

\end{example}

\begin{example}\label{boundondegree}
Consider the following strongly stable ideals with two  Borel generators 
    $$I_1 = \MB (x_1^2x_3^2x_4^a,x_1x_2^2x_3x_4^a), ~~I_2= \MB(x_1^2x_3^2x_4^b,x_2^4x_4^b)$$
    where $a,b$ non-negative integers. Then the syzygy
  \[(x_1^2x_3^2x_4^at_1)^2(x_2^4x_4^bt_2)=(x_1x_2^2x_3x_4^at_1)^2(x_1^2x_3^2x_4^bt_2)\]   
 is minimal and it corresponds to a cubic minimal generator of the  toric ideal $T(I_1\oplus I_2)$.

This example shows that the multi-Rees algebra of two strongly stable ideals with two Borel generators of degree four or higher is not necessarily Koszul ($r=2$, $g_1=g_2= 2,$ and $d_1,d_2 \geq 4$). 
\end{example}

\begin{example}\label{secondboundondegree}
Consider the following strongly stable ideals with two  Borel generators
    $$I_1 = \MB  (x_1x_3,x_2^2),~~I_2= \MB (x_1^2x_3^2x_4^a,x_2^4x_4^a)$$
where $a$ is a non-negative integer. Then the syzygy  $$(x_1x_3t_1)^2(x_2^4x_4^at_2)=(x_2^2t_1)^2(x_1^2x_3^2x_4^at_2)$$ 
 is minimal and it corresponds to a cubic minimal generator of the  toric ideal $T(I_1\oplus I_2)$.

This example shows that the multi-Rees algebra of two strongly stable ideals with two Borel generators such that exactly one of the ideals is generated in degree four or higher is not necessarily Koszul ($r=2$, $g_1=g_2= 2,$ and $d_1=2,d_2 \geq 4$).
\end{example}

\begin{proposition}\label{prop:KoszulCases}
Consider two collections of natural numbers $1 \leq g_1 \leq g_2 \leq \dots \leq g_r$ and $d_1, \dots, d_r$ satisfying $1 \leq d_i\leq d_j$ whenever both $i<j$ and $g_i=g_j$.

If the multi-Rees algebra of $I_1, \dots, I_r$ is Koszul for any collection of strongly stable ideals $I_1, \dots, I_r$ where each $I_i$ possesses exactly $g_i$ Borel generators of degree $d_i$, then one of the following must be true.
\begin{enumerate}
    \item $r=2$, $g_1=g_2=2$ and $2 \leq d_1 \leq d_2 \leq 3$
    \item $r>2$, $g_1=g_2=\dots=g_{r-2}=1$, $g_{r-1}=g_{r}=2$ and  $2 \leq d_{r-1} \leq d_r \leq 3$
    \item $r\geq 2$, $g_1=g_2=\dots=g_{r-1}=1$ and $g_r\leq 2$
\end{enumerate}
\end{proposition}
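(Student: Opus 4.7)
The plan is to prove the contrapositive: if the configuration $(r; g_1, \ldots, g_r; d_1, \ldots, d_r)$ satisfies none of (a), (b), (c), then there exist strongly stable ideals $I_1, \ldots, I_r$ realizing these parameters whose multi-Rees algebra $\mathcal{R}(I_1 \oplus \cdots \oplus I_r)$ is not Koszul. The engine throughout is \Cref{multireesisfibertype}: because the multi-Rees algebra of strongly stable ideals is of fiber type, any minimal generator of degree at least three in the toric ideal $T(I_1 \oplus \cdots \oplus I_r)$ lifts to a minimal non-quadratic generator of the defining ideal $\mathcal{L}$, which forces $\mathcal{R}(I_1 \oplus \cdots \oplus I_r)$ to fail to be quadratic and hence to fail to be Koszul.

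Failing (a), (b), and (c) simultaneously forces one of three structural deficiencies: (i) $g_r \geq 3$; (ii) all $g_i \leq 2$ but with at least three of the $g_i$ equal to $2$; or (iii) exactly two of the $g_i$ equal to $2$ and the larger of their corresponding degrees is at least $4$ (so $d_r \geq 4$, by the ordering convention). In case (i), we place a Bruns--Conca ideal from \cite[Example 1.3]{BC} at slot $r$, rescaled by a power of $x_n$ to reach degree $d_r$ and augmented with extra Borel generators ``high'' in the Borel order to reach $g_r$ generators if needed, and take the remaining $I_j$ to be principal strongly stable ideals of the prescribed degrees. In case (ii), relabeling so the non-principal ideals are $I_{r-k+1}, \ldots, I_r$, we apply \Cref{boundnumberideals} to the three slots $I_{r-2}, I_{r-1}, I_r$ with exponents $a = d_{r-2}-2$, $b = d_{r-1}-2$, $c = d_r-2$ (nonnegative since $d_i \geq 2$ whenever $g_i = 2$), and fill the other slots arbitrarily. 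In case (iii) the non-principal ideals are $I_{r-1}$ and $I_r$; generalize \Cref{boundondegree} and \Cref{secondboundondegree} by taking
\[ I_{r-1} = \MB\!\bigl(x_1 x_3\, x_4^{d_{r-1}-2},\; x_2^2\, x_4^{d_{r-1}-2}\bigr), \qquad I_r = \MB\!\bigl(x_1^2 x_3^2\, x_4^{d_r - 4},\; x_2^4\, x_4^{d_r - 4}\bigr), \]
with the other $I_j$ (if any) principal of the prescribed degrees; then the identity
\[ \bigl(x_1 x_3\, x_4^{d_{r-1}-2}\bigr)^2 \bigl(x_2^4\, x_4^{d_r-4}\bigr) \;=\; \bigl(x_2^2\, x_4^{d_{r-1}-2}\bigr)^2 \bigl(x_1^2 x_3^2\, x_4^{d_r-4}\bigr) \]
yields the desired cubic syzygy.

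The main obstacle is verifying that each constructed cubic relation remains \emph{minimal} after the other ideals are adjoined. This is handled by a multi-grading argument: the cubic relation lives in a multi-degree supported on at most three of the coordinates of $\mathbb{Z}^r$ (namely the indices of the three non-principal ideals used to build it), so any hypothetical decomposition into quadratic relations must respect this multi-grading, and hence would already exist in the sub-toric-ring generated by just those three ideals, contradicting the minimality established in \Cref{boundnumberideals}, \Cref{boundondegree}, or \Cref{secondboundondegree}. The most delicate step is case (i) when $g_r \geq 4$: one must choose the additional Borel generators appended to the Bruns--Conca ideal to be Borel-incomparable and far from the three original generators, so that no new quadratic relation can bridge a decomposition of the original cubic. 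Once this is in place, assembling the cases exhausts every configuration outside (a), (b), (c) and completes the proof.
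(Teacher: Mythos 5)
Your overall strategy---exhibit, for each parameter profile outside (a), (b), and (c), a collection of strongly stable ideals with that profile whose multi-Rees algebra is not Koszul, by producing a minimal cubic in the toric ideal and invoking \Cref{multireesisfibertype} to promote it to a minimal cubic in the defining ideal $\mathcal{L}$---is the same as the paper's, whose proof is a one-line citation of Examples \ref{boundnumberideals}, \ref{boundondegree}, and \ref{secondboundondegree}. You add a detail the paper leaves implicit but that is genuinely needed: the $\mathbb{Z}^r$-multigrading argument showing that a cubic whose $\mathbb{Z}^r$-degree is supported on at most three slots can only be decomposed using quadratics and monomial multipliers supported on those same slots, so minimality in the relevant sub-toric ideal transfers to the full one. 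This is the right tool and is what makes the examples actually compose with the remaining ``filler'' ideals.

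The genuine gap is your case (i), $g_r\geq 3$. You propose to take the Bruns--Conca ideal, rescale by a power of $x_n$ to reach degree $d_r$, and augment with extra Borel generators to reach $g_r$; you flag the augmentation as the delicate step but offer no argument for either operation. Neither is benign for strongly stable ideals. Replacing a Borel generator $m$ by $mx_n^k$ does not simply scale the minimal monomial generating set of $\mathcal{B}(m)$ by $x_n^k$: the factor $x_n^k$ admits its own one-step reductions, producing many new minimal generators, new $T$-variables, and new quadratic toric relations, so there is no reason the original cubic survives as a \emph{minimal} generator of $T(\mathcal{B}(mx_n^k))$---and if $d_r$ is smaller than the Bruns--Conca degree the rescaling is not even available. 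Adjoining further Borel generators is worse: it adds more variables and relations with no control, and choosing the extra generators to be ``Borel-incomparable and far away'' is a heuristic, not an argument that the cubic cannot decompose through the new quadratics. To be fair, the paper's proof has the same hole---the three cited examples all have $g_i=2$ throughout, so $g_r\geq 3$ is handled only by the informal remark preceding the proposition about \cite[Example~1.3]{BC}, which supplies a single $(g,d)$ pair. What is actually needed, and missing from both, is a family of strongly stable ideals with prescribed $(g_r,d_r)$, $g_r\geq 3$, whose special fiber has a minimal cubic; once that exists the multigrading argument you already have does the rest. Two smaller issues: in case (i) you take the remaining $I_j$ to be principal, which contradicts the profile when some $g_j=2$ for $j<r$ (easily fixed by using any ideal with the prescribed parameters), and in case (iii) your scaled variant of Example \ref{secondboundondegree} covering $d_{r-1}=3$ produces the correct cubic relation but you should verify rather than assert its minimality, since that degree is not literally covered by Example \ref{boundondegree} or \ref{secondboundondegree}.
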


\begin{proof}
Combination of the observations from Examples \ref{boundnumberideals}, \ref{boundondegree}, and \ref{secondboundondegree} leads to the conclusion.
\end{proof}

In the remainder of the paper, we obtain a partial converse of \Cref{prop:KoszulCases} concerning a subcase of (1): $r=2$, $g_1=g_2=2$ and $d_1=d_2=2$. In particular, we show  that the multi-Rees algebra of two strongly stable ideals with two quadratic Borel generators is Koszul.

\section{Strongly Stable Ideals and Their Fiber Graphs}\label{sec:5}

In this section, we focus on strongly stable ideals with two quadratic Borel generators. In particular, we investigate the toric ideal of the special fiber $\mathcal{F}(I)$ associated to the ideal $I=\MB(M,N)$ where $M=x_ax_b$ and  $N=x_cx_d$ such that $c<a \leq b <d$.  Our main objective in this section is to study \emph{fiber graphs} of these ideals to obtain a quadratic Gr\"obner basis for their toric ideals with respect to two different monomial orders. Results of this section forms the foundation of the next section in which we investigate the Koszulness of the multi-Rees algebras of strongly stable ideals with two quadratic Borel generators via fiber graphs.

In \Cref{sec:directedGraph}, we provided a combinatorial method to investigate whether a collection of marked binomials $\mathcal{G}=\{g_1, \dots, g_s\}$ form a Gr\"obner basis. More specifically, we introduced the notion of directed graph of a monomial with respect to  $\mathcal{G}$ and showed that   $\mathcal{G}$ is a Gr\"obner basis if and only if the directed graph of any monomial has a unique sink and has no cycles.  One could further investigate whether  $\mathcal{G}$  is a Gr\"obner basis of a particular ideal through related combinatorial objects called fiber graphs.  The underlying idea of fiber graphs is first introduced in \cite{S} and the notion of fiber graphs is used in \cite{blasiak2008toric} to study Gr\"obner basis of toric ideals of graphic matroids.  These objects are  further developed in \cite{dipasquale2019rees, dipasquale2020koszul, schweig2011toric} to study Gr\"obner bases of different classes of toric ideals.

\begin{definition}\label{multidegree}
Let $\{u_i: i \in \mathcal{P}\}\subseteq S$ be a finite collection of monomials of the same degree in the polynomial ring $S=\KK[x_1,\dots, x_n]$.  Let $I$ be the ideal generated by the given collection. Consider the toric map $$\begin{array}{rcl}  \phi_I: \KK[T_{u_i}:i \in \mathcal{P}] & \rightarrow & \KK[u_i:i \in \mathcal{P}] \\  T_{u_i} & \rightarrow & u_i  \end{array}$$ 

and extended algebraically. The kernel of the toric map $\phi_I$ is the toric ideal of $I$ which we denote by $T(I)$. Note that the ring $R= \KK[T_
{u_i}:i \in \mathcal{P}]$ inherits the multigrading from $S$ where $R= \bigoplus_{\mu} R_{\mu}$ with $\mu$ ranging over monomials of $S$ and $R_{\mu}$ being the $\KK$-vectors space described as
$$R_{\mu} = \spa_{\KK} \{T= \prod T_{u_i}^{a_{u_i}} \in R ~:~ \phi_I (T)= \mu\}.$$
For the remainder of the paper, we abuse notation by referring to monomials of $S$ as multidegrees. 
\end{definition}

In what follows, we recall the  fiber graph construction from \cite{blasiak2008toric, schweig2011toric} (see also \cite{dipasquale2020koszul, dipasquale2019rees}). These objects are defined in terms of fibers of a multidegree under the toric map and a collection of marked binomials in $R$.

\begin{definition}\label{fibergraphmulti}
Let $\mu \in S$ be a multidegree and  $\mathcal{G}$ be a collection of marked binomials in $R$, such that the marked monomial is the initial term of the binomial with respect to a given term order $>$.  The \emph{fiber graph of $I$ at $\mu$ with respect to $\mathcal{G}$}, denoted by $\Gamma_{\mu}(I)$, is defined as follows: the vertices of $\Gamma_{\mu}(I)$ correspond to monomials in $R_{\mu}$, i.e., a monomial $T$ is a vertex if and only if $\phi_{I}(T)=\mu$. For each pair of vertices  $T$ and $T'$, there is a directed  edge from $T$ to $T'$ whenever $T \rightarrow_{\mathcal{G}} T'$. For the sake of simplicity, we suppress the collection of marked binomials   $\mathcal{G}$ in the notation for $\Gamma_{\mu}(I)$ as our choice of $\mathcal{G}$ can be understood from context.
\end{definition}

\begin{remark}
It follows from the construction of fiber graphs  that for each $T \rightarrow_{\mathcal{G}} T'$, the monomial $T $ comes before the monomial $T'$  with respect to a term order $\succ_{\tau}$ such that the marked monomials of $\mathcal{G}$ are the initial terms with respect to $\succ_{\tau}$.  Thus, none of the fiber graphs  possess  cycles. 
\end{remark} 

Fiber graphs are significant tools to study  Gr\"obner bases of $T(I)$ and this importance is due to the following corollary which is a direct consequence of 
 \Cref{fibergraphinduces} and the definition of fiber graphs (see also \cite[Theorem 5.5]{S}, \cite[Proposition 4.5]{dipasquale2019rees} and \cite[Proposition 2.5]{dipasquale2020koszul}).

\begin{corollary}\label{fibergraphdefinesGB}
The collection of marked binomials $\mathcal{G}\subseteq R$ is a Gr\"obner basis for $T(I)$ if and only if the fiber graph $\Gamma_{\mu}(I)$ is either empty or has a unique sink for every multidegree $\mu \in S$.  
\end{corollary}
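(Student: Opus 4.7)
The plan is to exploit the multigrading of the toric ideal $T(I)$ together with \Cref{fibergraphinduces} so that the fiber graphs $\Gamma_\mu(I)$ record precisely the reduction behavior of $\longrightarrow_{\mathcal{G}}$ within each multidegree. First I would make three book-keeping observations. (i) Each marked binomial in $\mathcal{G}$ is a difference of two monomials having the same multidegree (otherwise the reduction relation would be ill-defined on $\Gamma_\mu(I)$), so $\mathcal{G}\subseteq T(I)$. (ii) Every one-step reduction $T\longrightarrow_{\mathcal{G}} T'$ preserves multidegree, and hence the fiber graph $\Gamma_\mu(I)$ is exactly the union of the directed graphs $\Gamma_T(\mathcal{G})$ over all vertices $T\in R_\mu$. (iii) The toric ideal $T(I)$ is multihomogeneous, so in any argument about membership we may restrict to multihomogeneous $f\in T(I)$, whose support lies in a single fiber $R_\mu$.

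For the forward direction, suppose $\mathcal{G}$ is a Gr\"obner basis for $T(I)$ with respect to a term order $\succ_\tau$ having the marked monomials as initial terms. Fix a multidegree $\mu$ with $R_\mu\neq 0$. By \Cref{fibergraphinduces} applied to each vertex, every $T\in R_\mu$ admits a unique sink in $\Gamma_T(\mathcal{G})$; this sink is the normal form of $T$ modulo $\mathcal{G}$. If $T,T'\in R_\mu$ are two vertices, then $T-T'\in T(I)$ and hence reduces to $0$ mod $\mathcal{G}$, which forces the two normal forms to coincide. Therefore $\Gamma_\mu(I)$ has a single sink.

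For the converse, assume every $\Gamma_\mu(I)$ is empty or has a unique sink; fiber graphs are acyclic by the remark following \Cref{fibergraphmulti}. Since the directed graph $\Gamma_T(\mathcal{G})$ of any monomial $T\in R$ is contained in the fiber graph $\Gamma_{\phi_I(T)}(I)$, each $\Gamma_T(\mathcal{G})$ also has a unique sink and no cycles. \Cref{fibergraphinduces} then produces a term order $\succ_{\tau}$ with respect to which $\mathcal{G}$ is a Gr\"obner basis of $\langle \mathcal{G}\rangle$ with the marked monomials as initial terms. It remains to show $\langle\mathcal{G}\rangle = T(I)$ and that $\mathcal{G}$ is a Gr\"obner basis of $T(I)$. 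Take any multihomogeneous $f\in T(I)$ of multidegree $\mu$, and let $\tilde{f}$ be its (well-defined) remainder upon reduction by $\mathcal{G}$. Then $\tilde{f}\in T(I)$ and every monomial in $\supp(\tilde{f})$ is a sink of $\Gamma_\mu(I)$; by the uniqueness of the sink, $\tilde{f}$ is a scalar multiple of a single monomial, hence $\phi_I(\tilde{f})\neq 0$ unless $\tilde{f}=0$. Since $\phi_I(\tilde{f})=0$, we conclude $\tilde{f}=0$. Thus every element of $T(I)$ reduces to zero modulo $\mathcal{G}$, which is precisely the statement that $\mathcal{G}$ is a Gr\"obner basis of $T(I)$.

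The main obstacle I anticipate is the bookkeeping in the converse: one must pass between the directed graphs of monomials (which feed into \Cref{fibergraphinduces}) and the fiber graphs of multidegrees (in the hypothesis), and one must carefully use multihomogeneity of $T(I)$ to reduce an arbitrary element of $T(I)$ to a single fiber before invoking the unique-sink hypothesis. Once those reductions are organized, both implications follow cleanly from \Cref{fibergraphinduces} and the definition of $\Gamma_\mu(I)$.
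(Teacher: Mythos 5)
Your proof is correct and takes exactly the approach the paper has in mind: the paper does not give an argument for this corollary, only asserting that it is a direct consequence of \Cref{fibergraphinduces} and the definition of fiber graphs, and your write-up supplies precisely the details that are elided. One small refinement worth making explicit in the converse: the directed graph $\Gamma_T(\mathcal{G})$ is not merely a subgraph of the fiber graph $\Gamma_{\phi_I(T)}(I)$; it is closed under outgoing edges (if $v\to w$ in the fiber graph and $v$ is reachable from $T$, then so is $w$). This is what lets you conclude that any sink of $\Gamma_T(\mathcal{G})$ is already a sink of $\Gamma_{\phi_I(T)}(I)$, so the unique-sink hypothesis on the fiber graph transfers to $\Gamma_T(\mathcal{G})$. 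Your restriction to multihomogeneous $f\in T(I)$ is justified because $T(I)$ is a multigraded ideal and the reduction by $\mathcal{G}\subseteq T(I)$ preserves multidegree, so the leading term of any $f$ is the leading term of one of its multigraded components.
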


\begin{remark}\label{directsumfibergrapgGB}
Notice that the fiber graph construction given in  \Cref{fibergraphmulti} is associated to a toric map $\phi_I$. Thus one can define fiber graphs for direct sum of ideals $I_1 \oplus \dots \oplus I_r$ via the toric map  $\varphi '$ from \Cref{def:multi}. As a result, we can extend  \Cref{fibergraphdefinesGB} to show that a collection of marked binomials form a Gr\"obner basis for $T(I_1 \oplus \dots \oplus I_r)$.
\end{remark}

\begin{notation}\label{abcd}
 For the remainder of this section, we use the notation  $I=\MB (M,N)$ to denote a strongly stable ideal with two  quadratic Borel generators $M$ and $N$ where $M=x_ax_b$, $N=x_cx_d$ such that  $c<a \leq b <d$. Furthermore, we denote the collection of the minimal monomial generators of $\MB (M)$ by $\MB_M$ and collection of the minimal monomial generators in $\MB (N) \setminus \MB (M)$ by $\MB_N$. 
\end{notation}

In certain cases, one can classify vertices of a fiber graph based on the factorization of a vertex. In such situations, the structure of one vertex determines the structure of all other vertices of a fiber graph. Before we provide this classification, we introduce the following notation. 

\begin{definition}
A vertex $T$ in $\Gamma_{\mu}(I)$ is  called a  \emph{type $M$ vertex} if $ T= \prod_{i=1}^s T_{m_i}$ where $m_i \in \MB_M$ for all $i \in [s]$. Similarly,  if $T= \prod_{j=1}^t T_{n_j} $ where $ n_j \in \MB_N$ for all $j\in [t]$, we say $T$ is a  \emph{type $N$ vertex}.
\end{definition}

A fiber graph may have vertices which are neither type $M$ nor type $N$. However, the existence of a type $M$ vertex forces
all other vertices to also be of type $M$. The same situation occurs when a fiber graph has a vertex of type $N$.

\begin{lemma}\label{vertices}
Let $\mu$ be a multidegree in $S$ and $I=\MB (M,N)$.  
\begin{enumerate}[(a)]
    \item If $ \Gamma_{\mu}(I)$ has a vertex of type $M$, then all other vertices of $ \Gamma_{\mu}(I)$  are of type $M$.
       \item If $ \Gamma_{\mu}(I)$ has a vertex of type $N$, then all other vertices of $ \Gamma_{\mu}(I)$ are of type $N$.
\end{enumerate}
\end{lemma}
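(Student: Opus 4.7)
The plan is to reduce vertex type to a simple support or degree condition on the multidegree $\mu$, so that two distinct types cannot share the same $\mu$. The key structural observation about $\MB_M$ and $\MB_N$ is the following: since $M = x_ax_b$ with $a \leq b$ and every one-step strongly stable reduction can only lower indices, every $m \in \MB_M$ is a monomial in $\KK[x_1,\ldots,x_b]$. A quick analysis of $\MB(N)\setminus\MB(M)$ using $c<a\leq b<d$ shows that $\MB_N$ consists precisely of the monomials $x_ix_j$ with $1\leq i\leq c$ and $b+1\leq j\leq d$; in particular, each $n\in\MB_N$ contains exactly one variable with index in $\{b+1,\ldots,d\}$.

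For part (a), I would fix a type $M$ vertex $T_0 = \prod T_{m_i}$ with each $m_i \in \MB_M$ and observe that $\mu = \phi_I(T_0) = \prod m_i$ lies in $\KK[x_1,\ldots,x_b]$. If some other vertex $T' = \prod T_{g_j}$ satisfying $\phi_I(T')=\mu$ had even a single factor $g_j \in \MB_N$, then $\mu$ would be divisible by an $x_k$ with $k>b$, contradicting the support of $\mu$. Hence every $g_j \in \MB_M$, and $T'$ is also of type $M$.

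For part (b), I would run a block-degree count. If $T_0 = \prod_{i=1}^{k} T_{n_i}$ is of type $N$, then $\mu = \prod n_i$ has degree exactly $k$ in the variable block $\{x_{b+1},\ldots,x_d\}$, one contribution from each $n_i$. Any vertex $T' = \prod_{j=1}^{\ell} T_{g_j}$ with $\phi_I(T')=\mu$ must satisfy $\ell = k$, since $\mu$ has total degree $2k$ and every generator of $I$ is a quadric. Factors $g_j \in \MB_M$ contribute $0$ to the block-degree in $\{x_{b+1},\ldots,x_d\}$, while factors $g_j \in \MB_N$ contribute exactly $1$, so the number of $\MB_N$-factors of $T'$ equals $k=\ell$. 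Thus every $g_j$ lies in $\MB_N$, and $T'$ is of type $N$.

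I do not anticipate a genuine obstacle beyond establishing the initial structural description of $\MB_M$ and $\MB_N$. Once their support and block-degree signatures are pinned down, both parts collapse to one-line arguments that only involve the multidegree $\mu$; no dynamics of the fiber graph (edges, sinks, or cycles) need enter the proof, which is why the same short argument handles the two cases symmetrically.
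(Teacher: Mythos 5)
Your proof is correct and follows essentially the same route as the paper's: both arguments rest on the observations that every monomial in $\MB_M$ has both indices at most $b$, while every monomial in $\MB_N$ has exactly one index exceeding $b$, so the support of $\mu$ (for part (a)) and the degree of $\mu$ in the block $\{x_{b+1},\ldots,x_d\}$ together with the total degree (for part (b)) force the type of every vertex in the fiber. The paper phrases part (b) by matching the high-index factor $x_{q_1}\cdots x_{q_l}$ of $\mu$ across the two factorizations, but this is the same block-degree count you make explicit.
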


\begin{proof}
(a) Let $ T= T_{m_1} \cdots T_{m_k}$ where $m_i=x_{s_i}x_{t_i} \in \MB_M$. Since $T$ is a vertex in $\Gamma_{\mu} (I)$, we have 
$\mu = m_1\cdots m_k= \prod_{i=1}^k x_{s_i}x_{t_i}$
where $s_i \leq t_i \leq b$ for each $i$. Note that there exists no $x_q$ dividing $\mu$ such that $q>b$.

Suppose there exists another vertex $T'$ of $\Gamma_{\mu} (I)$ such that $T'= \Big( \prod_{i=1}^{k'} T_{f_i}\Big)  \Big(  \prod_{j=1}^{l} T_{n_j}\Big)$ where $f_i \in \MB_M$ for each $i \in [k']$ and $n_j \in \MB_N$ for each $j\in [l]$. Let $f_i= x_{a_i}x_{b_i}$ and $n_j= x_{p_j}x_{q_j}$ for each $i$ and $j$.  Then
 $$\mu =   \Big( \prod_{i=1}^{k'} x_{a_i}x_{b_i} \Big) \Big( \prod_{j=1}^{l}x_{p_j}x_{q_j} \Big)$$
 where $a_i \leq b_i\leq b$ and $p_j \leq c< b<q_j$ for each $i$ and $j$. Since $\mu$ does not have any factor $x_q $ with $q>b$, we must have $l=0$. Thus $k=k'$ and $T'$ is a vertex of type $M$.

(b) Let $ T= T_{n_1} \cdots T_{n_l}$ where $n_i=x_{p_i}x_{q_i} \in \MB_N$ such that $p_i \leq c<b<q_i$ for each $i$. Since $T$ is a vertex in $\Gamma_{\mu} (I)$, we have  $\mu = n_1\cdots n_l$.    Note that any factor of $\mu$ divisible only by variables with indices greater than $b$ is necessarily a divisor of $x_{q_1}\cdots x_{q_l}$.

Suppose there exists another vertex $T'$ of $\Gamma_{\mu} (I)$ such that $T'= \Big( \prod_{i=1}^{k} T_{m_i}\Big)  \Big(  \prod_{j=1}^{l'} T_{g_j}\Big)$ where $m_i \in \MB_M$ for each $i \in [k]$ and $g_j \in \MB_N$ for each $j\in [l']$.  Let $m_i= x_{s_i}x_{t_i}$ and $q_j= x_{c_j}x_{d_j}$ for each $i$ and $j$.  Then
 $$\mu = \Big( \prod_{i=1}^{k} x_{s_i}x_{t_i} \Big) \Big( \prod_{j=1}^{l'}x_{c_j}x_{d_j} \Big)$$
 where $s_i \leq t_i\leq b$ and $c_j \leq c< b<d_j$ for each $i$ and $j$. Then we must have $x_{q_1} \cdots x_{q_l} = x_{d_1} \cdots x_{d_{l'}}$ which implies that $l=l'$. Thus $k=0$ and $T'$ is a vertex of type $N$.
\end{proof}

\begin{notation}
 For the sake of simplicity, we often use the notation $T_{ij}$ to refer to  $T_{x_ix_j}$ for $i\leq j$. It should also be noted that  $T_{ij}=T_{x_ix_j}=T_{x_jx_i}=T_{ji}$. We set $T_{ij}$ as our standard notation where $i\leq j$.
\end{notation}

\subsection{The revlex Gr\"obner basis}\label{subsec:RevLex} 

In this subsection, we describe a Gr\"obner basis for $T(I)$ where $I= \MB (M,N)$ with respect to graded reverse lexicographic order by using fiber graphs. 
\begin{definition}\label{def:MonRevLex}
We say $T_{ij}>_{rlex} T_{i'j'}$  if $x_ix_j\succ_{rlex} x_{i'}x_{j'}$ in $S=K[x_1,\dots,x_n]$. We use $\succ_{rlex}$ to also denote the graded reverse lexicographic order in $R$ induced by $>_{rlex}$ on the $T$ variables.
\end{definition}

The following result was first proved in \cite[Theorem~1.3]{Conca} for symmetric ladder ideals (see \Cref{ladder}). Our proof differs from \cite{Conca} as it employs fiber graphs to obtain a given collection as a Gr\"obner basis. Additionally, the ideas used in the proof of the following lemma are fundamental to establish our last main result (\Cref{thm:MultiReesGB}).

\begin{theorem}\label{grobnerrevlex} 
Let $\mathcal{G}_1=\{\underline{T_{u}T_{v}}-T_{u'}T_{v'}: uv=u'v' \text{ and } u,v \succ_{rlex} v'\}$. Then $\mathcal{G}_1$ is a Gr\"obner basis of $T(I)$ with respect to the term order $\succ_{rlex}$. 
\end{theorem}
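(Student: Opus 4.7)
The plan is to apply \Cref{fibergraphdefinesGB}: it will suffice to prove that for every multidegree $\mu\in S$, the fiber graph $\Gamma_{\mu}(I)$ (with respect to $\mathcal{G}_1$) is either empty or has a unique sink. Before analyzing fibers, I would verify that the marked monomials are genuinely the leading terms under $\succ_{rlex}$: in a relation $\underline{T_uT_v}-T_{u'}T_{v'}\in\mathcal{G}_1$ with $u,v\succ_{rlex}v'$, the variable $T_{v'}$ is the revlex-smallest of $T_u,T_v,T_{u'},T_{v'}$; it has exponent $0$ in $T_uT_v$ and exponent at least $1$ in $T_{u'}T_{v'}$, so by the definition of revlex on $R$ one obtains $T_uT_v\succ_{rlex}T_{u'}T_{v'}$. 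Consequently every reduction strictly decreases the revlex order, and all fiber graphs are automatically acyclic.

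Next, using \Cref{vertices}, I would split $\Gamma_\mu(I)$ according to vertex type: either all vertices are of type $M$, all of type $N$, or all mixed. In the first two cases, the fiber is really a fiber of the toric ideal of the principal strongly stable ideal $\mathcal{B}(M)$ (respectively $\mathcal{B}(N)$); the unique sink is the sorted factorization of $\mu$. Explicitly, if the variables of $\mu$ in weakly increasing order are $y_1\leq\cdots\leq y_{2s}$, the sink pairs $(x_{y_t},x_{y_{s+t}})$ for $1\leq t\leq s$. Non-emptiness of the fiber ensures (by pigeonhole) that $y_s\leq a$ (or $y_s\leq c$), so the sorted pairs do lie in $\mathcal{B}_M$ (or $\mathcal{B}_N$). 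Moreover, any other factorization contains a non-sorted pair whose sorted counterpart has revlex-smaller min, giving a reduction in $\mathcal{G}_1$.

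The main obstacle is the mixed case, where $\mathcal{B}_M\cup\mathcal{B}_N$ is not sortable in general. Here I would first observe that the variables of $\mu$ with index $>b$ can only appear as second slots of $N$-factors, so if $\mu$ has $l$ such variables $q_1\leq\cdots\leq q_l$, every mixed vertex has exactly $l$ factors of type $N$ and $s-l$ factors of type $M$, with the $N$-factor large slots fixed as $q_1,\ldots,q_l$. The remaining freedom across vertices is (i) which $l$ variables of $\mu$ of index $\leq c$ are promoted to be $N$-factor small slots, (ii) how those promoted indices are matched with the $q_j$'s, and (iii) how the remaining $2(s-l)$ variables are partitioned into $M$-factors. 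The goal is to prove that the unique sink satisfies: (1) the $l$ promoted indices are the $l$ \emph{largest} indices $\leq c$ appearing in $\mu$; (2) those promoted indices, listed in increasing order, are matched with $q_1,\ldots,q_l$; and (3) the remaining variables form $M$-factors via the standard sorted pairing.

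The most delicate condition to enforce is (1). Given a vertex violating (1), one can locate an $N$-factor $x_{y_i}x_{q_r}$ together with an $M$-factor $x_{y_j}x_z$ such that $y_i<y_j\leq c$. The swap to $(x_{y_i}x_z,\,x_{y_j}x_{q_r})$ is valid because $x_{y_i}x_z\in\mathcal{B}_M$ (min slot $y_i\leq c\leq a$, other slot $z\leq b$) and $x_{y_j}x_{q_r}\in\mathcal{B}_N$ (min slot $y_j\leq c$, large slot $q_r\leq d$). It is a reduction because $x_{y_i}x_{q_r}$ and $x_{y_j}x_{q_r}$ share the large variable $x_{q_r}$, and $y_i<y_j$ forces $x_{y_j}x_{q_r}\prec_{rlex}x_{y_i}x_{q_r}$, strictly decreasing the pair's revlex-min. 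Analogous pairwise $N$-$N$ swaps enforce (2), and standard sorting moves on $M$-factors enforce (3). Since reductions preserve validity of the factorization (every element of $\mathcal{G}_1$ is a binomial whose two monomials are valid vertices of some fiber graph) and strictly decrease revlex, any vertex reduces to a sink, and any sink must satisfy (1)--(3); hence the sink is unique, completing the argument.
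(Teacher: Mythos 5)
Your overall strategy---invoke \Cref{fibergraphdefinesGB}, observe that acyclicity is automatic since every $\mathcal{G}_1$-reduction strictly decreases $\succ_{rlex}$, and then pin down uniqueness of the sink by exhibiting it as an explicit canonical factorization---is a plausible alternative to the paper's argument, and several pieces are correct: the verification that $T_uT_v\succ_{rlex}T_{u'}T_{v'}$ (the revlex-smallest $T$-variable occurs in $T_{u'}T_{v'}$ but not in $T_uT_v$), the counting of $N$-factors in a mixed vertex, and the explicit $N$-$M$ swap you write down for step~(1), which is indeed an element of $\mathcal{G}_1$ with the correct marked term.

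The gap is that your description of the sink is wrong. You assert that the sink of a type-$M$ (or type-$N$) fiber is the \emph{sorted} factorization pairing $(x_{y_t},x_{y_{s+t}})$, and you reuse the sorted pairing in step~(3) of the mixed case. Under $\succ_{rlex}$, however, the sorted factorization is a source, not a sink: if $u,v\succ_{rlex}v'$ then $T_uT_v$ is \emph{marked} in $\mathcal{G}_1$, so sorting increases the order and reductions push away from the sorted form. This is visible in the paper's own \Cref{ex:33_25}: for $I=\mathcal{B}(x_3^2,x_2x_5)$ and $\mu=x_1^2x_2^2x_3^2x_4x_5$ the sink is $T_{11}T_{33}T_{24}T_{25}$, whose $M$-part $T_{11}T_{33}$ is \emph{not} sorted; indeed $T_{13}^2-T_{11}T_{33}\in\mathcal{G}_1$ (since $x_1x_3\succ_{rlex}x_3^2$) gives the reduction $T_{13}^2\to T_{11}T_{33}$, so your ``standard sorting moves'' run in the wrong direction. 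Moreover the correct sink---built by repeatedly peeling off the revlex-\emph{smallest} $\MB_M$- or $\MB_N$-divisor $M'$, $N'$, as in \Cref{sinkreductionrlex} and \Cref{cor:uniqueSink}---is not given by any single closed-form pairing rule (e.g.\ the naive contiguous pairing of $y_1\le\cdots\le y_{2s}$ can land outside $\MB_M$), which is precisely why the paper describes the sink recursively rather than explicitly. Because your uniqueness argument hinges on the characterization ``every sink satisfies (1)--(3),'' and (3) fails, the proof does not go through as written; conditions (1)--(2) look correct and can be enforced by moves like the one you display, but you would need to replace (3) by the $M'$-peeling recursion (or an equivalent correct normal form) to complete the argument.
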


\begin{proof} 
It suffices to prove that, for each multidegree $\mu \in S$ with respect to $\mathcal{G}_1$, the fiber graph $\Gamma_{\mu}(I)$ is empty  or has a unique sink by \Cref{fibergraphdefinesGB}. Thus our goal is to prove the claim: ``\textit{If} $\mu$ \textit{is a multidegree in} $S$, \textit{then} $\Gamma_{\mu}(I)$ \textit{is empty or possesses a unique sink}.'' If the degree of $\mu$ is odd, then $\Gamma_{\mu}(I)$ must be empty. Suppose the degree of $\mu$ is even. If $\Gamma_{\mu}(I)$ is empty, we are done.  If not, the proof follows from \Cref{cor:uniqueSink}.
\end{proof}

\begin{remark}\label{ladder} Consider the symmetric $d \times d$ matrix whose entry in row $i$ and column $j$ is $T_{ij}$ if $x_ix_j \in \mathcal{B}(M,N)$ and $0$ otherwise. This matrix is represented by the symmetric ladder diagram (the colored area) given in \Cref{staircasediagram}. The set $\mathcal{G}_1$ is the collection of the $2 \times 2$ minors of this matrix that correspond to binomials. In this case, the leading (marked) monomial is chosen according to the term order $\succ_{rlex}$, defined in  \Cref{def:MonRevLex}, and it corresponds to the main diagonal of the related $2 \times 2$ minor.
The variables of $R$ are ordered in the following way.
\begin{align*}
    T_{11}>_{rlex} T_{12}>_{rlex} T_{22} >_{rlex} \dots >_{rlex} T_{1b}>_{rlex}T_{2b}>_{rlex} \dots >_{rlex} T_{ab}>_{rlex}  \\
     T_{1(b+1)}>\dots >_{rlex} T_{c(b+1)}>_{rlex} \dots >_{rlex} T_{1d}>_{rlex} \dots >_{rlex}T_{cd}
\end{align*}
\begin{figure}[h]
\begin{center}
    	\begin{tikzpicture}[scale=0.7]
	
	\begin{scope}[greennode/.style={circle, draw=green!60, fill=green!5}, bluenode/.style={circle, draw=blue!60, fill=blue!5}, rednode/.style={circle, draw=red!60, fill=red!5}]
	\end{scope}
	
\draw[step=1.0,black] (0,0) grid (9,9);
\draw[red,thick,dashed] (0,9) -- (2,9);
\draw[red,thick,dashed] (2,9) -- (2,7);
\draw[red,thick,dashed] (2,7) -- (4,7);
\draw[red,thick,dashed] (4,7) -- (4,4);
\draw[red,thick,dashed] (4,4) -- (7,4);
\draw[red,thick,dashed] (7,4) -- (7,2);
\draw[red,thick,dashed] (7,2) -- (9,2);
\draw[red,thick,dashed] (9,2) -- (9,0);

\node[left] at (0,1.5) {$c$};
\node[left] at (0,3.5) {$a$};
\node[left] at (0,6.5) {$b$};
\node[left] at (0,8.5) {$d$};
\node[above] at (1.5,-0.7) {$c$};
\node[above] at (3.5,-0.7) {$a$};
\node[above] at (6.5,-0.7) {$b$};
\node[above] at (8.5,-0.7) {$d$};

\filldraw[fill=blue!20!white, draw=black] (0,0) rectangle (1,1);
\filldraw[fill=blue!20!white, draw=black] (1,0) rectangle (2,1);
\filldraw[fill=blue!20!white, draw=black] (2,0) rectangle (3,1);
\filldraw[fill=blue!20!white, draw=black] (3,0) rectangle (4,1);
\filldraw[fill=blue!20!white, draw=black] (4,0) rectangle (5,1);
\filldraw[fill=blue!20!white, draw=black] (5,0) rectangle (6,1);
\filldraw[fill=blue!20!white, draw=black] (6,0) rectangle (7,1);
\filldraw[fill=blue!20!white, draw=black] (7,0) rectangle (8,1);
\filldraw[fill=blue!20!white, draw=black] (8,0) rectangle (9,1);
\filldraw[fill=blue!20!white, draw=black] (1,1) rectangle (2,2);
\filldraw[fill=blue!20!white, draw=black] (2,1) rectangle (3,2);
\filldraw[fill=blue!20!white, draw=black] (3,1) rectangle (4,2);
\filldraw[fill=blue!20!white, draw=black] (4,1) rectangle (5,2);
\filldraw[fill=blue!20!white, draw=black] (5,1) rectangle (6,2);
\filldraw[fill=blue!20!white, draw=black] (6,1) rectangle (7,2);
\filldraw[fill=blue!20!white, draw=black] (7,1) rectangle (8,2);
\filldraw[fill=blue!20!white, draw=black] (8,1) rectangle (9,2);
\filldraw[fill=blue!20!white, draw=black] (2,2) rectangle (3,3);
\filldraw[fill=blue!20!white, draw=black] (3,2) rectangle (4,3);
\filldraw[fill=blue!20!white, draw=black] (4,2) rectangle (5,3);
\filldraw[fill=blue!20!white, draw=black] (5,2) rectangle (6,3);
\filldraw[fill=blue!20!white, draw=black] (6,2) rectangle (7,3);
\filldraw[fill=blue!20!white, draw=black] (3,3) rectangle (4,4);
\filldraw[fill=blue!20!white, draw=black] (4,3) rectangle (5,4);
\filldraw[fill=blue!20!white, draw=black] (5,3) rectangle (6,4);
\filldraw[fill=blue!20!white, draw=black] (6,3) rectangle (7,4);

\filldraw[fill=red!40!white, draw=black] (0,8) rectangle (1,9);
\filldraw[fill=red!40!white, draw=black] (1,8) rectangle (2,9);
\filldraw[fill=red!40!white, draw=black] (0,7) rectangle (1,8);
\filldraw[fill=red!40!white, draw=black] (1,7) rectangle (2,8);
\filldraw[fill=red!40!white, draw=black] (0,6) rectangle (1,7);
\filldraw[fill=red!40!white, draw=black] (1,6) rectangle (2,7);
\filldraw[fill=red!40!white, draw=black] (2,6) rectangle (3,7);
\filldraw[fill=red!40!white, draw=black] (3,6) rectangle (4,7);
\filldraw[fill=red!40!white, draw=black] (3,5) rectangle (4,6);
\filldraw[fill=red!40!white, draw=black] (2,5) rectangle (3,6);
\filldraw[fill=red!40!white, draw=black] (1,5) rectangle (2,6);
\filldraw[fill=red!40!white, draw=black] (0,5) rectangle (1,6);
\filldraw[fill=red!40!white, draw=black] (3,4) rectangle (4,5);
\filldraw[fill=red!40!white, draw=black] (2,4) rectangle (3,5);
\filldraw[fill=red!40!white, draw=black] (1,4) rectangle (2,5);
\filldraw[fill=red!40!white, draw=black] (0,4) rectangle (1,5);
\filldraw[fill=red!40!white, draw=black] (2,3) rectangle (3,4);
\filldraw[fill=red!40!white, draw=black] (1,3) rectangle (2,4);
\filldraw[fill=red!40!white, draw=black] (0,3) rectangle (1,4);
\filldraw[fill=red!40!white, draw=black] (1,2) rectangle (2,3);
\filldraw[fill=red!40!white, draw=black] (0,2) rectangle (1,3);
\filldraw[fill=red!40!white, draw=black] (0,1) rectangle (1,2);
	\end{tikzpicture}

\caption{The symmetric ladder diagram for $I=\mathcal{B}(x_ax_b,x_cx_d)$ with $c<a \leq b<d$}    
\label{staircasediagram}
    \end{center}
\end{figure}
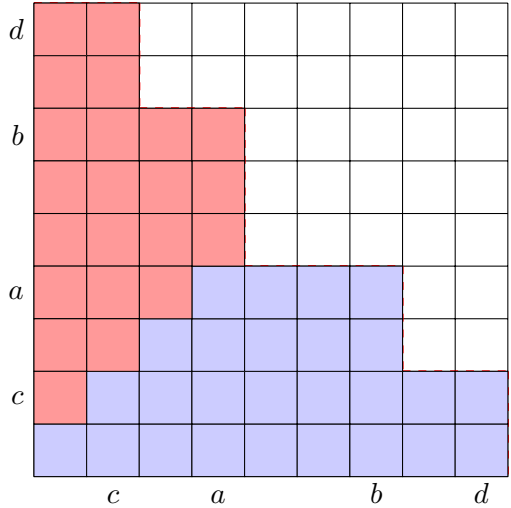

\end{remark}

In order to study fiber graphs in a systematic way, we set the following standard factorization for a monomial in $R$ with respect to the $\succ_{rlex}$ order.

\begin{notation}\label{lowestdivisor} Let $T$ be a monomial in $R$. We express  $T$ as 
$T=(T_{p_1}\cdots T_{p_k})(T_{p_{k+1}} \cdots T_{p_{k+l}})$ such that 
\begin{itemize}
    \item $p_i \in \MB_M$ for each $i \in \{1, \ldots, k\}$ and $p_{k+j} \in \MB_N$ for each $j \in \{1, \ldots, l\}$,  
    \item $p_1  \succeq_{rlex} \dots \succeq_{rlex} p_k \succeq_{rlex} p_{k+1} \succeq_{rlex} \dots \succeq_{rlex} p_{k+l}$.
\end{itemize} 
In the above expression, we denote the latest variable factor of $T$ in $\MB_M$ by $L_M (T)$ and the latest variable factor of $T$ in $\MB_N$ by $L_N (T)$.  Based on the above expression of $T$, one has $L_M (T)=T_{p_k}$ and $L_N (T)=T_{p_{k+l}}$ unless $k$ or $l$ is equal to zero. 

Given a multidegree $\mu$  in $S$, we use $M'$ and $N'$ to denote the smallest elements in $\MB_M$  and $\MB_N$, respectively, with respect to $\succ_{rlex}$ dividing $\mu$.
\end{notation}

In the following example, we provide an example of a fiber graph and we use the above standard factorization to label the vertices of the fiber graph.

\begin{example}\label{ex:33_25}
Let $I=\mathcal{B}(x_3^2, x_2x_5)$ and let $\mu=x_1^2x_2^2x_3^2x_4x_5.$ Then the fiber graph $\Gamma_{\mu}(I)$  is given in \Cref{f:fibergraph}.

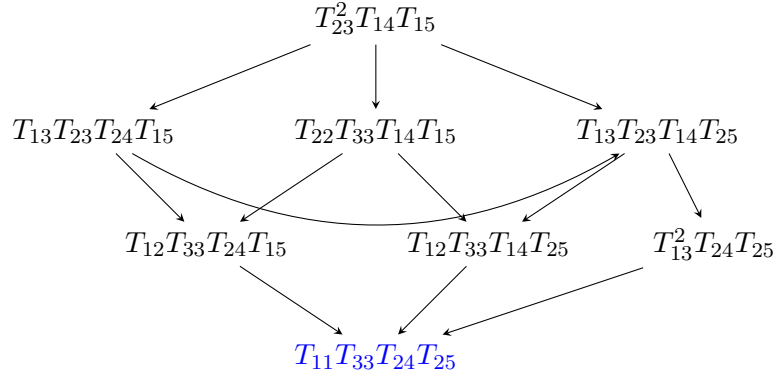
\begin{figure}[h]
  \begin{center}
    \begin{tikzpicture}[scale=0.75,->,>=stealth]
      \node (0) at (0,0) {$T_{23}^2T_{14} T_{15}$};
      \node (1) at (-5,-2) {$T_{13}T_{23}T_{24} T_{15}$};
      \node (2) at (0,-2) {$T_{22}T_{33}T_{14} T_{15}$};
      \node (3) at (5,-2) {$T_{13}T_{23}T_{14} T_{25}$};
      \node (4) at (-3,-4) {$T_{12}T_{33}T_{24} T_{15}$};
      \node (5) at (2,-4) {$T_{12}T_{33}T_{14} T_{25}$};
      \node (6) at (6,-4) {$T_{13}^2T_{24} T_{25}$};
      \node (7) at (0,-6) {\textcolor{blue}{$T_{11}T_{33}T_{24} T_{25}$}};

    \draw (0)--(1);
        \draw (0)--(2);
            \draw (0)--(3);
    \draw (1)--(4);
        \draw (1) to [bend right] (3);
    \draw (2)--(4);
        \draw (2)--(5);
            \draw (3)--(5);
        \draw (3)--(6);
        \draw (4)--(7);
        \draw (5)--(7);       
        \draw (6)--(7);            
    \end{tikzpicture}
    \caption{The fiber graph $\Gamma_{x_1^2x_2^2x_3^2x_4x_5} (I)$ for $I=\mathcal{B}(x_3^2, x_2x_5)$.}    
\label{f:fibergraph}
    \end{center}
\end{figure}

Note that the fiber graph given in \Cref{f:fibergraph} has no cycles and has a unique sink $T= T_{11}T_{33}T_{24} T_{25}$ where $L_M (T)= T_{33}$ and $L_N (T)= T_{25}$. In addition, both  Borel generators $M=x_3^2$ and $N=x_2x_5$ divides $\mu$, so  $M=M'$ and $N=N'$. As we shall see in the following lemma, it is not a coincidence that $L_N(T)=T_{25}=T_N$ for the sink vertex $T$.
\end{example}

\begin{lemma}\label{fibergraphmovesforrlex} 
(Adopt \Cref{lowestdivisor}) Let $\mu$ be a multidegree and  $T$ be a vertex of $\Gamma_{\mu}(I)$.   
 \begin{enumerate}[(a)]
    \item If $L_N (T)$ exists and $L_N (T) \neq T_{N'}$, then there is an edge from $T$ to a vertex $T'$ of $\Gamma_{\mu}(I)$ such that
     $$L_N (T) >_{rlex}  L_N (T') = T_{N'}.$$
    \item Suppose $L_N (T)$ does not exist. If $L_M (T)$ exists and $L_M (T) \neq T_{M'}$ then there is an edge from $T$ to a vertex $T'$ of $\Gamma_{\mu}(I)$  such that 
    $$L_M (T) >_{rlex}  L_M (T') =  T_{M'}.$$
\end{enumerate}
\end{lemma}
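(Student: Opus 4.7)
The plan is to explicitly construct the required reduction edge by locating, inside the factorization of $T$, a factor that ``witnesses'' the target index $c'$ (respectively $a'$ in part (b)). For part (a), I would first observe that every prime factor of $\mu$ with index $> b$ must come from a type $N$ factor of $T$, so the largest such index used anywhere in $T$ equals $d'$, where $T_{N'} = T_{c'd'}$. This forces $L_N(T) = T_{id'}$ for some $i \leq c$. Combining the maximality of $c'$ (so $x_{c'}\mid\mu$ and no larger $x_k$ with $k\leq c$ does) with the hypothesis $L_N(T) \neq T_{N'}$ then upgrades this to $i < c'$.

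Since $x_{c'}\mid\mu$, some factor of $T$ contains $x_{c'}$, and I would split on the type of this factor. \textbf{Case 1:} a type $M$ factor $T_{c't}$ exists; I would perform the swap $T_{c't}T_{id'} \to T_{it}T_{c'd'}$. \textbf{Case 2:} every occurrence of $x_{c'}$ in $T$ lies in a type $N$ factor $T_{c's}$; since $T_{c'd'}$ itself cannot be a factor of $T$ (otherwise $L_N(T) = T_{N'}$), the maximality of $d'$ forces $s < d'$, and I would use the swap $T_{c's}T_{id'} \to T_{is}T_{c'd'}$. In both cases the new pair contains $T_{c'd'} = T_{N'}$, which is globally smallest among elements of $\mathcal{B}_N$ dividing $\mu$, so $L_N(T') = T_{N'}$, giving the conclusion.

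Part (b) runs in parallel but stays entirely within type $M$ factors, since the hypothesis together with \Cref{vertices} guarantees $T$ has no type $N$ factors. I would write $T_{M'} = T_{a'b'}$ and $L_M(T) = T_{ib'}$ with $i < a'$, locate a type $M$ factor $T_{a't}$ of $T$ (automatically with $t < b'$, else $L_M(T) = T_{M'}$ already), and perform the swap $T_{a't}T_{ib'} \to T_{a'b'}T_{it}$. The argument is then identical in spirit to Case 1 above.

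The main obstacle, beyond organizing the case split, will be checking that each proposed swap genuinely belongs to $\mathcal{G}_1$; this amounts to the revlex inequalities $T_u,T_v \succ_{rlex} v'$ from \Cref{grobnerrevlex}, each of which reduces to a short exponent-vector calculation at the indices $i, c', d'$ (respectively $i, a', b'$). For instance, $T_{id'} \succ_{rlex} T_{c'd'}$ follows because the difference of exponent vectors has last nonzero entry $-1$ at position $c'$, since $i < c'$. A further routine verification is that each newly produced factor $T_{it}$, $T_{is}$, $T_{c'd'}$, etc., lies in $\mathcal{B}_M \cup \mathcal{B}_N$; the standing inequalities $c < a \leq b < d$ are exactly what makes this automatic.
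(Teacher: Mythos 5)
Your approach is correct in its essentials and is a genuine (and in one respect cleaner) alternative to the paper's proof. The paper splits on whether the strict revlex inequality $L_N(T) \succ_{rlex} T_{N'}$ arises at the first index or the second index of the two factors, and gives a separate argument for each. You instead observe upfront that the second index of $L_N(T)$ is forced to equal $d'$: every $x_j$ with $j>b$ dividing $\mu$ sits inside a type $N$ factor of $T$, which is a $\mathcal{B}_N$-element dividing $\mu$, so its second index is at most $d'$ by the revlex-minimality of $N'$; hence $d'$ is the largest such index, it is attained (because $N'\mid\mu$), and the revlex-smallest type $N$ factor of $T$ therefore has second index exactly $d'$. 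This shows the paper's ``$j<j'$'' branch is vacuous, and your $i<c'$ is the only case. The same reasoning gives $L_M(T)=T_{ib'}$ with $i<a'$ in part (b). Once you have that, your three swaps are all valid elements of $\mathcal{G}_1$, the produced factors do land in $\mathcal{B}_M\cup\mathcal{B}_N$ for the reasons you indicate, and the resulting $T'$ satisfies $L_N(T')=T_{N'}$ (resp.\ $L_M(T')=T_{M'}$) because $T_{N'}$ (resp.\ $T_{M'}$) is globally revlex-smallest among factors dividing $\mu$ of the relevant type.

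There is, however, one gap you should close, and it is exactly the edge case the paper spends a paragraph on. In part (b) you pick ``a type $M$ factor $T_{a't}$'' and then divide $T$ by $T_{a't}T_{ib'}$, implicitly assuming these are distinct factors (or the common one has multiplicity at least two). When $a'<b'$ this is automatic, since the second index of $T_{a't}$ is strictly less than $b'$. But when $M'=x_{a'}^2$ (i.e.\ $a'=b'$), the factor you locate can coincide with $L_M(T)=T_{ia'}$ itself, and your derived constraint $t<b'$ does not rule this out (it holds with $t=i<a'=b'$). The fix is the one the paper uses: since $x_{a'}^2\mid\mu$, the variable $x_{a'}$ appears at least twice among the factors of $T$, so either $T_{ia'}$ occurs with multiplicity $\geq 2$ (and the swap $T_{ia'}^2\to T_{a'a'}T_{ii}$ still lies in $\mathcal{G}_1$), or there is a second, distinct factor divisible by $x_{a'}$ that you should choose instead. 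Your part (a) does not have this problem: in Case~1 the two factors have different types, and in Case~2 they have different second indices ($s<d'$), so distinctness is automatic. A smaller nit: your parenthetical gloss on the maximality of $c'$ (``no larger $x_k$ with $k\le c$ divides $\mu$'') overstates what the definition of $N'$ gives; the correct statement is that $c'$ is maximal among first indices of $\mathcal{B}_N$-elements with second index $d'$ dividing $\mu$, which is what your conclusion $i<c'$ actually uses.
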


    \begin{proof} (a) Let $L_N (T)= T_{ij}$ and $N'=x_{i'}x_{j'}$ where $x_ix_j$ and $x_{i'}x_{j'}$ are both in $\MB_N$ with $i\leq j$ and $i'\leq j'$.  It follows from our assumption $L_N (T) \neq T_{N'}$ that  $ i < i'$  or  $j<j'$.  We start by observing $j=j'$. Suppose not. Since  $x_{j'}$ divides $\mu$,  there is some  monomial $u= x_s x_{j'}$ such that $T_u$ divides $T$. Then $T_{ij}>_{rlex}  T_u$, contradicting how $T_{ij}$ was chosen. Thus $j=j'$ and  $i< i'$.

    Our goal is to find $u, u' \in \mathcal{B} (M,N)$ such that $T_u T_{ij} -T_{u'} T_{i'j} \in \mathcal{G}_1$ where $T_uT_{ij}$ divides $T$. Once we find such $u, u'$, we can construct a vertex $T'$ of $\Gamma_{\mu}(I)$  such that $T \rightarrow_{\mathcal{G}_1} T'$. Since $x_{i'}$ divides $\mu$, there exists a monomial $u \in \mathcal{B} (M,N)$ divisible by $x_{i'}$ so that $T_u$ divides $T$. We claim that $T_u T_{ij}$ divides $T$. In order to prove the claim, it suffices to show $T_u \neq T_{ij}$. If $u=x_ix_j$, we must have  $i'=j$ as $u$ is divisible by $x_{i'}$ and $i<i'$.   Additionally, since $x_{j'}$ divides $\mu$, there exists a monomial $v \in \mathcal{B} (M,N)$ divisible by $x_{j'}$ such that $T_v$ divides $T$. Note that $T_v >_{rlex} L_N (T)=T_{ii'}$  by the definition of $L_N (T)$.  It follows from this comparison that $i'=j'$. This is not possible because $i'<j'$ since $x_{i'} x_{j'} \in \MB_N$.  Therefore, $T_uT_{ij}$ divides $T$.

  Let $u':=x_{i} ( u/x_{i'}) $ and $\displaystyle T' := T_{u'} T_{i'j} \big( T / (T_u T_{ij}) \big)$. It follows from the definition of strongly stable ideals that $u'\in \MB (M,N)$ as $i<i'$. Note that $T'$ is a vertex of $\Gamma_{\mu}(I)$.  Since $T_u >_{rlex} T_{ij} >_{rlex} T_{i'j}$, we have  $T_u T_{ij} -T_{u'} T_{i'j} \in \mathcal{G}_1$. Therefore,  $T \rightarrow_{\mathcal{G}_1} T'$ and  $L_N (T')= T_{N'}= T_{i'j}$.

(b) First note that $T$ is a type $M$ vertex because $L_N(T)$ does not exist.  Let $L_M (T)= T_{ij}$ and $M'= x_{i'}x_{j'}$ where $x_ix_j$ and $x_{i'}x_{j'}$ are both in $\MB_M$ with $i\leq j$ and $i'\leq j'$. It follows from our assumption $L_M (T) \neq T_{M'}$ that  $ i < i'$  or  $j<j'$.  Using the same arguments from (a), we have  $j=j'$ and $i< i'$.

Similar to the discussion in (a), we claim that there exists a monomial $u \in \MB_M$ such that it is divisible by $x_{i'}$ and $T_uT_{ij}$ divides $T$.  Since $x_{i'}$ divides $\mu$, there exists a monomial $u$ is divisible by $x_{i'}$ such that $T_u$ divides $T$. If $T_u \neq T_{ij},$ then $T_uT_{ij}$ divides $T$. If $T_u=T_{ij}$, then $u= x_ix_j$. Since $i<i'$ and $u$ is divisible by $x_{i'}$, we must have $i'=j=j'$ which implies that $M'=x_{i'}^2$ and $T_u=L_M (T)= T_{ii'}$. Since $M'$ divides $\mu,$ there exists another $T_{v}$ in the support of $T$ such that $v$ is divisible by $x_{i'}$. Then $T_vT_{ij}$ divides $T$. Since  $L_N(T)$ does not exist, $u,v$ must be both in $\MB_M$ and the claim holds. Since $u \in \MB_M$ and $x_{i'}$ divides $u$ where $i<i'$, the monomial  $u':=x_{i} (u / x_{i'})$ is in $\MB_M.$ Recall also that $M'= x_{i'} x_j \in \MB_M$. Then,  $T_u T_{ij} -  T_{u'} T_{i'j} \in \mathcal{G}_1$ which implies that $T \rightarrow_{\mathcal{G}_1} T'$ where 
 $\displaystyle T' = T_{u'} T_{i'j}  \big( T /(T_u T_{ij}) \big)$ is a vertex in $\Gamma_{\mu} (I)$. Note that $L_M (T')= T_{i'j}$ where  $L_M (T) >_{rlex} L_M (T') =T_{M'}$.
 \end{proof}

\begin{lemma}\label{sinkreductionrlex}
(Adopt \Cref{lowestdivisor}) Let $\mu$ be a multidegree such that $\Gamma_{\mu}(I)$ is nonempty.  If the fiber graph $\Gamma_{\mu}(I)$ has a vertex of type $M$, then any sink of $\Gamma_{\mu}(I)$ is of type $M$. In particular, every sink of $\Gamma_{\mu}(I)$ is of the form $Z_{M'}T_{M'}$ where $Z_{M'}$ is a sink of $\Gamma_{\frac{\mu}{M'}}(I)$. Otherwise, each sink of $\Gamma_{\mu}(I)$ is of the form $Z_{N'}T_{N'}$ where $Z_{N'}$ is a sink of $\Gamma_{\frac{\mu}{N'}}(I)$.
\end{lemma}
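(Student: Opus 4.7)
The plan is to fix a sink $T$ of $\Gamma_\mu(I)$ and split into two cases according to whether $\Gamma_\mu(I)$ contains a vertex of type $M$. In the first case, Lemma~\ref{vertices}(a) immediately upgrades the hypothesis to the assertion that \emph{every} vertex, including $T$, is of type $M$; in particular $L_N(T)$ is absent while $L_M(T)$ exists. Lemma~\ref{fibergraphmovesforrlex}(b) then forces $L_M(T)=T_{M'}$, for otherwise $T$ would admit an outgoing edge contradicting sink-ness. This gives the factorization $T=T_{M'}Z_{M'}$. In the complementary case no vertex of $\Gamma_\mu(I)$ is of type $M$, so $T$ must carry at least one generator from $\MB_N$, meaning $L_N(T)$ exists; Lemma~\ref{fibergraphmovesforrlex}(a) then forces $L_N(T)=T_{N'}$, producing $T=T_{N'}Z_{N'}$.

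What remains in both cases is to verify that the residual factor $Z$ is itself a sink of the smaller fiber graph $\Gamma_{\mu/M'}(I)$ or $\Gamma_{\mu/N'}(I)$. The fact that $Z$ is a vertex of the smaller graph is immediate from the multidegree computation $\phi_I(Z)=\mu/M'$ (respectively $\mu/N'$). For the sink property I would argue by contradiction: any outgoing edge $Z\to_{\mathcal{G}_1}Z'$ arises from a binomial in $\mathcal{G}_1$ whose marked quadratic $T_uT_v$ divides $Z$, but then $T_uT_v$ also divides $T_{M'}Z=T$ (respectively $T_{N'}Z=T$), so the same element of $\mathcal{G}_1$ reduces $T$ to $T_{M'}Z'$ (respectively $T_{N'}Z'$), violating sink-ness.

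The only step I expect to need any care is this lifting of one-step reductions from the smaller fiber graph up to $\Gamma_\mu(I)$, but it is painless: the applicability of an element of $\mathcal{G}_1$ depends only on divisibility of its marked quadratic, and that divisibility is preserved by multiplying through by the fixed factor $T_{M'}$ or $T_{N'}$. In particular, this also yields the secondary claim in the statement that in the first case every sink is of type $M$, since Lemma~\ref{vertices}(a) already told us every vertex is.
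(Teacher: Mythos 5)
Your proof is correct and follows essentially the same route as the paper: invoke Lemma~\ref{vertices}(a) to conclude all vertices are of type $M$, apply Lemma~\ref{fibergraphmovesforrlex} to force the latest factor of a sink to equal $T_{M'}$ (or $T_{N'}$), and then argue by contradiction that the cofactor must itself be a sink, since any reduction of $Z$ lifts (by multiplying through by the fixed factor) to a reduction of $T$. The only cosmetic difference is that you make the divisibility-based lifting of one-step reductions explicit, which the paper states more tersely.
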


\begin{proof} Recall that whenever there is a directed edge from $T$ to $T'$, we have $T \succ_{rlex} T'$. Additionally, observe that a sink of $\Gamma_{\mu}(I)$ must be reached eventually because there is no infinite descending chain of monomials with respect to a monomial order (see \cite[Lemma 2.1.7]{herzog2011monomial}).

If there is a vertex $T$ of type $M$  in $\Gamma_{\mu}(I)$, then every vertex of $\Gamma_{\mu}(I)$ must be of type $M$ by Lemma \ref{vertices} (a). Thus any sink is of type $M$.  Furthermore, every sink of $\Gamma_{\mu}(I)$ must be divisible by $T_{M'}$. Otherwise, there is an edge directed from a sink by \Cref{fibergraphmovesforrlex} (b), a contradiction.  Hence each sink is of the form $Z_{M'}T_{M'}$ where $Z_{M'}$ is a vertex in $\Gamma_{\frac{\mu}{M'}}(I)$. Finally, the monomial $Z_{M'}$ must be a sink in $\Gamma_{\frac{\mu}{M'}}(I)$. Otherwise, there must be an edge directed from $Z_{M'}$ to another vertex in $\Gamma_{\frac{\mu}{M'}}(I)$, say  $Z'$. Then, $Z_{M'}\rightarrow_{\mathcal{G}_1} Z'$ which in turn implies that $Z_{M'}T_{M'} \rightarrow_{\mathcal{G}_1} T_{M'}Z'$, contradicting the fact that  $Z_{M'}T_{M'}$ is a sink in $\Gamma_{\mu}(I)$. 

If $\Gamma_{\mu}(I)$ has no vertices of type $M$, then $L_N (T)$ exists for each vertex $T$ of $\Gamma_{\mu}(I)$. Furthermore,  every sink of $\Gamma_{\mu}(I)$ must be divisible by $T_{N'}$. Otherwise, it could not be a sink due to \Cref{fibergraphmovesforrlex} (a). Therefore, each sink is of the form $Z_{N'}T_{N'}$ where $Z_{N'}$ is a vertex in $\Gamma_{\frac{\mu}{N'}}(I)$. As in the previous paragraph, one can show that  $Z_{N'}$ must be a sink in $\Gamma_{\frac{\mu}{N'}}(I)$. 
\end{proof}

\begin{remark}
Let $\mu$  be a multidegree. If the degree of $\mu$ is odd, then $\Gamma_{\mu}(I)$ must be empty. Thus, we may assume the degree of $\mu$ is even for the remainder of the paper.
\end{remark}

In the following corollary, we show that a non-empty fiber graph has a unique sink and we provide a greedy description of this unique sink. In summary, the sink is found as follows: set $p_r$ to be $N'$ if $N'$ exists; otherwise, set $p_r= M'$. Let $\mu_{r-1}=\mu / p_r$. Furthermore,  let $M'_{r-1}$ and $N'_{r-1}$ be the smallest elements in $\MB_M$ and $\MB_N$ in revlex order, respectively, dividing $\mu_{r-1}$. Set $p_{r-1}$ equal to $N'_{r-1}$ if it exists; otherwise, set $p_{r-1}=M'_{r-1}$.  One can continue in this fashion by setting $\mu_{i}=\mu / \prod_{j=i+1}^r p_j$ and letting $M'_i$ and $N'_i$ be the smallest elements in $\MB_M$ and $\MB_N$ in revlex order, respectively,  dividing $\mu_{i}$ for each $i \in [r-1]$.

\begin{corollary}\label{cor:uniqueSink}
For any  multidegree $\mu$ of degree $2r$, where $\Gamma_{\mu}(I)$ is non-empty, $\Gamma_{\mu}(I)$  has a unique sink $T= T_{p_1} \cdots T_{p_r}$ satisfying that  $T_{p_i}$  is the least variable in revlex order so that $p_i$ divides $\mu /  \prod_{j=i+1}^r p_j$ for each $i\in [r]$.
\end{corollary}
\begin{proof}
We use induction on  the length of the standard factorization. The base case $r=1$ is immediate. Suppose $r>1$. Then a sink $T= T_{p_1} \cdots T_{p_r}$ in $\Gamma_{\mu}(I)$ is of the form $T=T' ~T_{M'}$ or $T=T' ~T_{N'}$ such that $T'=T_{p_1} \cdots T_{p_{r-1}}$ is a sink in $\Gamma_{\mu_{r-1}} (I)$ by \Cref{sinkreductionrlex} and its proof. Note that $T$ is of the first form, i.e. $p_r=M'$, if $\Gamma_{\mu}(I)$ has a vertex of type $M$ and it is of the second form, i.e. $p_r=N'$, otherwise. It follows from the induction hypothesis that  $T'$ is the unique sink in $\Gamma_{\mu_{r-1}} (I)$ and $T_{p_i}$  is the smallest variable in revlex order so that $p_i$  divides $\mu /  \prod_{j=i+1}^r p_j$ for each $i \in [r-1]$. Since $\Gamma_{\mu_{r-1}} (I)$ has a unique sink, $T$ is the only sink in $\Gamma_{\mu}(I)$. Hence, the statement holds.
\end{proof}

We conclude this subsection by presenting an explicit description of the relations between the indices of the variables that are involved in the standard factorization of a sink. These relations will play a central role in the proof of the main result in \Cref{sec:6}. Furthermore, they can be used to count the number of different fiber graphs for a given degree.

\begin{lemma}\label{TailComparisonRevlex}
(Adopt \Cref{abcd} and \Cref{lowestdivisor}) Let $\Gamma_{\mu}(I)$ be a nonempty fiber graph and $T=T_{p_1}\cdots T_{p_r}$ be its unique sink.  Let $p_{k_1}=x_ix_j$ and $p_{k_2}=x_{i'}x_{j'}$ where $k_1 <k_2$.  
\begin{enumerate}
    \item[(a)] If $p_{k_1}$ and $p_{k_2}$ are both in $\MB_M$ or $\MB_N$, then $i \leq i'$ and $j \leq j'$.
    \item[(b)] If $p_{k_1} \in \MB_M$ and $p_{k_2}\in \MB_N$, then $i\leq i'$ or $c<i$. In particular, we must have $i=j$ or $c<j$  when $i=i'$.
\end{enumerate}
\end{lemma}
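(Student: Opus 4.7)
The plan is to argue in both parts by contradiction. If the stated conclusion fails, I will construct an alternative factorization $p_k p_l = u'v'$ with $u', v' \in \MB_M \cup \MB_N$ for which $p_k, p_l \succ_{rlex} v'$. This produces a binomial $T_{p_k}T_{p_l} - T_{u'}T_{v'} \in \mathcal{G}_1$, and since $T_{p_k}T_{p_l}$ divides $T$, it yields a reduction of $T$, contradicting that $T$ is a sink (in view of \Cref{grobnerrevlex} and the definition of $\Gamma_{\mu}(I)$). Throughout, I use the explicit descriptions $\MB_M = \{x_px_q : p \leq a,\ p \leq q \leq b\}$ and $\MB_N = \{x_px_q : p \leq c,\ b < q \leq d\}$, together with the revlex comparison for degree two monomials: $x_ix_j \succ_{rlex} x_{i'}x_{j'}$ iff either $j < j'$ or ($j = j'$ and $i < i'$), where $i \leq j$ and $i' \leq j'$.

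For part (a), the standard factorization forces $p_k \succeq_{rlex} p_l$, which already gives $j \leq j'$; if $j = j'$ the same comparison yields $i \leq i'$, so it remains to rule out $j < j'$ together with $i > i'$. I will set $u' := x_{i'}x_j$ and $v' := x_ix_{j'}$. If both $p_k, p_l \in \MB_M$, then $i' \leq a$, $j \leq b$, $i \leq a$, $j' \leq b$, so $u', v' \in \MB_M$; the analogous check places both in $\MB_N$ in the other subcase. Since $j < j'$ we get $p_k, p_l$ each have a strictly smaller maximum index than $v'$, hence $p_k, p_l \succ_{rlex} v'$, providing the forbidden reduction.

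For part (b), I will negate the main conclusion, i.e., assume $i > i'$ and $i \leq c$, and reuse the swap $u' = x_{i'}x_j$, $v' = x_ix_{j'}$. Since $i' \leq c < a$ and $j \leq b$, we get $u' \in \MB_M$, and since $i \leq c$ and $b < j' \leq d$, we get $v' \in \MB_N$. The revlex comparisons $p_k = x_ix_j$ vs.\ $v' = x_ix_{j'}$ and $p_l = x_{i'}x_{j'}$ vs.\ $v'$ again give $p_k, p_l \succ_{rlex} v'$, producing the contradicting reduction. For the ``in particular'' clause, I will negate its conclusion under $i = i'$: assume $i < j$ and $j \leq c$, and use the different split $p_k p_l = x_i^2 \cdot x_jx_{j'}$. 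Here $x_i^2 \in \MB_M$ because $i \leq c < a$, and $x_jx_{j'} \in \MB_N$ because $j \leq c$ and $b < j' \leq d$. With $v' := x_jx_{j'}$, both $p_k$ and $p_l$ have smaller maximum index than $v'$ (namely $j$ and $j'$ respectively compared to $j' > b \geq j$, using also that $p_l$ and $v'$ share maximum $j'$ but $p_l$'s other index is $i < j$), so $p_k, p_l \succ_{rlex} v'$ once more.

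The routine part is setting up the inequalities; the main subtlety I want to be careful about is the revlex comparison between $p_l$ and $v'$ in the ``in particular'' case, where both monomials share the same maximum index $j'$ and I must use $i < j$ to conclude $p_l \succ_{rlex} v'$. Verifying that every constructed swap genuinely lands in $\MB_M \cup \MB_N$ (not just in the polynomial ring) is what makes the case analysis on which Borel set the generators come from unavoidable.
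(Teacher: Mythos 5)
Your proposal is correct and follows essentially the same swap-and-contradiction strategy as the paper: in each case you exhibit an alternative factorization $p_kp_l = u'v'$ with $u',v' \in \MB_M \cup \MB_N$ and $p_k,p_l \succ_{rlex} v'$, which puts $T_{p_k}T_{p_l}-T_{u'}T_{v'}$ in $\mathcal{G}_1$ and contradicts $T$ being a sink. One small wording slip in part~(a): when $j<j'$ and $i>i'$, the monomials $p_l=x_{i'}x_{j'}$ and $v'=x_ix_{j'}$ in fact share the same maximum index $j'$, so the comparison $p_l\succ_{rlex} v'$ follows from the tiebreak $i'<i$ rather than a strict inequality of maximum indices --- the very subtlety you correctly flag in the ``in particular'' clause applies here too.
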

\begin{proof} (a) The last inequality $j \leq j'$ follows from our assumption that $p_{k_1} \succeq_{rlex} p_{k_2}$. For the first inequality, by contradiction, suppose $i>i'$. Observe that we must have $j<j'$ as $p_{k_1}\succ_{rlex} p_{k_2}$. Before we proceed further recall that $i\leq j$ and $i' \leq j'$. If $i'=j'$, then $p_{k_2} \in \MB_M$ as the equality of two indices is not possible for monomials in $\MB_N$. Then we obtain $i'< i \leq j \leq j'$ which implies that $i' < j'$, a contradiction. Thus, we must have  $i \leq i'$ in this case. For the remainder of the proof, we may assume that $i'<j'$.  Note that  $x_{i'}x_j$ and $x_ix_{j'}$ are both in $\MB_M$ if $p_{k_1},p_{k_2} \in \MB_M$. Similarly,   they are both in $\MB_N$ if $p_{k_1},p_{k_2} \in \MB_N$. Furthermore, $p_{k_1},p_{k_2} \succ_{rlex}  x_ix_{j'}$. Then $T_{p_{k_1}}T_{p_{k_2}} - T_{i'j}T_{ij'} \in \mathcal{G}_1$, which contradicts to $T$ being a sink. Therefore, $i \leq i'$.

(b) By contradiction, suppose $i'<i\leq c.$ It follows from the definition of strongly stable ideals that $x_{i'}x_j \in \MB_M$ as  $x_{i} x_{j}\in \MB_M$ where $i'<i$. Furthermore, we have $x_{i}x_{j'} \in \MB_N$ as $i\leq c$ and $p_{k_2} \in \MB_N$. Since $p_{k_1},p_{k_2}\succ_{rlex} x_{i}x_{j'}$, we have $T_{p_{k_1}}T_{p_{k_2}} - T_{i'j}T_{ij'} \in \mathcal{G}_1$, a contradiction.

For the last statement, assume that $i=i'$. Since $p_{k_1} \succ_{rlex} p_{k_2},$ it is immediate that $j<j'$. On the contrary, suppose $i < j \leq c$. Then, we have $x_jx_{j'} \in \MB_N$ as $j \leq c$. Moreover,  $x_i^2 \in \MB_M$ since $x_ix_j \in \MB_M$ and $i<j$. Note that $p_{k_1},p_{k_2} \succ_{rlex} x_jx_{j'}$. Thus $T_{p_{k_1}}T_{p_{k_2}} - T_{ii}T_{jj'} \in \mathcal{G}_1$, a contradiction.
\end{proof}

\begin{remark}
As it can be seen from the proof of the above lemma, uniqueness of the sink is not necessary for the lemma to hold.  
\end{remark}

\subsection{The mixed revlex Gr\"obner basis}\label{subsec:Mixed}  Our goal in this subsection is to describe a Gr\"obner basis of $T(I)$ with respect to a new order called \emph{mixed reverse lexicographic order}. We use similar approaches to the ones employed in the previous subsection and describe the differences in the structure of fiber graphs with respect to a new collection $\mathcal{G}_2$.

\begin{definition}\label{def:MonMixRevLex}
Consider all the monomials in $\MB_M \cup \MB_N.$ We say $m \succ_{mrlex} n$  if one of the following hold:
\begin{enumerate}
    \item[(i)] $m \succ_{rlex} n$ and $m,n \in \MB_M$,
    \item[(ii)] $m \in \MB_N$ and $n \in \MB_M$,
     \item[(iii)] $m \succ_{rlex} n$ and $m, n \in \MB_N$.
\end{enumerate}
We say $T_{m}>_{mrlex} T_{n}$ if and only if $m \succ_{mrlex} n$. Let $a,b,c$ and $d$ be given as in \Cref{abcd}. The $T$ variables are ordered as follows:
\begin{align*}
T_{1(b+1)}>_{mrlex}\dots >_{mrlex}T_{c(b+1)}>_{mrlex}\dots >_{mrlex}T_{1d}>_{mrlex}\dots\\
>_{mrlex}T_{cd}>_{mrlex}T_{11}>_{mrlex}T_{12}>_{mrlex}T_{22}>_{mrlex}\dots \\
>_{mrlex}T_{1b}>_{mrlex}T_{2b}>_{mrlex}\dots >_{mrlex}T_{ab}.
\end{align*}
We use the notation $\succ_{mrlex}$ to denote the reverse lexicographic order induced by $>_{mrlex}$ variable order on $R$. We call this new term order the \emph{mixed reverse lexicographic order}.
\end{definition}

Similar to \Cref{lowestdivisor}, we set a standard factorization to express each monomial in $R$ with respect to mixed reverse lexicographic order.   

\begin{notation}\label{Mixlowestdivisor} 
Let $T$ be a monomial in $R$. We express  $T$ as 
$T=(T_{p_1}\cdots T_{p_k})(T_{p_{k+1}} \cdots T_{p_{k+l}})$ such that 
\begin{itemize}
    \item $p_i \in \MB_N$ for each $i \in \{1, \ldots, k\}$ and $p_{k+j} \in \MB_M$ for each $j \in \{1, \ldots, l\}$,  
    \item $p_1  \succeq_{mrlex} \dots \succeq_{mrlex} p_k \succ_{mrlex} p_{k+1} \succeq_{mrlex} \dots \succeq_{mrlex} p_{k+l}$.
\end{itemize} 
We still denote  the latest variable factor of $T$ in $\MB_M$ by $L_M (T)$ and the latest variable factor of $T$ in $\MB_N$ by $L_N (T)$. Based on the above expression of $T$, one has $L_N (T)=T_{p_k}$ and $L_M (T)=T_{p_{k+l}}$ unless $k$ or $l$ is equal to zero. 

 Given a multidegree $\mu$ in $S,$ we use $M'$ and $N'$ to denote the smallest elements in $\MB_M$  and $\MB_N$, respectively, with respect to $\succ_{mrlex}$ order dividing $\mu$.
\end{notation}

The two Hasse diagrams given in \Cref{fig:rlex} and \Cref{fig:mrlex} highlight the differences between the $\succ_{rlex}$ order and  $\succ_{mrlex}$ order. 

\begin{figure}[h]
    \centering
    \begin{minipage}{.5\textwidth}
        \centering

  \begin{tikzpicture}
\tikzstyle{every node}=[]
\node[fill=yellow!50, circle, inner sep=0 pt, minimum size=20 pt] (0) at (2,5) {$x_3^2$};
\node[fill=yellow!50, circle, inner sep=0 pt, minimum size=20 pt] (1) at (3,4) {$x_2x_3$};
\node[fill=yellow!50, circle, inner sep=0 pt, minimum size=20 pt] (2) at (3,3) {$x_2^2$};
\node[fill=yellow!50, circle, inner sep=0 pt, minimum size=20 pt] (3) at (5,3) {$x_1x_3$};
\node[fill=yellow!50, circle, inner sep=0 pt, minimum size=20 pt] (4) at (4,2) {$x_1x_2$};
\node[fill=yellow!50, circle, inner sep=0 pt, minimum size=20 pt] (5) at (4,1) {$x_1^2$};
\node[fill=blue!50,circle,inner sep=3 pt] (6) at (4,5) {$x_2x_4$};
\node[fill=blue!50,circle,inner sep=3 pt] (7) at (5,4) {$x_1x_4$};
\node[fill=blue!50,circle,inner sep=3 pt] (8) at (6,5) {$x_1x_5$};
\node[fill=blue!50,circle,inner sep=3 pt] (9) at (5,6) {$x_2x_5$};
\node[left] at (1.west) {$T_4$};
\node[left] at (2.west) {$T_2$};
\node[right] at (3.east) {$T_3$};
\node[right] at (4.east) {$T_1$};
\node[right] at (5.east) {$T_0$};
\node[left] at (0.west) {$T_5$};
\node[left] at (6.west) {$T_7$};
\node[right] at (7.east) {$T_6$};
\node[right] at (8.east) {$T_8$};
\node[right] at (9.east) {$T_9$};

\node at (2,5.6) {$M$};
\node at (5,6.8) {$N$};

\draw (0)--(1)--(2)--(4)--(5);
\draw (6)--(1)--(3)--(4);
\draw (6)--(7)--(3);
\draw (9)--(8)--(7);
\draw (9)--(6);

\end{tikzpicture}
\caption{ $\succ_{rlex}$  order}\label{fig:rlex}
    \end{minipage}%
    \begin{minipage}{0.5\textwidth}
        \centering
  \begin{tikzpicture}
\tikzstyle{every node}=[]
\node[fill=green!30, circle, inner sep=0 pt, minimum size=20 pt] (0) at (2,5) {$x_3^2$};
\node[fill=green!30, circle, inner sep=0 pt, minimum size=20 pt] (1) at (3,4) {$x_2x_3$};
\node[fill=green!30, circle, inner sep=0 pt, minimum size=20 pt] (2) at (3,3) {$x_2^2$};
\node[fill=green!30, circle, inner sep=0 pt, minimum size=20 pt] (3) at (5,3) {$x_1x_3$};
\node[fill=green!30, circle, inner sep=0 pt, minimum size=20 pt] (4) at (4,2) {$x_1x_2$};
\node[fill=green!30, circle, inner sep=0 pt, minimum size=20 pt] (5) at (4,1) {$x_1^2$};
\node[fill=pink!70,circle,inner sep=3 pt] (6) at (4,5) {$x_2x_4$};
\node[fill=pink!70,circle,inner sep=3 pt] (7) at (5,4) {$x_1x_4$};
\node[fill=pink!70,circle,inner sep=3 pt] (8) at (6,5) {$x_1x_5$};
\node[fill=pink!70,circle,inner sep=3 pt] (9) at (5,6) {$x_2x_5$};
\node[left] at (1.west) {$T_8$};
\node[left] at (2.west) {$T_6$};
\node[right] at (3.east) {$T_7$};
\node[right] at (4.east) {$T_5$};
\node[right] at (5.east) {$T_4$};
\node[left] at (0.west) {$T_9$};
\node[left] at (6.west) {$T_1$};
\node[right] at (7.east) {$T_0$};
\node[right] at (8.east) {$T_2$};
\node[right] at (9.east) {$T_3$};

\node at (2,5.6) {$M$};
\node at (5,6.8) {$N$};

\draw (0)--(1)--(2)--(4)--(5);
\draw (6)--(1)--(3)--(4);
\draw (6)--(7)--(3);
\draw (9)--(8)--(7);
\draw (9)--(6);

\end{tikzpicture}
\caption{$\succ_{mrlex}$  order}\label{fig:mrlex}
    \end{minipage}
\end{figure}

The main result of this section describes a Gr\"obner basis of $T(I)$ with respect to  the mixed reverse lexicographic order, $\succ_{mrlex}$, on $R$. 

\begin{theorem}\label{grobnermixedrevlex}
Let $\mathcal{G}_2=\{\underline{T_{u}T_{v}}-T_{u'}T_{v'}: uv=u'v' \text{ and } u,v \succ_{mrlex} v'\}$. Then $\mathcal{G}_2$ is a Gr\"obner basis of $T(I)$ with respect to the mixed reverse lexicographic order $\succ_{mrlex}$.
\end{theorem}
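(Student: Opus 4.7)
The plan is to mirror the fiber-graph strategy used in the proof of Theorem~\ref{grobnerrevlex}. By Corollary~\ref{fibergraphdefinesGB}, it suffices to show that every nonempty fiber graph $\Gamma_\mu(I)$ with respect to $\mathcal{G}_2$ admits a unique sink. Because $\succ_{mrlex}$ places every $\MB_N$-variable strictly above every $\MB_M$-variable (the reverse of $\succ_{rlex}$), the roles of $M'$ and $N'$ are interchanged: the sink of $\Gamma_\mu(I)$ will prefer to contain $T_{M'}$ whenever any $\MB_M$-factor is present in the fiber, and will contain $T_{N'}$ only when every vertex is of type $N$.

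First I would establish the movement lemma analogous to Lemma~\ref{fibergraphmovesforrlex}, using the standard factorization of Notation~\ref{Mixlowestdivisor}. For a vertex $T$ of $\Gamma_\mu(I)$:
\begin{enumerate}
\item[(a)] if $L_M(T)$ exists and $L_M(T)\neq T_{M'}$, then there is an edge $T\longrightarrow_{\mathcal{G}_2} T'$ with $L_M(T)>_{mrlex}L_M(T')\geq_{mrlex}T_{M'}$;
\item[(b)] if $L_M(T)$ does not exist while $L_N(T)$ exists and $L_N(T)\neq T_{N'}$, then there is an edge $T\longrightarrow_{\mathcal{G}_2}T'$ with $L_N(T)>_{mrlex}L_N(T')\geq_{mrlex}T_{N'}$.
\end{enumerate}
The proofs mirror those of Lemma~\ref{fibergraphmovesforrlex}: since $M'$ (resp.\ $N'$) divides $\mu$, some factor $T_u$ in $\supp(T)$ must contribute a variable $x_{i'}$ or $x_{j'}$ of $M'$ (resp.\ $N'$) that is missing from $L_M(T)$ (resp.\ $L_N(T)$), and strong stability of $I$ supplies the swap $u\cdot L_M(T)=u'\cdot v'$ (resp.\ with $L_N$) as an element of $\mathcal{G}_2$. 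The lower bound $L_M(T')\geq_{mrlex}T_{M'}$ is automatic, because the new factor $v'$ still divides $\mu$ and $T_{M'}$ is by definition the $\succ_{mrlex}$-minimum $\MB_M$-element dividing $\mu$.

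Next, combining this movement lemma with Lemma~\ref{vertices}, I would prove an analogue of Lemma~\ref{sinkreductionrlex}: if $\Gamma_\mu(I)$ contains a vertex of type $N$, then by Lemma~\ref{vertices}(b) every vertex is of type $N$, and by part~(b) above every sink is divisible by $T_{N'}$, hence has the form $T_{N'} Z_{N'}$ for some sink $Z_{N'}$ of $\Gamma_{\mu/N'}(I)$; otherwise every vertex has some $\MB_M$-factor, and by part~(a) every sink is of the form $T_{M'} Z_{M'}$ for some sink $Z_{M'}$ of $\Gamma_{\mu/M'}(I)$. Uniqueness of the sink of $\Gamma_\mu(I)$ then follows by induction on $\deg\mu$, exactly as in the proof of Corollary~\ref{cor:uniqueSink}.

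The main obstacle will be verifying the orientation of the swap binomial under $\succ_{mrlex}$ in part~(a). Because the cross-ordering between $\MB_M$ and $\MB_N$ has been inverted compared to $\succ_{rlex}$, one must check, in each sub-case of the swap (according to whether the partner factor $T_u$ lies in $\MB_M$ or $\MB_N$, and whether the mismatched index of $L_M(T)$ occurs at the first or second position), that $T_uT_{L_M(T)}$ remains the $\succ_{mrlex}$-leading monomial of the binomial in $\mathcal{G}_2$ so that the reduction $T\to T'$ is actually oriented in the desired direction. Once this case analysis is carried out, the strict decrease of $L_M(T)$ in $\succ_{mrlex}$ follows from the same strong-stability reasoning as in Lemma~\ref{fibergraphmovesforrlex}, and the remainder of the argument is structurally identical to Subsection~\ref{subsec:RevLex}.
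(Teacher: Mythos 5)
Your proposal is correct and takes essentially the same route as the paper: reduce to showing every nonempty fiber graph has a unique sink (Corollary~\ref{fibergraphdefinesGB}), prove the movement lemma with $L_M$ and $L_N$ playing the roles that $L_N$ and $L_M$ played under $\succ_{rlex}$ (this is exactly the paper's Lemma~\ref{fibergraphmovesformrlex}), deduce the sink-reduction lemma from it together with Lemma~\ref{vertices}(b), and conclude by induction on $\deg\mu$. The orientation concern you raise in part~(a) does indeed work out (the potential cross-type case $u\in\MB_N$, $u'\in\MB_M$ cannot occur because the second index of $u$ exceeds $b$), and the one place the paper itself flags a genuine difference is part~(b), where $L_M(T)$ not existing forces the partner $u$ to lie in $\MB_N$ rather than $\MB_M$.
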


\begin{proof} Analogous to the proof of \Cref{grobnerrevlex}, it suffices to show that, for each multidegree $\mu$, the fiber graph $\Gamma_{\mu}(I)$ with respect to $\mathcal{G}_2$ is empty or it has a unique sink by \Cref{fibergraphdefinesGB}. As discussed in the proof of \Cref{grobnerrevlex}, when $\mu$ has an odd degree, its fiber graph is empty. It is possible that the fiber graph of $\mu$ is empty when its degree is even. In these two cases, there is nothing to prove.  The remaining situation is proved in \Cref{cor:uniqueSink2}.
\end{proof}

The following lemmas are analogous versions of \Cref{fibergraphmovesforrlex} and  \Cref{sinkreductionrlex} in the mixed reverse lexicographic order.

\begin{lemma}\label{fibergraphmovesformrlex} 
Let $\mu$ be a multidegree and  $T$ be a vertex of $\Gamma_{\mu}(I)$.  
 \begin{enumerate}[(a)]
    \item If $L_M (T)$ exists and $L_M (T) \neq T_{M'}$ then there is an edge from $T$ to to a vertex of $\Gamma_{\mu}(I)$, $T'$, such that 
    $$L_M (T) >_{mrlex}  L_M (T') =  T_{M'}.$$
    \item Suppose $L_M (T)$ does not exist. If $L_N (T)$ exists and $L_N (T) \neq T_{N'}$, then there is an edge from $T$ to $T'$ such that 
    $$L_N (T) >_{mrlex}  L_N (T') =  T_{N'}.$$
\end{enumerate}
\end{lemma}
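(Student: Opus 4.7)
The proof will closely parallel that of Lemma \ref{fibergraphmovesforrlex}, with the roles of $\MB_M$ and $\MB_N$ interchanged. This swap is dictated by the definition of $\succ_{mrlex}$: generators of $\MB_N$ now sit \emph{above} generators of $\MB_M$, so in the standard factorization of Notation \ref{Mixlowestdivisor} the $\MB_M$ factors occupy the tail. Consequently, the ``tail'' (and hence last-reducible) variable is $L_M(T)$ when it exists, and only falls back to $L_N(T)$ when $T$ is supported entirely on $\MB_N$.

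For part (a), write $L_M(T) = T_{ij}$ and $M' = x_{i'}x_{j'}$, both in $\MB_M$. The assumption $L_M(T) \neq T_{M'}$ forces $i<i'$ or $i=i'$ and $j<j'$. In the case $i<i'$, I would locate $T_u \in \supp(T)$ with $x_{i'}\mid u$ (such a $T_u$ exists because $M'\mid\mu$), and verify $T_uT_{ij}\mid T$ by the same case split used in the revlex proof: the only delicate case is $u=x_ix_j$, which forces $i'=j$, and then the divisibility of $\mu$ by $x_{j'}$ produces a second factor $T_v\in\supp(T)$. Here I use that $T_{ij}=L_M(T)$ is the very last factor in the $\succ_{mrlex}$ standard factorization, so $T_v\succeq_{mrlex} T_{ij}$, and the argument of Lemma \ref{fibergraphmovesforrlex}(a) rules out $T_v = T_{ij}$. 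Since $\MB(M)$ is itself strongly stable, $u' := x_i(u/x_{i'}) \in \MB_M$ (note that $u\in \MB_M$ automatically, since having $x_{i'}\mid u$ with $i'\le a\le b$ and being a factor of a vertex in $\Gamma_\mu(I)$ forces $u\in\MB_M$), and $x_{i'}x_j\in\MB_M$ because $x_{i'}x_{j'}\in\MB_M$ and $j\le j'$. Hence $T_uT_{ij}-T_{u'}T_{i'j}\in \mathcal{G}_2$, producing an edge $T\to_{\mathcal{G}_2} T'$ with $L_M(T')=T_{i'j}$ satisfying the required inequality. The case $i=i'$, $j<j'$ is symmetric.

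For part (b), the hypothesis that $L_M(T)$ does not exist means every factor of $T$ lies in $\MB_N$. The desired reduction is then obtained exactly as in the proof of Lemma \ref{fibergraphmovesforrlex}(a), with $\MB_N$ playing its own role and $L_N(T)$ now being the tail of the standard factorization. No new geometric content is required.

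The main obstacle I anticipate is the bookkeeping step of verifying that the replacement monomials $u'$ and $x_{i'}x_j$ remain in $\MB_M$ (in part (a)) or in $\MB_N$ (in part (b)), rather than crossing into the other class, which would yield a binomial in $\mathcal{G}_2$ but with the wrong leading term under $\succ_{mrlex}$ and invalidate the reduction. This is handled by noting that the index shifts all occur within the bounds $\{1,\dots,a\}\times\{1,\dots,b\}$ for $\MB_M$ (respectively $\{1,\dots,c\}\times\{b+1,\dots,d\}$ for $\MB_N$), and that $\MB(M)$ and $\MB(N)$ are each individually strongly stable. Once this check is performed, the remainder is a direct transcription of the revlex argument.
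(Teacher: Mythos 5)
Your part (b), and the overall template — swap $\MB_M\leftrightarrow\MB_N$ as dictated by the $\succ_{mrlex}$ factorization — are both correct, and match the paper's (very terse) proof. But there is a genuine gap in part (a), at the line ``the argument of Lemma~\ref{fibergraphmovesforrlex}(a) rules out $T_v = T_{ij}$.''

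In Lemma~\ref{fibergraphmovesforrlex}(a) the assumption $T_u = T_{ij}$ (hence $u = x_ix_j$ and $i' = j$) is driven to a contradiction as follows: the second factor $T_v$ with $x_{j'}\mid v$ satisfies $T_v \succeq_{rlex} T_{ii'}$, which forces $j'\leq i'$, and combined with $i'\leq j'$ gives $i'=j'$ — impossible because $x_{i'}x_{j'}\in\MB_N$ has $i'\leq c < b < j'$. That contradiction is \emph{specific to $\MB_N$}, whose monomials are never squares. When you transport the argument to part (a) of the present lemma, the same deductions again give $i'=j'$, but now $M'=x_{i'}^2$ is a perfectly legitimate element of $\MB_M$, and nothing is ruled out. (Moreover, if $T_v$ lies in $\MB_N$ then the comparison $T_v\succeq_{mrlex}T_{ij}$ carries no content at all, since every $\MB_N$-variable beats every $\MB_M$-variable in $>_{mrlex}$.) So the sub-case where the only factor of $T$ divisible by $x_{i'}$ is $L_M(T)$ itself genuinely occurs, and your write-up does not produce a reduction for it. This is exactly the sub-case that the paper's proof of Lemma~\ref{fibergraphmovesforrlex}(b) — the $L_M$ half of the rlex lemma — handles explicitly: one deduces $i'=j=j'$, so $M'=x_{i'}^2\mid\mu$, so $\mu$ has $x_{i'}$-multiplicity at least two, whence a second factor divisible by $x_{i'}$ exists in $T$ (possibly $T_{ij}$ itself, with $T_{ij}^2\mid T$). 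Your part (a) must replicate that handling, not the $\MB_N$ contradiction of part (a) of the rlex lemma.

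A secondary issue: the parenthetical ``$u\in\MB_M$ automatically, since $x_{i'}\mid u$ with $i'\leq a\leq b$ \ldots forces $u\in\MB_M$'' is false. Nothing excludes $i'\leq c$, and then $u$ may perfectly well lie in $\MB_N$ with $x_{i'}$ as its small index. This turns out to be harmless — the binomials in $\mathcal{G}_2$ are allowed to mix the two regions, and the strongly stable reduction $u'=x_i(u/x_{i'})$ remains in $\MB(M,N)$ in any case — but your stated conclusion $u'\in\MB_M$ can fail, and the correct justification is simply that $\MB(M,N)$ is strongly stable.
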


\begin{proof}
The proof of this lemma is analogous to that of  \Cref{fibergraphmovesforrlex} with the following difference. For the proof of part (b), one can find a monomial $u$ such that $T_uL_N(T)$ divides $T$. This monomial $u$ has to be in $\MB_N$ since $L_M (T)$ does not exist. Depending on our conditions, we can create a monomial $u'$ as in the proof of \Cref{fibergraphmovesforrlex} and this monomial $u'$ must be in $\MB_N$. 
\end{proof}

\begin{lemma}\label{sinkreductionmrlex}
Let $\mu$ be a multidegree such that $\Gamma_{\mu}(I)$ is not  empty.    If $\Gamma_{\mu}(I)$ has a vertex of type $N$, then any sink of $\Gamma_{\mu}(I)$ is of type $N$. In particular, each sink is of the form $Z_{N'}T_{N'}$ where $Z_{N'}$ is a sink of $\Gamma_{\frac{\mu}{N'}}(I)$. Otherwise, every sink of $\Gamma_{\mu}(I)$ is of the form $Z_{M'}T_{M'}$ where $Z_{M'}$ is  a sink of $\Gamma_{\frac{\mu}{M'}}(I)$.

\end{lemma}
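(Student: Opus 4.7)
The plan is to mirror the proof of \Cref{sinkreductionrlex}, exchanging the roles of $M$ and $N$ to reflect that, in the mixed order $\succ_{mrlex}$, the $\MB_N$-type generators precede the $\MB_M$-type ones in the standard factorization of \Cref{Mixlowestdivisor}. The two main tools will be \Cref{vertices} (to force uniformity of type whenever one vertex is pure) and \Cref{fibergraphmovesformrlex} (to produce a witness outgoing edge whenever the relevant ``latest'' factor of a vertex is not yet minimal). As in the revlex case, I will also use the general fact that a sink must exist because there is no infinite descending chain with respect to a monomial order.

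First I would split into two cases according to whether $\Gamma_\mu(I)$ contains a vertex of type $N$. In the first case, \Cref{vertices}(b) implies that every vertex is of type $N$, so $L_M(T)$ does not exist for any vertex $T$. For a sink $T$, if $L_N(T)\neq T_{N'}$, then \Cref{fibergraphmovesformrlex}(b) produces an outgoing edge from $T$, contradicting the sink assumption. Hence $T_{N'}$ divides $T$, and I can write $T=T_{N'}Z_{N'}$ for a vertex $Z_{N'}\in \Gamma_{\mu/N'}(I)$.

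In the second case, $\Gamma_\mu(I)$ has no type $N$ vertex, and I need the observation that in the mrlex standard factorization every such vertex must have $l\geq 1$, so $L_M(T)$ exists for every vertex $T$. Applying \Cref{fibergraphmovesformrlex}(a) to any sink $T$, one sees that $L_M(T)=T_{M'}$, hence $T=T_{M'}Z_{M'}$ with $Z_{M'}\in \Gamma_{\mu/M'}(I)$. In both cases, the final step that $Z$ is itself a sink is identical to the corresponding step in \Cref{sinkreductionrlex}: any reduction $Z\to_{\mathcal{G}_2} Z'$ would lift to $T_\bullet Z\to_{\mathcal{G}_2} T_\bullet Z'$, contradicting the fact that $T_\bullet Z$ was a sink.

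The only non-routine part of this argument, and therefore the main thing to verify carefully, is that the dichotomy needed for \Cref{fibergraphmovesformrlex} really matches the case split ``type $N$ everywhere / $L_M$ exists everywhere.'' This is where the mrlex-specific ordering enters: because $\MB_N$ factors sit on top in the standard factorization of \Cref{Mixlowestdivisor}, absence of a pure-$N$ vertex is equivalent to every vertex admitting at least one $\MB_M$ factor, which is precisely the hypothesis needed to invoke \Cref{fibergraphmovesformrlex}(a). Once this bookkeeping is checked, the rest of the argument is routine.
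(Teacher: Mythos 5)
Your proposal is correct and takes exactly the approach the paper intends: the paper only states that the proof of \Cref{sinkreductionmrlex} is ``completely analogous to \Cref{sinkreductionrlex},'' and your write-up is precisely that analogy made explicit, swapping the roles of $M$ and $N$ to match \Cref{Mixlowestdivisor}, invoking \Cref{vertices}(b) and \Cref{fibergraphmovesformrlex} in place of \Cref{vertices}(a) and \Cref{fibergraphmovesforrlex}, and reusing the lifting argument for the final step. No gaps.
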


The proof of \Cref{sinkreductionmrlex} is completely analogous to  \Cref{sinkreductionrlex}. Similar to \Cref{cor:uniqueSink}, we conclude the following  for the mixed revlex order.

\begin{corollary}\label{cor:uniqueSink2} 
For any  multidegree $\mu$ of degree $2r$, where $\Gamma_{\mu}(I)$ is non-empty, $\Gamma_{\mu}(I)$  has a unique sink $T= T_{p_1} \cdots T_{p_r}$ satisfying that $T_{p_i}$ is the smallest variable in mixed revlex order so that $p_i$ divides $\mu_i=\mu /\prod_{j=i+1}^rp_j$.
\end{corollary}

As in the previous subsection, we conclude the current subsection by presenting an explicit description of the relations between the indices of the variables that are involved in the standard factorization of a sink with respect to mixed reverse lexicographic order.

\begin{lemma}\label{TailComparisonMixRevlex}
Let $\Gamma_{\mu}(I)$ be a nonempty fiber graph and $T=T_{p_1}\cdots T_{p_r}$ be its unique sink.  Let $p_{k_1}=x_ix_j$ and $p_{k_2}=x_{i'}x_{j'}$ where $k_1 <k_2$.

\begin{enumerate} 
    \item[(a)] If $p_{k_1}$  and $p_{k_2}$ are both in $\MB_M$ or $\MB_N$, then $i \leq i'$ and $j \leq j'$.
    \item[(b)]  If $p_{k_1} \in \MB_N$  and $p_{k_2} \in \MB_M$, then $i \leq i'$. 
\end{enumerate}
\end{lemma}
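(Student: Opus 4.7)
The plan is to adapt the strategy used in the proof of the reverse-lex analogue \Cref{TailComparisonRevlex} to the mixed reverse-lex order. In both parts I will argue by contradiction: assuming the desired inequality fails, I will exhibit a binomial of $\mathcal{G}_2$ of the form $T_{p_k}T_{p_l}-T_{x_{i'}x_j}T_{x_ix_{j'}}$ whose leading term $T_{p_k}T_{p_l}$ divides $T$, contradicting the fact that $T$ is a sink of $\Gamma_\mu(I)$. The core tasks are therefore (i) showing that the swapped monomials $x_{i'}x_j$ and $x_ix_{j'}$ still belong to $\MB_M \cup \MB_N$ in the correct classes, and (ii) verifying that $x_ix_{j'}\prec_{mrlex}p_k$ and $x_ix_{j'}\prec_{mrlex}p_l$ so that the binomial indeed lies in $\mathcal{G}_2$ with the required leading term.

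For part (a), since $p_k$ and $p_l$ lie in the same Borel class, $\succ_{mrlex}$ restricts to $\succ_{rlex}$ on the variables indexed by that class, so $p_k \succeq_{mrlex} p_l$ from the standard factorization immediately yields $j\leq j'$. Assuming $i>i'$ then forces $j<j'$ (since $j=j'$ together with $i>i'$ would give $p_l \succ_{rlex} p_k$, contradicting the standard factorization). Strong stability of the common class guarantees that both $x_{i'}x_j$ and $x_ix_{j'}$ remain in that class. A direct reverse-lex comparison at the latest differing variable then gives $x_ix_{j'}\prec_{rlex}p_k$ (using $j<j'$) and $x_ix_{j'}\prec_{rlex}p_l$ (using $i>i'$), which is exactly what is needed.

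For part (b), write $p_k=x_ix_j\in\MB_N$, so $i\leq c$ and $j>b$, and $p_l=x_{i'}x_{j'}\in\MB_M$, so $i'\leq a$ and $j'\leq b$, and assume for contradiction that $i>i'$. Then $x_{i'}x_j\in\MB_N$ since $i'<i\leq c$ and $j>b$. To see $x_ix_{j'}\in\MB_M$, note that both of its indices lie in $[1,b]$; the smaller one is at most $a$ because when $i\leq j'$ one has $\min=i\leq c\leq a$, and when $j'<i$ one has $j'<i\leq c\leq a$. Now the crucial feature of the mixed order kicks in: every $\MB_M$ generator is strictly smaller than every $\MB_N$ generator under $\succ_{mrlex}$, so $x_ix_{j'}\prec_{mrlex}p_k$ is automatic. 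Within $\MB_M$, comparing $x_ix_{j'}$ with $p_l=x_{i'}x_{j'}$ at the latest differing variable $x_i$ (with $i>i'$) gives $x_ix_{j'}\prec_{rlex}p_l$, hence $\prec_{mrlex}p_l$, producing the desired contradicting binomial.

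The main obstacle I expect is the case analysis showing $x_ix_{j'}\in\MB_M$ when $j'<i$ in part (b); this step genuinely chains the inequalities $j'<i$, $i\leq c$, and $c\leq a$ to conclude $j'<a$. Beyond this the proof is structurally cleaner than that of \Cref{TailComparisonRevlex}, because cross-class comparisons in $\succ_{mrlex}$ become automatic, which also explains why the mixed-order statement needs no analogue of the auxiliary ``$c<i$'' caveat appearing in \Cref{TailComparisonRevlex}(b).
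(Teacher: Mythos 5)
Your proof is correct and takes essentially the same approach as the paper: in part (a) you unpack the reverse-lex argument that the paper simply cites from \Cref{TailComparisonRevlex} (valid since $\succ_{mrlex}$ restricts to $\succ_{rlex}$ within each Borel class), and in part (b) you construct exactly the same reducing binomial $T_{p_k}T_{p_l}-T_{x_{i'}x_j}T_{x_ix_{j'}}\in\mathcal{G}_2$ as the paper does, with the index checks for $x_{i'}x_j\in\MB_N$ and $x_ix_{j'}\in\MB_M$ made explicit.
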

\begin{proof} (a) Notice that when  $p_{k_1}$ and $p_{k_2}$ are both in $\MB_N$ or $\MB_M$, $\succ_{rlex}$ and $\succ_{mrlex}$ orders coincide. Thus the proof is obtained from \Cref{TailComparisonRevlex}.

(b) Suppose $p_{k_1} \in \MB_N$  and $p_{k_2} \in \MB_M$.  If $i >i'$, then $x_i x_{j'} \in \MB_M$ and $x_{i'} x_j \in \MB_N$ which can be easily verified by checking the indices. Note that $p_{k_1}, p_{k_2} \succ_{mrlex} x_ix_{j'}$. Then $T_{p_{k_1}} T_{p_{k_2}} - T_{i'j} T_{ij'} \in \mathcal{G}_2$ implying that $T$ is not a sink, a contradiction.
\end{proof}

\begin{remark}
Similar to \Cref{TailComparisonRevlex}, uniqueness of the sink is not necessary for the lemma to hold.  
\end{remark}

\section{Gr\"obner Bases of the Multi-Rees Algebras of Strongly Stable Ideals}\label{sec:6}

In this section, we provide a quadratic Gr\"obner basis for the defining ideal of the multi-Rees algebra $\mathcal{R}(I_1 \oplus \cdots \oplus I_r)$ when $r=2$ and each ideal is a strongly stable ideal with two quadratic Borel generators. As a result of \Cref{multireesisfibertype}, it suffices to find a quadratic Gr\"obner basis for the toric ideal associated to its special fiber. Similar to \Cref{sec:5}, we follow an approach that involves fiber graphs of multidegrees with respect to a collection of marked binomials.

We begin by setting our notation and introducing our objects. 
 
\begin{notation}\label{I1I2}
We set $I_1=\MB (M_1,N_1)$ and $I_2=\MB(M_2,N_2)$ where  $M_i=x_{a_i}x_{b_i}$ and  $N_i=x_{c_i}x_{d_i}$ for $i=1,2$. Note that $c_i<a_i \leq b_i <d_i$ for each $i$. Additionally, we assume $d_1 \leq d_2$. As in the previous section, we denote the list of the minimal monomial generators of $\MB (M_i)$ by  $\MB_{M_i}$ and the list of the minimal monomial generators of $\MB (N_i) \setminus \MB (M_i)$ by $\MB_{N_i}$ for each $i$.
\end{notation}

\begin{example}
 \Cref{B(M_i)} shows the regions defined in \Cref{I1I2}.
 
 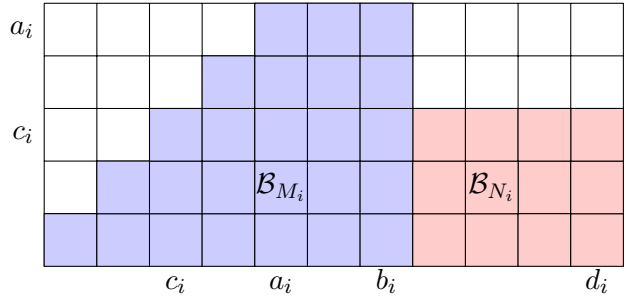
\begin{figure}[h]
\begin{center}
    	\begin{tikzpicture}[scale=0.7]
	
	\begin{scope}[greennode/.style={circle, draw=green!60, fill=green!5}, bluenode/.style={circle, draw=blue!60, fill=blue!5}, rednode/.style={circle, draw=red!60, fill=red!5}]
	\end{scope}
	
\draw[step=1.0,black] (0,0) grid (11,5);

\node[left] at (0,2.5) {$c_i$};
\node[left] at (0,4.5) {$a_i$};

\node[above] at (2.5,-0.7) {$c_i$};
\node[above] at (4.5,-0.7) {$a_i$};
\node[above] at (6.5,-0.7) {$b_i$};
\node[above] at (10.5,-0.7) {$d_i$};

\filldraw[fill=blue!20!white, draw=black] (0,0) rectangle (1,1);
\filldraw[fill=blue!20!white, draw=black] (1,0) rectangle (2,1);
\filldraw[fill=blue!20!white, draw=black] (2,0) rectangle (3,1);
\filldraw[fill=blue!20!white, draw=black] (3,0) rectangle (4,1);
\filldraw[fill=blue!20!white, draw=black] (4,0) rectangle (5,1);
\filldraw[fill=blue!20!white, draw=black] (5,0) rectangle (6,1);
\filldraw[fill=blue!20!white, draw=black] (6,0) rectangle (7,1);
\filldraw[fill=red!20!white, draw=black] (7,0) rectangle (8,1);
\filldraw[fill=red!20!white, draw=black] (8,0) rectangle (9,1);
\filldraw[fill=red!20!white, draw=black] (9,1) rectangle (10,2);
\filldraw[fill=red!20!white, draw=black] (9,0) rectangle (10,1);
\filldraw[fill=blue!20!white, draw=black] (1,1) rectangle (2,2);
\filldraw[fill=blue!20!white, draw=black] (2,1) rectangle (3,2);
\filldraw[fill=blue!20!white, draw=black] (3,1) rectangle (4,2);

\fill[fill=blue!20!white, draw=black] (4,1) rectangle (5,2) (4.5,1.5) node {$\MB_{M_i}$};

\filldraw[fill=blue!20!white, draw=black] (5,1) rectangle (6,2);

\filldraw[fill=red!20!white, draw=black] (10,0) rectangle (11,1);
\filldraw[fill=red!20!white, draw=black] (10,1) rectangle (11,2);
\filldraw[fill=red!20!white, draw=black] (10,2) rectangle (11,3);

\filldraw[fill=blue!20!white, draw=black] (6,1) rectangle (7,2);
\filldraw[fill=red!20!white, draw=black] (7,1) rectangle (8,2); 

\filldraw[fill=red!20!white, draw=black] (8,1) rectangle (9,2) (8.5,1.5) node {$\MB_{N_i}$};

\filldraw[fill=red!20!white, draw=black] (7,2) rectangle (8,3); 

\filldraw[fill=red!20!white, draw=black] (8,2) rectangle (9,3);
\filldraw[fill=red!20!white, draw=black] (9,2) rectangle (10,3);

\filldraw[fill=blue!20!white, draw=black] (2,2) rectangle (3,3);
\filldraw[fill=blue!20!white, draw=black] (3,2) rectangle (4,3);
\filldraw[fill=blue!20!white, draw=black] (4,2) rectangle (5,3);
\filldraw[fill=blue!20!white, draw=black] (5,2) rectangle (6,3);
\filldraw[fill=blue!20!white, draw=black] (6,2) rectangle (7,3);
\filldraw[fill=blue!20!white, draw=black] (3,3) rectangle (4,4);
\filldraw[fill=blue!20!white, draw=black] (4,3) rectangle (5,4);
\filldraw[fill=blue!20!white, draw=black] (5,3) rectangle (6,4);
\filldraw[fill=blue!20!white, draw=black] (6,3) rectangle (7,4);
\filldraw[fill=blue!20!white, draw=black] (4,4) rectangle (5,5);
\filldraw[fill=blue!20!white, draw=black] (5,4) rectangle (6,5);
\filldraw[fill=blue!20!white, draw=black] (6,4) rectangle (7,5);

	\end{tikzpicture}

\caption{The regions for $I_i=\mathcal{B}(x_{a_i}x_{b_i},x_{c_i}x_{d_i})$ with $c_i<a_i \leq b_i<d_i$}    
\label{B(M_i)}
    \end{center}
\end{figure}

\end{example}

Recall that the defining ideal of $\mathcal{R}(I_1 \oplus I_2)$ is the kernel of the following $S$-algebra homomorphism given by
\[\begin{array}{rcl} \varphi:S[T_u,Z_v: u \in \mathcal{B}_{M_1}\cup \MB_{N_1}, v \in \mathcal{B}_{M_2}\cup \MB_{N_2}] & \rightarrow & S[t,z]=\mathbb{K}[x_1,\dots, x_n,t,z] \\ T_u & \rightarrow & ut \\ Z_v & \rightarrow & vz, \end{array} \] 
and extended algebraically. The defining ideal of the special fiber $\mathcal{F} (I_1\oplus I_2)$ is the kernel of $\varphi '$, the induced $\mathbb{K}$-algebra homomorphism of $\varphi$,  and it is denoted by  $T(I_1\oplus I_2)$. 

Note that the ring $R=\mathbb{K}[T_u,Z_v:u \in \mathcal{B}_{M_1}\cup \MB_{N_1}, v \in \mathcal{B}_{M_2}\cup \MB_{N_2}] $ inherits the multigrading from $S[t,z]$ where $R= \bigoplus_{\mu} R_{\mu}$ with $\mu$ ranging over monomials of $S[t,z]$ and $R_{\mu}$ being the $\KK$-vector space described as
$$R_{\mu} = \spa_{\mathbb{K}} \{ V=\prod T_u^{a_u} Z_v^{b_v} \in R ~:~ \varphi(V)=\mu \}.$$

In what follows, we introduce a  collection  of marked binomials $\mathcal{G}$  to study fiber graphs of multidegrees with respect to $\mathcal{G}$. In particular, we prove that $\mathcal{G}$ is a Gr\"obner basis of $T(I_1\oplus I_2)$ with respect to the following monomial order on $R$.

\begin{definition}\label{def:02order}
We define the \emph{head and tail order}, denoted by $\succ_{ht}$, on $R$ as follows. We first introduce the following variable order, denoted by $>_{ht}$, on $R$.

\begin{itemize}
    \item If $u,v\in \mathcal{B}_{M_1}\cup \MB_{N_1}$, and $u \succ_{rlex} v$, then $T_u >_{ht} T_{v}$.
    \item If $u \in \mathcal{B}_{M_1}\cup \MB_{N_1}$ and  $v \in \mathcal{B}_{M_2}\cup \MB_{N_2},$ then $T_u >_{ht} Z_v$.
    \item If $u,v \in \mathcal{B}_{M_2}\cup \MB_{N_2}$, and $u \succ_{mrlex} v$, then $Z_u >_{ht} Z_{v}$. 
\end{itemize}
In particular, we order the variables of $R$ in the following way.
\begin{align*}
T_{11} >_{ht}T_{12}>_{ht} T_{22} >_{ht} \dots >_{ht} T_{1b_{1}} >_{ht} T_{2b_{1}} >_{ht} \dots>T_{a_{1}b_{1}}>_{ht}\\
T_{1(b_{1}+1)} >_{ht} \dots >_{ht} T_{c_{1}(b_{1}+1)} >_{ht} \dots >_{ht} T_{1d_{1}} >_{ht} \dots >_{ht} T_{c_{1}d_{1}} >_{ht}\\
Z_{1(b_{2}+1)} >_{ht} \dots >_{ht} Z_{c_{2}(b_{2}+1)} >_{ht} \dots >_{ht} Z_{1d_{2}} >_{ht} \dots >_{ht} Z_{c_{2}d_{2}} >_{ht}\\
Z_{11} >_{ht} Z_{12} >_{ht} Z_{22} >_{ht} \dots >_{ht} Z_{1b_{2}} >_{ht} Z_{2b_{2}} >_{ht} \dots >_{ht} Z_{a_{2}b_{2}}.
\end{align*}
Finally, head and tail order, $\succ_{ht}$, is the reverse lexicographical order on $R$ induced by $>_{ht}$.
\end{definition}

\begin{remark}
The head and tail order, $>_{ht}$, is actually the direct product order of $>_{rlex}$ and $>_{mrlex}$ on $\mathbb{K}[T_u : u \in \mathcal{B}_{M_1}\cup \MB_{N_1}]$ and $\mathbb{K}[Z_v : v \in \mathcal{B}_{M_2}\cup \MB_{N_2}]$ defined in \cite[Section~2.1.2]{EH}.
\end{remark}

\begin{notation}\label{GquadraticTotal}
Let $\mathcal{G}$ be the union of the following collection of marked binomials where 
\begin{align*}
   \mathcal{G}_1 &=\{\underline{T_{u}T_{v}}-T_{u'}T_{v'}: uv=u'v' \text{ and } u,v \succ_{rlex} v'\}, \\
   \mathcal{G}_2 & =\{\underline{Z_{u}Z_{v}}-Z_{u'}Z_{v'}: uv=u'v' \text{ and } u,v \succ_{mrlex} v'\},\\
   \mathcal{G}_3 &=\{\underline{T_{u}Z_{v}}-T_{u'}Z_{v'}: uv=u'v' \text{ and } v \succ_{mrlex} v'\}.
\end{align*}
Note that $\mathcal{G}_1$ is the Gr\"obner basis of  $I_1$ with respect to term order $\succ_{rlex}$ on $\mathbb{K}[T_u : u \in \mathcal{B}_{M_1}\cup \MB_{N_1}]$ and $\mathcal{G}_2$ is the Gr\"obner basis of  $I_2$ with respect to the mixed reverse lexicographical order, $\succ_{mrlex}$, on $\mathbb{K}[Z_v : v \in \mathcal{B}_{M_2}\cup \MB_{N_2}]$ by \Cref{grobnerrevlex} and \Cref{grobnermixedrevlex}. Marked terms of each $\mathcal{G}_i$ for $i=1,2,3$ are leading terms with respect to the head and tail order, $\succ_{ht}$, on $R$. 

\end{notation}

\begin{remark}
One can also view $\mathcal{G}$ as the collection of $2 \times 2$ minors of the symmetric ladders where $\mathcal{G}_1$ and $\mathcal{G}_2$ correspond to collection of $2\times 2$ minors of symmetric ladders associated to $I_1$ and $I_2,$ respectively. Since  $\mathcal{G}_3$ is expressed in terms of $T$ and $Z$ variables, we can view it as the  collection of $2 \times 2$ minors of ``stacked" symmetric ladders. Here we use the term stacked to imply that the minors can be read from two different levels where the ladder corresponding to the first ideal is placed on the first level and the second one is placed on the second level while being aligned with respect to their initial entries. 

\end{remark}

As we have seen in \Cref{lowestdivisor} and \Cref{Mixlowestdivisor}, it is useful to set a standard factorization to express each monomial in $R$ with respect to the head and tail order.

\begin{definition}\label{fibergraphmulti2}
Let $V$ be a monomial in $R$. We express  $V$ as 
$V=(T_{m_1}\cdots T_{m_p})(Z_{n_1} \cdots Z_{n_q})$ such that $T_{m_1}  \geq_{ht} \dots \geq_{ht}  T_{m_p} >_{ht} Z_{n_1} \geq_{ht} \dots \geq_{ht} Z_{n_q}$, where $m_i \in \mathcal{B}_{M_1}\cup \MB_{N_1}$ and $n_j \in \mathcal{B}_{M_2}\cup \MB_{N_2}$ for each $i \in [p]$ and  $j \in [q]$.
\end{definition}

One can define fiber graphs of $I_1 \oplus I_2$ at multidegrees as in \Cref{sec:5}. We recall this construction below.

\begin{definition}
Given a multidegree  $\mu \in S[t,z]$, we denote the fiber graph of $I_1 \oplus I_2$ at $\mu$ with respect to $\mathcal{G}$ by $\Gamma_{\mu} (I_1,I_2)$. The vertices of the fiber graph are the monomials of $R_{\mu}$. In other words, a monomial $V=(T_{m_1}\cdots T_{m_p})(Z_{n_1} \cdots Z_{n_q})$ is a vertex if and only if $\mu = (m_1 \cdots m_p) (n_1\cdots n_q)$. Given two vertices $V$ and $V'$, there is a directed edge from $V$ to $V'$ whenever $V \rightarrow_{\mathcal{G}} V'$. 
\end{definition}

\begin{remark}\label{rem:trivialPQ}
Let $V=(T_{m_1}\cdots T_{m_p})(Z_{n_1} \cdots Z_{n_q})$ be a vertex in $\Gamma_{\mu} (I_1,I_2)$.  If $p=0$, then $V$ is a vertex in  $\displaystyle \Gamma_{\mu}(I_2)$  with respect to $\mathcal{G}_2$. Similarly, if $q=0$, then $V$ is a vertex in  $\Gamma_{\mu}(I_1)$  with respect to $\mathcal{G}_1$. 
\end{remark}

\begin{example}\label{SinkGraph}
Let $I_1=\MB (x_4x_5,x_2x_6), I_2=\MB(x_4^2,x_3x_6)$ and $\mu= x_2x_3x_4^2x_5x_6t^2z \in S[t,z].$ Note that
all elements of $R_{\mu}$ are the vertices of $\Gamma_{\mu} (I_1,I_2)$. In \Cref{fig:fibergraph}, we label each edge of the fiber graph with the related reduction producing  it.

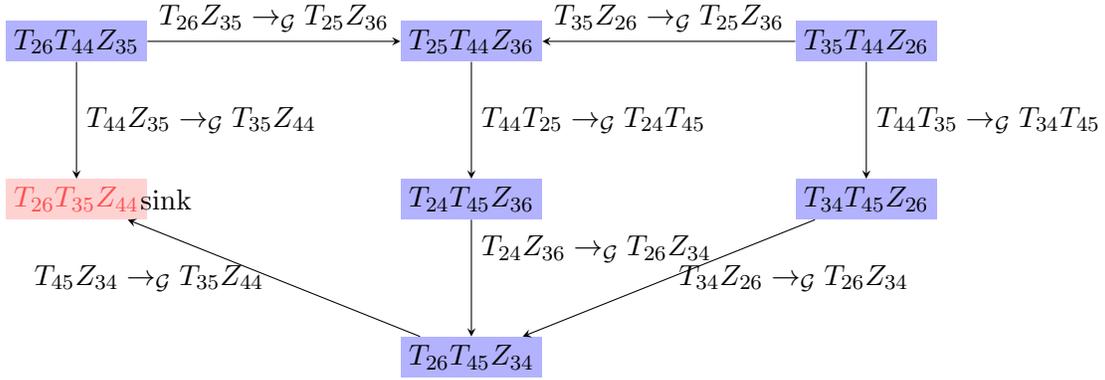
\begin{figure}[hbt]
  \begin{center}
    \begin{tikzpicture}[scale=0.7,->,>=stealth]
      \node[fill=blue!30,rectangle,inner sep=3 pt] (0) at (0,0) {$T_{25}T_{44}Z_{36}$};
      \node[fill=blue!30,rectangle,inner sep=3 pt] (1) at (-7.5,0)  {$T_{26}T_{44}Z_{35}$};
      \node[fill=blue!30,rectangle,inner sep=3 pt](2) at (7.5,0) {$T_{35}T_{44}Z_{26}$};
      \node[fill=blue!30,rectangle,inner sep=3 pt] (3) at (0,-3) {$T_{24}T_{45}Z_{36}$};
      \node[fill=pink!70,rectangle,inner sep=3 pt]  (4) at (-7.5,-3)  {\textcolor{red!70}{{$T_{26}T_{35}Z_{44}$}}};
        \node[fill=blue!30,rectangle,inner sep=3 pt] (5) at (7.5,-3)  {$T_{34}T_{45}Z_{26}$};
         \node[fill=blue!30,rectangle,inner sep=3 pt] (6) at (0,-6)  {$T_{26}T_{45}Z_{34}$};

\node at (-5.8,-3) {sink};
     
    \draw (1)--(0) node[midway,above] {$T_{26}Z_{35} \rightarrow_{\mathcal{G}} T_{25}Z_{36}$} ;
    \draw (1)--(4) node[midway,right] {$T_{44}Z_{35} \rightarrow_{\mathcal{G}} T_{35}Z_{44}$}; 
    \draw (0)--(3) node[midway,right] {$T_{44}T_{25} \rightarrow_{\mathcal{G}} T_{24}T_{45}$}; 
    \draw (2)--(0) node[midway,above] {$T_{35}Z_{26} \rightarrow_{\mathcal{G}}  T_{25}Z_{36}$}; 
    \draw (2)--(5) node[midway,right] {$T_{44}T_{35} \rightarrow_{\mathcal{G}} T_{34}T_{45}$};      
    \draw (3)--(6) node[near start,right] {$T_{24}Z_{36} \rightarrow_{\mathcal{G}} T_{26}Z_{34}$} ;  
    \draw (5)--(6) node[midway,right] {$T_{34}Z_{26} \rightarrow_{\mathcal{G}}  T_{26}Z_{34}$};
     \draw (6)--(4) node[midway,left] {$T_{45}Z_{34} \rightarrow_{\mathcal{G}} T_{35}Z_{44}$};                    
    \end{tikzpicture}
    \caption{The fiber graph  of $I_1 \oplus I_2$ at the  multidegree $x_2x_3x_4^2x_5x_6t^2z$}
\label{fig:fibergraph}
    \end{center}
\end{figure}

\end{example}

We are now ready to state one of the main results of this section.

\begin{theorem}\label{thm:GBs7}
The collection $\mathcal{G}$ is a Gr\"obner basis of $T(I_1 \oplus I_2)$ with respect to the head and tail order, $\succ_{ht}$, on $R$. 
\end{theorem}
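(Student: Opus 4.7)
The plan is to invoke \Cref{fibergraphdefinesGB} together with \Cref{directsumfibergrapgGB}: it suffices to show that every nonempty fiber graph $\Gamma_\mu(I_1,I_2)$ possesses a unique sink. First I would record that $\succ_{ht}$ is a block order. Since $>_{ht}$ places every $T$-variable above every $Z$-variable, for two vertices $V = T\cdot Z$ and $V' = T'\cdot Z'$ of the same fiber graph (which automatically share their total $T$- and $Z$-degrees) one has $V \succ_{ht} V'$ iff either $Z \succ_{ht,Z} Z'$ in the revlex order on $Z$-variables induced by $\succ_{mrlex}$, or else $Z = Z'$ and $T \succ_{ht,T} T'$ in the revlex order on $T$-variables induced by $\succ_{rlex}$. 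Consequently, any $\mathcal{G}_1$-reduction preserves the $Z$-part while strictly decreasing the $T$-part, whereas any $\mathcal{G}_2$- or $\mathcal{G}_3$-reduction strictly decreases the $Z$-part.

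From this, any sink $V = T\cdot Z$ must have $T$ equal to the unique sink of $\Gamma_{\mu_T}(I_1)$ under $\mathcal{G}_1$ (by \Cref{cor:uniqueSink}) and $Z$ equal to the unique sink of $\Gamma_{\mu_Z}(I_2)$ under $\mathcal{G}_2$ (by \Cref{cor:uniqueSink2}), where $\mu_T,\mu_Z$ denote the $x$-contents of $T,Z$. Hence uniqueness of the sink reduces to showing that the content partition $\mu_x = \mu_T\cdot\mu_Z$ is the same for every sink of $\Gamma_\mu(I_1,I_2)$.

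To prove this, I would induct on the total $tz$-degree $p + q$ of $\mu$. The base cases $p = 0$ and $q = 0$ reduce directly to the single-ideal results of \Cref{sec:5} via \Cref{rem:trivialPQ}. For the inductive step, following the strategy of \Cref{sinkreductionrlex} and \Cref{sinkreductionmrlex}, I would identify a specific factor $Z_{v^*}$ that must appear in the $Z$-part of every sink, where $v^*$ is the $\succ_{mrlex}$-smallest element of $\MB_{M_2}\cup\MB_{N_2}$ dividing $\mu_x$ for which $\mu_x / v^*$ still admits a valid factorization into $p$ generators of $I_1$ and $q - 1$ generators of $I_2$. Stripping this factor reduces the problem to the fiber graph $\Gamma_{\mu/(v^* z)}(I_1,I_2)$, to which the inductive hypothesis applies.

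The main obstacle will be establishing the analog of \Cref{fibergraphmovesformrlex} in this mixed setting: proving that whenever a vertex $V$ does not already contain $Z_{v^*}$, some reduction from $\mathcal{G}_2\cup\mathcal{G}_3$ is applicable and moves the $Z$-part strictly closer in $\succ_{ht,Z}$ to one containing $Z_{v^*}$. The case analysis splits on whether $v^* \in \MB_{M_2}$ or $v^* \in \MB_{N_2}$, and the harder case requires a $\mathcal{G}_3$-exchange that carries a variable $x_k$ with $b_2 < k \le d_2$ across the $T$-$Z$ boundary. Verifying that the corresponding $u' = uv/v'$ always lies in $\MB_{M_1}\cup\MB_{N_1}$ will critically rely on the running hypothesis $d_1 \le d_2$ together with the monotonicity of sink entries granted by \Cref{TailComparisonRevlex} and \Cref{TailComparisonMixRevlex}.
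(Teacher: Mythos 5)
Your high-level plan is correct and matches the paper's: invoke \Cref{fibergraphdefinesGB} and \Cref{directsumfibergrapgGB}, exploit the block structure of $\succ_{ht}$ to observe that $\mathcal{G}_1$-moves preserve the $Z$-part while $\mathcal{G}_2 \cup \mathcal{G}_3$-moves strictly decrease it, so uniqueness of the sink reduces to showing all sinks share the same partition $\mu_x = \mu_T \cdot \mu_Z$, and then strip the last $Z$-factor to descend. Your base cases via \Cref{rem:trivialPQ} and \Cref{cor:uniqueSink}, \Cref{cor:uniqueSink2} are also correct.

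However, there is a genuine gap exactly where you flag ``the main obstacle.'' You propose to positively identify a forced factor $Z_{v^*}$, but the characterization you give --- the $\succ_{mrlex}$-smallest $v^*$ such that $\mu_x/v^*$ ``still admits a valid factorization'' --- is circular, since knowing which factorizations exist is the fiber-graph data you are trying to control. More importantly, you never prove the analog of \Cref{fibergraphmovesformrlex} for the mixed setting; you only assert that the ``harder case requires a $\mathcal{G}_3$-exchange'' and that the verification ``will critically rely on $d_1 \le d_2$.'' This is where all the actual work lies, and your proposal does not carry it out.

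The paper does not attempt the positive ``forced-factor'' route at all. Instead \Cref{prop:fiberUniqueSink} argues by contradiction: assume two sinks $V, V'$ with distinct last $Z$-factors $n_q \succ_{mrlex} n'_q$, and rule out every possible pair of regions (both in $\MB_{M_2}$, both in $\MB_{N_2}$, or split across $\MB_{N_2}$/$\MB_{M_2}$). The split case requires a substantial apparatus built up in \Cref{atMostj_1}, \Cref{obs:1}, \Cref{lem:obs}, \Cref{rem:lemma}, and \Cref{atMostc_1}: these establish index constraints on the $T$-part of a sink given the region of $n_q$, a counting decomposition $\mathcal{N}, \mathcal{N}^c$ of $\mathcal{V}'_2$, and pigeonhole arguments comparing the lists $\mathcal{V}_1, \mathcal{V}_2, \mathcal{V}'_1, \mathcal{V}'_2$. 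None of this is subsumed by the monotonicity in \Cref{TailComparisonRevlex} and \Cref{TailComparisonMixRevlex} alone, which is all you invoke. In particular, the hypothesis $d_1 \le d_2$ enters only at one specific exchange (\Cref{atMostj_1}(b)), while most of the argument is combinatorial bookkeeping with no analog in the single-ideal proofs. Until the Case II / Subcase~1--2 analysis (or an equivalent) is actually supplied, your proposal is an outline, not a proof.
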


\begin{proof}
Analogous to the proofs of \Cref{grobnerrevlex} and \Cref{grobnermixedrevlex}, it suffices to show that for a multidegree $\mu \in S[t,z]$, the fiber graph  $\Gamma_{\mu} (I_1,I_2)$ is empty or has a unique sink by  \Cref{fibergraphdefinesGB} and \Cref{directsumfibergrapgGB}. If the fiber graph is empty, we are done. If not, this statement is proved in \Cref{prop:fiberUniqueSink}.
\end{proof}

In comparison to fiber graphs of $I_1$ and $I_2$, studying sinks of fiber graphs of $I_1 \oplus I_2$ is much more complex because one needs to consider reductions associated to both ideals via the collection $\mathcal{G}_3$ given in terms of $T$ and $Z$ variables. In what follows, we carefully set our notation and establish several auxiliary lemmas to utilize in our proof of \Cref{prop:fiberUniqueSink}. The proofs of Lemmas \ref{atMostj_1}, \ref{lem:obs}, and \ref{atMostc_1} are postponed until the end of the section.

\begin{notation}\label{twoV}
Let  $\mu$ be a multidegree such that $\Gamma_{\mu} (I_1,I_2)$ is nonempty and $p, q \geq 1$. Let $V$ and $V'$  be two vertices of $\Gamma_{\mu} (I_1,I_2)$ such that
\begin{align*}
  V= \underbrace{(T_{m_1}\cdots T_{m_p})}_{V_1} \underbrace{(Z_{n_1} \cdots Z_{n_q})}_{V_2} \text{ and } 
   V'= \underbrace{(T_{m'_1}\cdots T_{m'_p})}_{V'_1} \underbrace{(Z_{n'_1} \cdots Z_{n'_q})}_{V'_2}.
\end{align*}
 Notice that $\mu=\mu_1\mu_2=\mu'_1\mu'_2$ where  $\mu_1= m_1\cdots m_p$, $\mu_2= n_1\cdots n_q$, $\mu'_1= m'_1\cdots m'_p$, and $\mu'_2= n'_1\cdots n'_q$.  We use $\textbf{i}_k, \textbf{j}_k,\textbf{i}'_k, \textbf{j}'_k$ for the indices associated to $m_k$ and $m'_k$, and  $i_l,j_l, i'_l, j'_l$ for the indices associated to $n_l$ and $n'_l$. These two lists of indices should not be confused. More explicitly, for each $k\in  [p]$ and $l\in [q]$, we set
\begin{align*}
    &m_k= x_{\textbf{i}_k} x_{\textbf{j}_k}  \text{ and } n_l= x_{i_l} x_{j_l}\\
    &m'_k= x_{\textbf{i}'_k} x_{\textbf{j}'_k}  \text{ and } n'_l= x_{i'_l} x_{j'_l}.
\end{align*}

 We further introduce the following lists in which an element can appear more than once.
\begin{align*}
   \mathcal{V} = \mathcal{V}_1 \cup \mathcal{V}_2 \text{ where }  &\mathcal{V}_1= \{ \textbf{i}_k, \textbf{j}_k: k\in [p] \} \text{ and } \mathcal{V}_2= \{ i_l,j_l:  l\in [q] \}\\
    \mathcal{V}' = \mathcal{V}'_1 \cup \mathcal{V}'_2 \text{ where }  &\mathcal{V}'_1= \{ \textbf{i}'_k, \textbf{j}'_k: k\in [p] \} \text{ and } \mathcal{V}'_2= \{ i'_l,j'_l:  l\in [q] \}
\end{align*}

 Note that the two lists $\mathcal{V}$ and $\mathcal{V}'$ coincide  as $\mu_1 \mu_2=\mu= \mu'_1 \mu'_2$.  In addition, we denote the first and second half of the indices associated to $n_1, \ldots, n_q$ by
 $$\mathcal{I}= \{i_1, \ldots, i_q\} \text{ and } \mathcal{J}= \{j_1, \ldots, j_q\}.$$ 
 
 It is clear that $\mathcal{V}_2 =\mathcal{I} \cup \mathcal{J}$. 
 \end{notation}
 
  \begin{example} We use \Cref{SinkGraph} to illustrate \Cref{twoV}. Let $\mu=(x_2x_6)(x_4x_5)(x_3x_4)=(x_3x_5)(x_4x_4)(x_2x_6)$, then $V=T_{m_1}T_{m_2}Z_{n_1}$ and $V'=T_{m_1'}T_{m_2'}Z_{n_1'}$  where $m_1=x_2x_6$, $m_2=x_4x_5$, $n_1=x_3x_4$, $m_1'=x_3x_5$, $m_2'=x_4x_4$, and $n_1'=x_2x_6$. Moreover,  $\mathcal{V}_1=\{2,6,4,5\}$, $\mathcal{V}_2=\{3,4\}$, $\mathcal{V}'_1=\{3,5,4,4\}$, $\mathcal{V}'_2=\{2,6\}$, $\mathcal{I}=\{3\}$, and $\mathcal{J}=\{4\}$.
 \end{example}

 For the remainder of this section, we adopt \Cref{twoV}.

\begin{remark}
If $V$ is a sink of $\Gamma_{\mu} (I_1,I_2)$ with respect to $\mathcal{G}$, then $V_1$ must be the unique sink of $\Gamma_{\mu_1} (I_1)$ with respect to $\mathcal{G}_1$ and $V_2$ must be the  unique sink of  $\Gamma_{\mu_2} (I_2)$ with respect to $\mathcal{G}_2$.
\end{remark}

Given a vertex $V$ as in \Cref{twoV}, its associated monomials $m_1, \cdots , m_p$ belong to either the $\mathcal{B}_{M_1}$ region or the $\mathcal{B}_{N_1}$
region and the monomials $n_1, \cdots, n_q$ belong to the $\mathcal{B}_{M_2}$ region or the $\mathcal{B}_{N_2}$ region shown in \Cref{B(M_i)}.
If $V$ is a sink, by \Cref{TailComparisonRevlex} and \Cref{TailComparisonMixRevlex},  we can deduce several relations between the elements of $\mathcal{V}_i$ based on the regions of the associated monomials of $V_i$ for each $i=1,2$.  In the following lemmas, we describe several relations between the elements $\mathcal{V}_1$ and $\mathcal{V}_2$ for a sink $V$ by taking into account the regions of certain monomials.

\begin{lemma}\label{atMostj_1}
Suppose $V$ is a sink of $\Gamma_{\mu} (I_1,I_2)$ such that $n_q \in \MB_{N_2}$. Then 
\begin{enumerate} 
\item[(a)] \label{N_2} each of $n_1, \ldots,n_q $ is contained in $ \MB_{N_2}$  and
    \begin{align}\label{eq:1}
          i_1\leq i_2 \leq \cdots \leq i_q \leq c_2 < b_2 < j_1 \leq j_2 \leq \cdots \leq j_q \leq d_2,
    \end{align} where $b_2,c_2,d_2$ are as defined in \Cref{I1I2}.
\item[(b)] each \emph{$\textbf{j}_k $} is at most $j_1$ for $k\in [p]$, \vskip0.1cm
\item[(c)] each \emph{$\textbf{i}_k$} is either at most $i_1$ or greater than $c_2$ for  $k\in [p]$.
\end{enumerate}

\end{lemma}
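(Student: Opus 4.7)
The plan is to deduce part (a) directly from the analysis of sinks of $\Gamma_{\mu_2}(I_2)$ carried out in Section~5, and then to prove (b) and (c) by contradiction, constructing in each case an explicit binomial in $\mathcal{G}_3$ whose marked term divides $V$.

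For (a), I would first observe that, since no reduction from $\mathcal{G}_2$ can apply to $V$, the factor $V_2 = Z_{n_1}\cdots Z_{n_q}$ must be a sink of $\Gamma_{\mu_2}(I_2)$ with respect to $\mathcal{G}_2$. Writing $V_2$ in its standard factorization (Notation~\ref{Mixlowestdivisor}), the $\MB_{N_2}$-factors precede the $\MB_{M_2}$-factors in $\succ_{mrlex}$-order. Since the last factor $n_q$ is assumed to lie in $\MB_{N_2}$, there can be no $\MB_{M_2}$-factors, so every $n_l$ lies in $\MB_{N_2}$. Lemma~\ref{TailComparisonMixRevlex}(a) applied pairwise then yields $i_1 \leq \cdots \leq i_q$ and $j_1 \leq \cdots \leq j_q$, while the bounds $i_l \leq c_2 < b_2 < j_l \leq d_2$ come from the explicit description of $\MB_{N_2}$ as the set of quadric minimal generators of $\MB(N_2)\setminus \MB(M_2)$ (using $c_2 < a_2$ so the non-containment in $\MB(M_2)$ reduces to $j_l > b_2$).

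For (b), I would argue by contradiction: suppose $\textbf{j}_k > j_1$ for some $k \in [p]$. Define $u' := x_{\textbf{i}_k}x_{j_1}$ and $v' := x_{i_1}x_{\textbf{j}_k}$. Since $i_1 \leq c_2$ and $b_2 < j_1 < \textbf{j}_k \leq d_1 \leq d_2$, the monomial $v'$ lies in $\MB_{N_2}$. By strong stability of $I_1$ applied to the factor $x_{\textbf{j}_k}$ of $m_k$ (replaced by the earlier variable $x_{j_1}$), $u' \in I_1$; because $I_1$ is generated in degree two, $u' \in \MB_{M_1}\cup \MB_{N_1}$. One checks $m_k n_1 = u'v'$ and, comparing the $\MB_{N_2}$-elements $n_1$ and $v'$ (sharing first index $i_1$ but with $j_1 < \textbf{j}_k$), obtains $n_1\succ_{mrlex} v'$. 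Hence $T_{m_k}Z_{n_1} - T_{u'}Z_{v'} \in \mathcal{G}_3$ with marked term $T_{m_k}Z_{n_1}$ dividing $V$, contradicting sinkhood.

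For (c), I would argue analogously: suppose $i_1 < \textbf{i}_k \leq c_2$ for some $k$. Set $u' := x_{i_1}x_{\textbf{j}_k}$ and $v' := x_{\textbf{i}_k}x_{j_1}$. Then $v' \in \MB_{N_2}$ because $\textbf{i}_k \leq c_2$ and $b_2 < j_1 \leq d_2$, while $u'\in \MB_{M_1}\cup \MB_{N_1}$ by strong stability of $I_1$ applied to the replacement $x_{\textbf{i}_k}\mapsto x_{i_1}$ (noting $i_1 < \textbf{i}_k$). Again $m_k n_1 = u'v'$, and with $n_1,v' \in \MB_{N_2}$ sharing second index $j_1$ but with $i_1 < \textbf{i}_k$, we have $n_1 \succ_{mrlex} v'$, so $T_{m_k}Z_{n_1}-T_{u'}Z_{v'}\in \mathcal{G}_3$ gives a reduction, contradiction. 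The main technical obstacle in this scheme is verifying that the candidate monomials $u',v'$ actually lie in the correct Borel generating sets; this is precisely what forces the threshold $c_2$ in (c) (for $\textbf{i}_k > c_2$ the candidate $v'$ would fail to lie in $\MB_{N_2}$, blocking the construction), and it is what uses the standing hypothesis $d_1 \leq d_2$ in (b) to guarantee $v'\in \MB_{N_2}$.
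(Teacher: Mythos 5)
Your proposal is correct and follows essentially the same route as the paper's proof: part (a) uses the standard factorization in the mixed-revlex order to conclude that all $n_l$ lie in $\MB_{N_2}$, together with \Cref{TailComparisonMixRevlex} for the index inequalities; parts (b) and (c) both proceed by constructing an explicit reduction $T_{m_k}Z_{n_1}\to T_{u'}Z_{v'}$ in $\mathcal{G}_3$ and checking the required memberships via strong stability and the bound $d_1\leq d_2$. The only minor deviation is in (b), where the paper first reduces to $k=p$ (after noting $\mathbf{j}_k\leq\mathbf{j}_p$) and then applies the reduction to $T_{m_p}Z_{n_1}$, whereas you run the argument directly for an arbitrary $k$ with $\mathbf{j}_k>j_1$; your version is a little cleaner since it avoids justifying the reduction to $p$, but the content is the same.
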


Under certain assumptions, we can relate $V$ and $V'$ through the lists given in \Cref{twoV} and counting arguments. In the following set of observations, we provide complementary facts which will be quite useful in our study of sinks of $\Gamma_{\mu} (I_1,I_2)$.

\begin{observation}\label{obs:1}
Suppose $n_q \in \MB_{N_2}$ and $n'_q \in \MB_{M_2}$. One can decompose $\mathcal{V}_2'$  into the following disjoint lists.  
     $$\mathcal{N} = \{ v \in \mathcal{V}_2' : v >b_2\} \text{ and } \mathcal{N}^c = \{ v \in \mathcal{V}_2' : v \leq b_2\}$$ 
     \begin{enumerate}
         \item[(i)]  Observe that $\mathcal{N} \subseteq \{ j'_1, \ldots, j'_{q-1}\} \text{ and } \{ i'_1, \ldots, i'_q, j'_q\} \subseteq \mathcal{N}^c $ since $ j'_q \leq b_2$ and $i'_l \leq a_2$  for each $l \in [q]$. Then  $\mathcal{N}$ has at most $q-1$ elements and  $\mathcal{N}^c $ has at least $q+1$ elements.  \vskip0.1cm
         \item[(ii)]      Let $j_r$ be the smallest element in $\mathcal{J}$ such that $j_r$ is not contained in $\mathcal{V}'_2$. 
         The existence of such an element
is guaranteed by observing that $\mathcal{V}_2' \cap \mathcal{J} \subseteq \mathcal{N}$ by \Cref{atMostj_1}; by (i) there are at most $q-1$ such elements.
Then, there exists $h \in [p]$ such that $x_{j_r}$ divides $m'_h$. As a result, we have $j_1 \leq j_r \leq d_1$  where the first inequality is due to (\ref{eq:1}).  \vskip0.1cm
         \item[(iii)]    Since at most $q$ many of the elements of $\mathcal{N}^c $ can be contained in $\mathcal{I}$ and  none of them is in $\mathcal{J}$ by \Cref{atMostj_1}, there exists $f \in [p]$ such that  $m_f$ is divisible by $x_e$ for $e \in \mathcal{N}^c $, i.e., $e \leq b_2$. Note that $V_1$ is divisible by $T_{m_f}$. For the remainder of this section, we shall denote $m_f= x_e x_{e'}$ by avoiding our standard notation as it is possible to have $e'<e$.  
     \end{enumerate}
\end{observation}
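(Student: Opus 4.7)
The three claims in this observation are essentially counting/bookkeeping statements built on top of the standard factorization of $V'_2$ with respect to $\succ_{mrlex}$ and the ordering of indices \eqref{eq:1} from \Cref{atMostj_1}(a). My plan is to treat each claim in order, extracting the numerical content from the structural hypotheses.

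For part (i), the plan is to unpack the standard factorization of $V'_2$. Since $n'_q \in \MB_{M_2}$, by \Cref{Mixlowestdivisor} there is an index $k$ with $0 \leq k < q$ such that $n'_l \in \MB_{N_2}$ for $l \leq k$ and $n'_l \in \MB_{M_2}$ for $k < l \leq q$. For each $n'_l = x_{i'_l} x_{j'_l} \in \MB_{N_2}$ one has $i'_l \leq c_2 \leq b_2$ and $j'_l > b_2$, while for $n'_l \in \MB_{M_2}$ both indices are $\leq b_2$. I will then read off that $\mathcal{N}$ consists precisely of $\{j'_1,\ldots,j'_k\}$ and hence $|\mathcal{N}| = k \leq q-1$, and $\mathcal{N}^c$ consists of $\{i'_1,\ldots,i'_q\} \cup \{j'_{k+1},\ldots,j'_q\}$, giving $|\mathcal{N}^c| = 2q - k \geq q+1$.

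For part (ii), I will combine (i) with the inequalities \eqref{eq:1}: since $b_2 < j_1 \leq \cdots \leq j_q$, every element of $\mathcal{J}$ that appears in $\mathcal{V}'_2$ must land in $\mathcal{N}$. Because $|\mathcal{N}| \leq q - 1 < q = |\mathcal{J}|$ (counted with multiplicity), at least one $j_r \in \mathcal{J}$ fails to appear in $\mathcal{V}'_2$; choosing the smallest such $r$ makes the definition of $j_r$ well-posed. The identity of multisets $\mathcal{V} = \mathcal{V}'$ (equivalently, $\mu_1\mu_2 = \mu'_1\mu'_2$) then forces $x_{j_r}$ to divide $\mu'_1$, producing the required $h \in [p]$ with $x_{j_r} \mid m'_h$. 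Finally $j_r \leq d_1$ follows since $m'_h \in \MB(M_1,N_1)$ and every variable appearing in a minimal generator of $\MB(M_1,N_1)$ has index at most $d_1$; the inequality $j_1 \leq j_r$ is immediate from the choice of $j_r \in \mathcal{J}$ together with \eqref{eq:1}.

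For part (iii), I will again use $\mathcal{V} = \mathcal{V}'$ as multisets to count where the elements of $\mathcal{N}^c$ live inside $\mathcal{V}_1 \cup \mathcal{V}_2$. Because every element of $\mathcal{N}^c$ is $\leq b_2$ while every element of $\mathcal{J}$ exceeds $b_2$ by \eqref{eq:1}, no element of $\mathcal{N}^c$ can come from $\mathcal{J}$; thus the contribution of $\mathcal{V}_2$ to $\mathcal{N}^c$ is contained in $\mathcal{I}$, whose multiset size is $q$. Since $|\mathcal{N}^c| \geq q+1$ by (i), at least one element $e \in \mathcal{N}^c$ must come from $\mathcal{V}_1$, yielding some $f \in [p]$ for which $x_e$ divides $m_f$. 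The inequality $e \leq b_2$ is built into the definition of $\mathcal{N}^c$, and we label $m_f = x_e x_{e'}$ without imposing $e \leq e'$ since the only information we have about $e'$ is that $m_f \in \MB(M_1,N_1)$. The argument is essentially pure counting, so no step should present a serious obstacle; the only thing that requires care is consistently treating $\mathcal{V}$, $\mathcal{V}'$, $\mathcal{I}$, $\mathcal{J}$, $\mathcal{N}$, $\mathcal{N}^c$ as multisets so that the cardinalities add up correctly.
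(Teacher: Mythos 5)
Your proposal is correct and follows essentially the same counting argument that the paper embeds in the observation itself: part (i) reads off the index bounds from membership in $\MB_{M_2}$ versus $\MB_{N_2}$, and parts (ii) and (iii) use the multiset equality $\mathcal{V}=\mathcal{V}'$ together with the cardinality bounds $|\mathcal{N}|\leq q-1$ and $|\mathcal{N}^c|\geq q+1$ to force an element into $\mathcal{V}'_1$ (resp. $\mathcal{V}_1$). The only difference is that in (i) you identify $\mathcal{N}$ and $\mathcal{N}^c$ exactly via the split point $k$ of the standard factorization, which is a harmless refinement of the paper's containments.
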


Assuming the existence of another sink vertex, we can provide further relations between the elements of $\mathcal{V}_1$ and $\mathcal{V}_2$. 
These relations will be used in our proof of \Cref{prop:fiberUniqueSink}.

\begin{lemma}\label{lem:obs}
Suppose $V$ and $V'$ are sinks of   $\Gamma_{\mu} (I_1,I_2)$ such that $n_q \in \MB_{N_2} $ and $n'_q \in \MB_{M_2}$. 
\begin{enumerate}
    \item[(a)] \label{NotBc_1b_2}  None of the monomials among $m_1, \ldots, m_p$  is contained in $ \MB (x_{c_1}x_{b_2})$.
    \item[(b)] \label{lem:j_1<a_1} If  the index \emph{$ \textbf{j}_1$}  is greater than $b_2$, then \emph{$\textbf{j}_1$} is at most $a_1$.
    \item[(c)] \label{j_1>b_1} The index $j_1$ is greater than $b_1$.
\end{enumerate}
\end{lemma}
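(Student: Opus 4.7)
The plan is to prove all three parts by contradiction: assume the conclusion fails and exhibit a marked binomial in $\mathcal{G}_1\cup\mathcal{G}_2\cup\mathcal{G}_3$ whose leading term divides $V$, contradicting that $V$ is a sink of $\Gamma_\mu(I_1,I_2)$. In each case the engine will be a cross swap from $\mathcal{G}_3$, pairing some $T_{m_k}$ of $V_1$ with a distinguished $Z_{n_l}$ of $V_2$ (usually $Z_{n_q}$ or $Z_{n_1}$). The existence of the second sink $V'$ enters only through \Cref{obs:1}: the equality $\mathcal{V}=\mathcal{V}'$ together with $n_q\in\MB_{N_2}$ and $n'_q\in\MB_{M_2}$ produces a monomial $m'_h$ divisible by some $x_{j_r}$ with $j_1\leq j_r\leq d_1$, and a monomial $m_f=x_e x_{e'}$ with $e\leq b_2$, whose indices are exactly what is needed to build the forbidden swap.

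For part (a), I would suppose $m_k=x_{\textbf{i}_k}x_{\textbf{j}_k}\in\MB(x_{c_1}x_{b_2})$, i.e.\ $\textbf{i}_k\leq c_1$ and $\textbf{j}_k\leq b_2$, and consider the cross swap $T_{\textbf{i}_k\textbf{j}_k}Z_{i_q j_q}-T_{\textbf{i}_k j_q}Z_{i_q\textbf{j}_k}$. The target $x_{i_q}x_{\textbf{j}_k}$ lies in $\MB(M_2,N_2)$ automatically because $i_q\leq c_2$ and $\textbf{j}_k\leq b_2$, while $x_{\textbf{i}_k}x_{j_q}$ lies in $\MB(M_1,N_1)$ precisely when $j_q\leq d_1$. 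In the alternative case $j_q>d_1$, I would substitute for $Z_{n_q}$ the variable $Z_{n_l}$ carrying the index $j_r\leq d_1$ from \Cref{obs:1}(ii); if even that is not enough, the fallback is to replace $T_{m_k}$ by $T_{m_f}$ from \Cref{obs:1}(iii) and run the analogous $\mathcal{G}_3$-swap against $Z_{n_q}$. Part (b) is handled in the same spirit: the hypotheses $\textbf{j}_1>b_2$ and $\textbf{j}_1>a_1$ force $m_1\in\MB_{N_1}$, hence $\textbf{i}_1\leq c_1$ and $\textbf{j}_1\leq d_1$, which sets up a cross swap between $T_{m_1}$ and $Z_{n_q}$ directly analogous to the one in (a).

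For part (c), assume $j_1\leq b_1$. Then $b_2<b_1$, and by \Cref{atMostj_1}(b) one has $\textbf{j}_k\leq j_1\leq b_1$ for every $k$, so each $m_k$ lies in $\MB_{M_1}$. The index count in \Cref{obs:1}(i) then forces $\mathcal{V}'_1$ to contain strictly more indices $>b_2$ than $\mathcal{V}_1$ does, yielding some $m'_h$ with a coordinate $>b_2$ whose multiplicity in $\mathcal{V}_2$ exceeds that in $\mathcal{V}'_2$; pushing this mismatch against the sinkhood of $V'$ produces the desired $\mathcal{G}_3$-swap on $V$. The main obstacle throughout is verifying that the target monomials of each proposed swap actually lie in $\MB(M_1,N_1)$ and $\MB(M_2,N_2)$: since $b_1$ and $b_2$ are not a priori comparable and only $d_1\leq d_2$ is assumed between the top indices, a careful sub-case analysis on whether to pair a given $T_{m_k}$ with $Z_{n_1}$, $Z_{n_q}$, or an intermediate $Z_{n_l}$ from \Cref{obs:1}(ii) appears unavoidable.
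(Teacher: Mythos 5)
Part~(a) of your proposal is essentially the paper's argument: the paper directly pairs $T_{m_k}$ with $Z_{n_r}$ where $r$ is the index from \Cref{obs:1}(ii), so $j_r\leq d_1$ guarantees the $T$-target lands in $\MB(M_1,N_1)$ and $j_r>b_2$ makes it a legitimate $\mathcal G_3$-element (the $Z$-target $x_{i_r}x_{\textbf{j}_k}$ is in $\MB_{M_2}$, so it is $\succ_{mrlex}$-smaller). Your first attempt with $Z_{n_q}$ and the successive fallbacks are unnecessary detours, but the core idea is right.

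Part~(b) contains a genuine gap. You assert that $\textbf{j}_1>b_2$ together with $\textbf{j}_1>a_1$ forces $m_1\in\MB_{N_1}$. That is false: since $a_1\leq b_1$, one can have $m_1\in\MB_{M_1}$ with $\textbf{i}_1\leq a_1<\textbf{j}_1\leq b_1$ (note $b_1$ and $b_2$ are not comparable a priori, so $b_2<\textbf{j}_1\leq b_1$ is perfectly possible). Without $m_1\in\MB_{N_1}$ you do not get $\textbf{i}_1\leq c_1$, and the advertised $\mathcal G_3$-swap against $Z_{n_q}$ has no target inside $\MB(M_1,N_1)$: you would need $x_{\textbf{i}_1}x_{j_q}$ (or the analogue with $j_r$) to lie in $\MB(M_1,N_1)$, which requires control of $\textbf{i}_1$ relative to $c_1$ that you simply do not have. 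The paper takes a fundamentally different route here: it never looks for a forbidden swap on $V$ but instead counts. Setting $\mathcal J'=\{\textbf j_1,\dots,\textbf j_p\}$, every element of $\mathcal J\cup\mathcal J'$ exceeds both $a_1$ and $b_2$; since $\mathcal V'_2$ holds at most $q-1$ such indices (\Cref{obs:1}(i)), at least $p+1$ must land in $\mathcal V'_1$, and pigeonhole produces some $m'_k$ with both coordinates in $\mathcal J\cup\mathcal J'$, i.e. both $>a_1$, contradicting $m'_k\in\MB(M_1,N_1)$. Note that the sink hypothesis on $V'$ is used only through the membership constraint on $m'_k$, not through a reduction.

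Part~(c) is also not recoverable as written. Your opening observation that $j_1\leq b_1$ and \Cref{atMostj_1}(b) force each $m_k\in\MB_{M_1}$ is correct, but the subsequent ``multiplicity mismatch'' reasoning is too loose to extract a concrete element of $\mathcal G$ whose leading term divides $V$: counting indices $>b_2$ in $\mathcal V'_1$ vs.\ $\mathcal V_1$ tells you nothing directly usable because ``sinkhood of $V'$'' is a statement about reductions of $V'$, not about $V$. The paper's argument is sharper: it first proves the claim $\textbf j_k>b_2$ for all $k$ by exhibiting the swap $T_{m_k}Z_{n_1}-T_{\textbf i_k j_1}Z_{i_1\textbf j_k}$ whenever $\textbf j_k\leq b_2<j_1\leq b_1$; this claim then forces $e=\textbf i_f\leq b_2$ from \Cref{obs:1}(iii), whence $\textbf i_1\leq b_2<j_1$; then part~(b) (applicable since $\textbf j_1>b_2$) gives $\textbf j_1\leq a_1$; and these bounds together license the final swap $T_{m_1}Z_{n_1}-T_{\textbf j_1 j_1}Z_{i_1\textbf i_1}$. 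You miss both the intermediate claim and the appeal to part~(b), and since your part~(b) is broken, the dependency chain would collapse anyway.
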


\begin{remark}\label{rem:lemma}
Suppose $V$ and $V'$ are sinks of   $\Gamma_{\mu} (I_1,I_2)$ such that $n_q \in \MB_{N_2} $ and $n'_q \in \MB_{M_2}$. In the light of \Cref{lem:obs}, we can better understand the indices of the monomials from \Cref{obs:1} (ii) and (iii).  
\begin{enumerate}
    \item[(a)]  It follows from   \Cref{lem:obs} (a) that $c_1<e'$ for the monomial $m_f=x_ex_{e'}$ from \Cref{obs:1} (iii). In addition, suppose $ b_2 <\textbf{j}_1$; then we must have $b_2 < \textbf{j}_f$ by (\ref{eq:1}). Since $e \leq b_2$, it follows that $e<e'$ and hence $e=\textbf{i}_f$ and $e' =\textbf{j}_f$.
    \vskip0.1cm
    \item[(b)] \label{j_rm_f}  Recall from \Cref{obs:1} (ii)  that $j_r $ belongs to $\{\textbf{i}'_h, \textbf{j}'_h\}$. Note that $\textbf{i}'_h $ can not be greater than $c_1$. If $c_1<\textbf{i}'_h$, then the monomial $m'_h $ is contained in $\MB_{M_1} \setminus \MB_{N_1}$ such that $\textbf{i}'_h  \leq \textbf{j}'_h \leq b_1$. Since $b_1< j_1 \leq j_r$ by \Cref{lem:obs} (c) and  (\ref{eq:1}), we conclude that  $j_r \notin \{\textbf{i}'_h,\textbf{j}'_h\}$, a contradiction.  Therefore,  $\textbf{i}'_h \leq c_1$ and $j_r= \textbf{j}'_h$.
\end{enumerate}
\end{remark}

 Before the proof of \Cref{prop:fiberUniqueSink}, we present our last auxiliary lemma.  
  
\begin{lemma}\label{atMostc_1}
Suppose $V$ and $V'$ are sinks of   $\Gamma_{\mu} (I_1,I_2)$ with  $n_q \in \MB_{N_2} $ and $n'_q \in \MB_{M_2}$. Then  \emph{$\textbf{i}_1 $} is at most $ c_1$ or one of the monomials from $m_1, \ldots, m_p$ is in $\MB_{N_1}$.
\end{lemma}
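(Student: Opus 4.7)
The approach is by contradiction: assume $\textbf{i}_1 > c_1$ and that no $m_k$ belongs to $\mathcal{B}_{N_1}$, so every $m_k \in \mathcal{B}_{M_1}$. Pairwise use of \Cref{TailComparisonRevlex}(a) between $m_1$ and $m_k$ gives $\textbf{i}_1 \leq \textbf{i}_k$ and $\textbf{j}_1 \leq \textbf{j}_k$, and combined with $m_k \in \mathcal{B}_{M_1}$ forces $c_1 < \textbf{i}_1 \leq \textbf{i}_k \leq \textbf{j}_k \leq b_1$; in particular every index in $\mathcal{V}_1$ exceeds $c_1$.

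By \Cref{rem:lemma}(b), there exists $h \in [p]$ with $m'_h \in \mathcal{B}_{N_1}$, $\textbf{i}'_h \leq c_1$, and $\textbf{j}'_h = j_r > b_1$. Since $\mathcal{V} = \mathcal{V}'$, the index $\textbf{i}'_h$ lies in $\mathcal{V}$ but, by the previous paragraph, not in $\mathcal{V}_1$. If $\textbf{i}'_h$ lay in $\mathcal{J}$, then $j_1 \leq \textbf{i}'_h \leq c_1$, which combined with \Cref{atMostj_1}(b) would yield $\textbf{j}_1 \leq j_1 \leq c_1 < \textbf{i}_1 \leq \textbf{j}_1$, a contradiction. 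Hence $\textbf{i}'_h = i_l$ for some $l$, so $i_1 \leq c_1$. Applying \Cref{atMostj_1}(c) with $k=1$ then rules out $\textbf{i}_1 \leq i_1$ and forces $\textbf{i}_1 > c_2$, whence $c_2 < a_1$ and $\textbf{i}_k > c_2$ for every $k$. Moreover, because $j_1 \leq j_r \leq d_1$ (from \Cref{obs:1}(ii)) and $i_1 \leq c_1$, the monomial $n_1$ lies in $\mathcal{B}_{N_1}$, and \Cref{lem:obs}(c) gives $j_1 > b_1$.

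What remains is to exhibit an element of $\mathcal{G}_3$ whose leading monomial divides $V$, contradicting the sink hypothesis on $V$. The plan is to work with the index $f$ from \Cref{obs:1}(iii), for which $\textbf{i}_f \leq b_2$ (using \Cref{rem:lemma}(a) whenever $\textbf{j}_1 > b_2$), and to analyse the four pairings of the variables in $m_f \cdot n_1 = x_{\textbf{i}_f}x_{\textbf{j}_f}x_{i_1}x_{j_1}$. In the favourable case $\textbf{i}_f \leq c_1$ and $\textbf{j}_f \leq b_2$, the pairing $u = x_{\textbf{i}_f}x_{j_1}$, $v = x_{i_1}x_{\textbf{j}_f}$ gives $u \in \mathcal{B}_{N_1}$ (using $j_1 \leq d_1$) and $v \in \mathcal{B}_{M_2}$; since $n_1 \in \mathcal{B}_{N_2}$ precedes $v \in \mathcal{B}_{M_2}$ in $\succ_{mrlex}$, the marked monomial of the resulting binomial is $T_{m_f}Z_{n_1}$, which divides $V$, giving the desired contradiction. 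The remaining subcases, distinguished by the position of $\textbf{i}_f$ relative to $c_1$ and of $\textbf{j}_f$ relative to $b_2$, are handled either by the alternative pairing $u = n_1$, $v = m_k$ (when some $m_k \in \mathcal{B}_{M_2}$, exploiting that $\textbf{j}_k \leq b_1$ can fall below $b_2$), or by switching the choice of $m_k$ and invoking \Cref{atMostj_1}(b) once more.

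The main obstacle is this last step: the setup imposes no a priori order among the six parameters $a_1, a_2, b_1, b_2, c_1, c_2$, so producing a valid binomial in every configuration requires careful case-by-case verification guided by \Cref{atMostj_1}, \Cref{obs:1}, and the constraints derived above.
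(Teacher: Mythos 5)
The first half of your proof tracks the paper's Step 1 correctly: you establish the chain $c_1 < \textbf{i}_1 \leq \textbf{i}_k \leq \textbf{j}_k \leq b_1$, locate $\textbf{i}'_h \leq c_1$, place it in $\mathcal{I}$, and conclude $i_1 \leq c_1$ and $n_1 \in \MB_{N_1}$. Your route for ruling out $\textbf{i}'_h \in \mathcal{J}$ (via \Cref{atMostj_1}(b) directly) is a legitimate variant of the paper's, which instead first shows $c_1 < b_2$. The extra deduction $\textbf{i}_1 > c_2$ from \Cref{atMostj_1}(c) is correct but the paper does not need it and neither, ultimately, do you.

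The problem lies in your final step, which is not only incomplete but internally inconsistent. You announce that the ``favourable case'' is $\textbf{i}_f \leq c_1$, yet under the standing contradiction hypothesis you have already established $\textbf{i}_k > c_1$ for \emph{every} $k \in [p]$, so this case is vacuous and your only concrete construction never applies. The remaining subcases are left as gestures, and you yourself flag the difficulty. More fundamentally, you never prove the unconditional inequality $\textbf{j}_1 > b_2$, which is the linchpin of the paper's argument: the paper first shows no $m_k$ lies in $\MB_{M_2}$ (a short argument using $n_1 \in \MB_{N_1}$ and a $\mathcal{G}_3$ reduction), then a counting argument over elements $\leq a_2$ produces $\textbf{i}_1 \leq a_2$, forcing $\textbf{j}_1 > b_2$ (else $m_1 \in \MB_{M_2}$), and \Cref{lem:obs}(b) then caps $\textbf{j}_1 \leq a_1$. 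Without $\textbf{j}_1 > b_2$ you cannot even invoke \Cref{rem:lemma}(a) to get $\textbf{i}_f \leq b_2$ as you attempt to. Finally, the paper's contradiction comes from a $\mathcal{G}_1$ binomial, not a $\mathcal{G}_3$ one: a second counting argument (using \Cref{obs:1}(i), the extra occurrence of $\textbf{i}'_h$, and $\mathcal{V}=\mathcal{V}'$) shows $\mathcal{I}'$ contains at least two indices $\leq b_2$, say $\textbf{i}_f$ and $\textbf{i}_g$; setting $L = \max\{f,g\}$, the binomial $T_{m_1}T_{m_L} - T_{\textbf{i}_1\textbf{i}_L}T_{\textbf{j}_1\textbf{j}_L}$ lies in $\mathcal{G}_1$ and contradicts $V$ being a sink. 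Your proposal misses this decisive counting step and the $\mathcal{G}_1$ reduction entirely, so the proof has a genuine gap.
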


\begin{proposition}\label{prop:fiberUniqueSink}
Every nonempty $\Gamma_{\mu} (I_1,I_2)$ with respect to $\mathcal{G}$ has a unique sink.
\end{proposition}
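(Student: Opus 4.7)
The plan is to argue by contradiction: suppose $V$ and $V'$ are two distinct sinks of $\Gamma_\mu(I_1, I_2)$. Since the $t$- and $z$-degrees of $\mu$ fix the numbers $p$ and $q$ of $T$- and $Z$-factors in every vertex, both sinks share these values. If $p = 0$ or $q = 0$, then by \Cref{rem:trivialPQ} each sink is really a sink of a single-ideal fiber graph $\Gamma_{\mu/t^p}(I_2)$ or $\Gamma_{\mu/z^q}(I_1)$, which has a unique sink by \Cref{grobnermixedrevlex} or \Cref{grobnerrevlex}; hence $V = V'$. So I may assume $p, q \geq 1$.

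I would then case-split on the types of the last $Z$-factors $n_q$ and $n'_q$. In the matched cases, when both $n_q, n'_q \in \MB_{N_2}$ or both $n_q, n'_q \in \MB_{M_2}$, the strategy is to show $\mu_2 = \mu'_2$ via an index-bookkeeping comparison tracking where the ``large'' indices (those exceeding $b_2$) of $\mu$ must originate: those coming from $V_2$ are constrained by \Cref{atMostj_1}(a) and \Cref{TailComparisonMixRevlex}, while those coming from $V_1$ can only be supplied by $\MB_{N_1}$-factors and are constrained by \Cref{TailComparisonRevlex}. Once $\mu_2 = \mu'_2$ is established, $\mu_1 = \mu'_1$ as well, and the uniqueness of sinks in $\Gamma_{\mu_1}(I_1)$ and $\Gamma_{\mu_2}(I_2)$ from \Cref{grobnerrevlex} and \Cref{grobnermixedrevlex} forces $V_1 = V'_1$ and $V_2 = V'_2$, contradicting $V \neq V'$.

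The main obstacle is the mismatched case, which (after swapping the roles of $V$ and $V'$ if necessary) we may take to be $n_q \in \MB_{N_2}$ and $n'_q \in \MB_{M_2}$. This is precisely the setting of \Cref{atMostj_1}, \Cref{lem:obs}, and \Cref{atMostc_1}. I would apply \Cref{atMostc_1} to split into two sub-cases: either $\textbf{i}_1 \leq c_1$, or some $m_k$ lies in $\MB_{N_1}$. In each sub-case, combining that conclusion with \Cref{lem:obs}(a)--(c), the chain inequalities of \Cref{TailComparisonRevlex} and \Cref{TailComparisonMixRevlex}, and the multiset bookkeeping of \Cref{obs:1} and \Cref{rem:lemma}, I expect to locate a pair of factors $T_\alpha Z_\beta$ dividing $V$ for which there is an alternative factorization $\alpha\beta = \alpha'\beta'$ with $\alpha' \in \MB_{M_1} \cup \MB_{N_1}$, $\beta' \in \MB_{M_2} \cup \MB_{N_2}$, and $\beta \succ_{mrlex} \beta'$. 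This produces an element of $\mathcal{G}_3$ whose leading monomial divides $V$, contradicting the sink property.

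The technically demanding step will be organizing the index count in the mismatched case, tracking carefully which indices in $\mu$ are accounted for by factors of type $\MB_{M_1}$, $\MB_{N_1}$, $\MB_{M_2}$, $\MB_{N_2}$ in $V$ versus $V'$, and using pigeonhole arguments on the lists $\mathcal{V}$, $\mathcal{V}'$, $\mathcal{I}$, $\mathcal{J}$, $\mathcal{N}$, $\mathcal{N}^c$ introduced earlier to guarantee the existence of the reducible pair $T_\alpha Z_\beta$.
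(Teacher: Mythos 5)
Your plan mirrors the paper's proof in all essential respects: the reduction to $p, q \geq 1$ via \Cref{rem:trivialPQ}, the case split on the types of the last $Z$-factors $n_q, n'_q$, the dispatching of the matched case by tracking which variables of $\mu$ can be supplied by which factors, and in the mismatched case the appeal to \Cref{atMostc_1} to split into two subcases followed by a pigeonhole/index-bookkeeping argument on the lists from \Cref{obs:1} and \Cref{rem:lemma} to exhibit a forbidden reduction. The proposal is a high-level outline rather than a complete argument, but the structure and key lemmas it invokes coincide with those used in \Cref{sec:6}.
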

\begin{proof}
Let $\mu$ be a multidegree in $S[t,z]$ such that $\Gamma_{\mu} (I_1,I_2)$ is nonempty. Let 
$V$  be a vertex in $\Gamma_{\mu} (I_1,I_2)$ of the form 
$V= (T_{m_1}\cdots T_{m_p})(Z_{n_1} \cdots Z_{n_q})$. If $p=0$ or $q=0,$ the fiber graph $\Gamma_{\mu} (I_1,I_2)$ becomes a fiber graph of $I_1$ or $I_2$ as discussed in \Cref{rem:trivialPQ}. It follows from \Cref{cor:uniqueSink}  and \Cref{cor:uniqueSink2} that both fiber graphs have unique sinks. For the remainder of the proof, we may assume that $p,q\geq 1$. 

We proceed by contradiction. Suppose $\Gamma_{\mu} (I_1,I_2)$ has two different sinks $V$ and $V'$. It is immediate that $\mu_2 \neq \mu'_2$ following  \Cref{twoV}. Otherwise, we must have $\mu_1 = \mu'_1$ which in turn implies that $V=V'$, a contradiction. Furthermore, we may assume that $n_q \neq  n'_q$. Otherwise, we can consider the multidegree $\mu/ n_q$;  if $n_{q-1} = n'_{q-1}$, consider  the multidegree $\mu/ n_qn_{q-1}$; and so on. Without loss of generality, let $n_q \succ_{mrlex} n'_q$.  Rest of the proof is established by considering all possible regions where $n_q$ and $n'_q$ belong to in $\MB (M_2,N_2)$. In particular, we collect all possibilities under two cases and show that neither of them is possible. Hence, the fiber graph $\Gamma_{\mu} (I_1, I_2)$ must have a unique sink.

\textbf{Case I:} \emph{Suppose $n_q=x_ix_j$ and $n'_q=x_{i'}x_{j'}$ are both in $\MB_{M_2}$ or $\MB_{N_2}$.} 

We first consider the case when they are both in  $\MB_{N_2}$. It follows from our assumption $n_q \succ_{mrlex} n'_q$ that $j<j'$ or $i<i'$. Suppose $j<j'$. Since $i\leq j$, $x_{j'}$ does not divide $n_q$. Moreover, there exists no $Z_{n_l}$ in the support of $V$ such that $x_{j'}$ divides $n_l$ for $l\in [ q-1]$.  Otherwise, $n_q \succ_{mrlex} n_l$ contradicts the definition of $n_q$. Thus, $\mu_2$ is not divisible by $x_{j'}$ while $\mu$ is. Then there exists  $T_{m_l}$ in the support of $V$ such that $x_{j'}$ divides $m_l$. Since $m_l \in \MB (M_1,N_1)$  is divisible by $x_{j'}$ while $j<j'$, the monomial $m' := x_j (m_l/x_{j'})  \in \MB (M_1,N_1)$. In addition, it is immediate that $x_i x_{j'} \in \MB_{N_2}$ and $n_q \succ_{mrlex} x_ix_{j'}$. Hence,  $T_{m_l} Z_{n_q} - T_{m'} Z_{ij'} \in \mathcal{G}$, a contradiction. Then, we must have $j'\leq j$. But $n_q \succ_{mrlex} n_q'$. Thus, we conclude that $j=j'$.

Suppose $i<i'$. Note that $n_l \in \MB_{N_2}$ for each $n_1, \ldots, n_q$ from \Cref{N_2} (a).  Then by \Cref{TailComparisonMixRevlex} and the fact that $i< i'\leq c_2$, there exists no $n_l$ such that $x_{i'}$ divides $n_l$. Thus, $\mu_2$ is not divisible by $x_{i'}$. Since $\mu$ is divisible by $x_{i'}$, there exists $T_{m_l}$ in the support of $V$ such that $x_{i'}$ divides $m_l$. As in the previous paragraph, by letting $m':= x_i (m_l/x_{i'})$, we have $m' \in \MB (M_1,N_1)$.  Hence, $T_{m_l} Z_{n_q} - T_{m'} Z_{i'j} \in \mathcal{G}$, a contradiction.  Therefore, this case is not possible. 

Investigation of the remaining case yields a contradiction, as well. The arguments in
this step are very similar to the previous two paragraphs, so we omit the details.

\textbf{Case II:} \emph{Suppose $n_q \in \MB_{N_2} $ and $n'_q \in \MB_{M_2}$.}

Based on \Cref{atMostc_1}, one needs to consider two subcases to investigate whether the fiber graph can have two sinks. By employing our auxiliary lemmas, we provide a contradiction for each of the two cases and it completes the proof.

\textbf{Subcase 1:} \emph{Suppose} $\textbf{i}_1$ \emph{is at most}  $c_1$.

First note that $\textbf{j}_1>b_2$ because $m_1= x_{\textbf{i}_1} x_{\textbf{j}_1}$ is not in $\MB (x_{c_1}x_{b_2})$ by \Cref{lem:obs} (a). Then  $\textbf{j}_k >b_2$ for each $k \in [p]$ since $\textbf{j}_1$ is the smallest element in $\{\textbf{j}_1,\ldots, \textbf{j}_p\}.$ By \Cref{rem:lemma} (a), 
$e' = \textbf{j}_f$ and $e = \textbf{i}_f\leq a_1$  where $m_f$ is the monomial from \Cref{obs:1} (iii). Additionally, the monomial  $m_1$ is in $ \MB_{M_1}$ because $ \textbf{j}_1 \leq a_1$ by \Cref{lem:obs} (b) and all first indices of monomials in $\MB(M_1,N_1)$ are at most $a_1$.

In what follows, we conclude that $c_1 < \textbf{j}_1$. On the contrary, suppose $\textbf{j}_1 \leq c_1$. Then, we have $x_{\textbf{i}_1}x_{\textbf{i}_f}  \in \MB_{M_1}$ and $x_{\textbf{j}_1}x_{\textbf{j}_f} \in \MB (M_1, N_1)$ by checking the indices; moreover,  $m_1\succ_{rlex} m_f\succ_{rlex} x_{\textbf{j}_1}x_{\textbf{j}_f}$ because $e=\textbf{i}_f\leq b_2<\textbf{j}_1$. Thus, $T_{m_1} T_{m_f} - T_{\textbf{i}_1\textbf{i}_f}T_{\textbf{j}_1\textbf{j}_f} \in \mathcal{G}$, a contradiction.  

In this step, we show that if  $\textbf{i}_k \leq c_1$ for some $\textbf{i}_k$ in $\{\textbf{i}_2, \ldots, \textbf{i}_p\}$, then $ m_k \in \MB_{N_1}$. Otherwise, if $ m_k \in \MB_{M_1}$, we have $\textbf{j}_1 \leq a_1$
(from two paragraphs back) and $\textbf{j}_k \leq b_1$ so $x_{ \textbf{j}_1}x_{ \textbf{j}_k} \in \MB_{M_1}$. 
Additionally, $x_{ \textbf{i}_1}x_{ \textbf{i}_k} \in  \MB_{M_1}$  because $m_k \in B_{M_1}$ and $\textbf{i}_k \leq \textbf{j}_k\leq b_1$. We have $\textbf{i}_k \leq c_1$ (by assumption) and $c_1 < \textbf{j}_1$ (by the previous paragraph), so $\textbf{i}_k < \textbf{j}_1$. Thus $m_1 \succ_{rlex}  m_k \succ_{rlex}  x_{ \textbf{j}_1}x_{ \textbf{j}_k}$ and $T_{m_1} T_{m_k} - T_{\textbf{i}_1 \textbf{i}_k}T_{\textbf{j}_1 \textbf{j}_k} \in \mathcal{G}$, a contradiction.

 In the next step, our goal is to show that there exists $t>1$ such that $m_t$ in $\MB_{M_1}$ when $p\geq 2$. Recall that $p$ is the number of $T$ variables in the support of a vertex of $\Gamma_{\mu}(I_1,I_2)$ as in \Cref{twoV}.  On the contrary, suppose all the monomials in $\{m_2, \ldots, m_p\}$ are in $\MB_{N_1}$.  Let $\mathcal{M}'=\{ \textbf{j}_2, \ldots, \textbf{j}_p, j_1, \ldots, j_q\}\subseteq \mathcal{V}_1 \cup \mathcal{V}_2$. Notice that each element of this list is greater than $b_1$ by our assumption that each $m_k \in \MB_{N_1}$ and \Cref{lem:obs} (c). Furthermore, each of its elements is greater than $b_2$  since $b_2 < \textbf{j}_1$ by the  first step and \Cref{atMostj_1} (a). Note that $\mathcal{V}'_1$ can have at most $p$ elements which are greater than $b_1$. In addition, $\mathcal{V}'_2$ can have at most $q-1$ such elements which are greater than $b_2$ by \Cref{obs:1} (i). Since $\mathcal{M}'$ have $p+q-1$ elements and $\mathcal{V}=\mathcal{V}'$, the list $\mathcal{V}'_1$ has exactly $p$ elements from $\mathcal{M}'$ and the remaining $q-1$ elements of $\mathcal{M}'$ must be in $\mathcal{V}'_2$. In particular, it means $\mathcal{M}'=\{ \textbf{j}'_1, \ldots, \textbf{j}'_p, j'_1, \ldots, j'_{q-1}\}$  since $n'_q \in \MB_{M_2}$. This implies that each $m'_k \in \MB_{N_1}$ and each $n'_l $ belongs to $\MB_{N_2}$ for  $k \in [p]$ and $l \in [q-1]$. Then $\mathcal{V}' \setminus \mathcal{M}'$ contains elements which are either at most $c_1$ or $b_2$. Moreover, this list must contain $\textbf{j}_1$ since   $\mathcal{V}=\mathcal{V}'$. This is not possible because $\textbf{j}_1$ is greater than both $c_1$ and $b_2$ from  the first two steps of this subcase. Therefore, there exists $t>1$ such that $m_t \in \MB_{M_1}$ when $p\geq 2$.

It is worth noting that $\textbf{j}_1 \leq \textbf{i}_t$ for each  $m_t \in \MB_{M_1}$ where $t >1$. Otherwise, both monomials  $x_{\textbf{i}_1} x_{\textbf{i}_t} $  and $x_{\textbf{j}_1} x_{\textbf{j}_t}$ are contained   in $\MB_{M_1}$. In addition, $m_1 \succ_{rlex} m_t \succ_{rlex} x_{\textbf{j}_1} x_{\textbf{j}_t}$. Thus, the binomial $T_{m_1} T_{m_t} - T_{\textbf{i}_1\textbf{i}_t }T_{\textbf{j}_1\textbf{j}_t } $ belongs to $ \mathcal{G}$, a contradiction.

Let $m_1, \ldots, m_s \in \MB_{M_1}$ and $m_{s+1}, \ldots, m_p \in \MB_{N_1}$. Then, by making use of \Cref{TailComparisonRevlex}, \Cref{atMostj_1}, and the main conclusions of each step of this subcase, we  obtain the following chain of inequalities.
\begin{align*}
 \textbf{i}_1 \leq  &\textbf{i}_{s+1} \leq \cdots \leq  \textbf{i}_{p} \leq c_1< \textbf{j}_{1}  \leq \textbf{i}_2 \leq \cdots \leq \textbf{i}_s \leq a_1\\
c_1, b_2 < \textbf{j}_1& \leq \cdots \leq \textbf{j}_s \leq b_1 < \textbf{j}_{s+1} \leq \cdots \leq \textbf{j}_p \leq j_1 \leq \cdots \leq j_q
\end{align*}
Consider the list $\mathcal{O} = \{  \textbf{i}_2, \ldots,  \textbf{i}_s,  \textbf{j}_1, \ldots,  \textbf{j}_p, j_1, \ldots, j_q\} \subseteq \mathcal{V}= \mathcal{V}'_1 \cup \mathcal{V}'_2$ with $p+q+s-1$ elements. Note that each element of $\mathcal{O}$ is greater than both $c_1$ and $b_2$. Then, $\mathcal{V}'_1$ must contain at least $p+s$ many elements from $\mathcal{O}$ since $\mathcal{V}'_2$ can have at most $q-1$ elements from $\mathcal{O}$ by \Cref{obs:1} (i). It follows from the pigeonhole principle that there exists $m'_{l_1} \succ_{rlex} \cdots \succ_{rlex} m'_{l_s}$ such that  $\textbf{i}'_{l_k}, \textbf{j}'_{l_k} \in \mathcal{O}$ for each $k\in [s]$. Since each $\textbf{i}'_{l_k} \in \mathcal{O}$ is greater than $c_1$, each monomial  $m'_{l_k}$ must be contained in $ \MB (M_1) \setminus \MB (N_1)$, in other words, $\textbf{i}'_{l_k} \leq \textbf{j}'_{l_k}\leq b_1$ for $k \in [s]$.  As a result, the list $\mathcal{O}$ has at least $2s$ elements which are at most $b_1$. However, the only such elements of $\mathcal{O}$ are $\{  \textbf{i}_2, \ldots,  \textbf{i}_s,  \textbf{j}_1, \ldots,  \textbf{j}_s\}$, a contradiction.  As a result, Subcase 1 is eliminated. 

\textbf{Subcase 2:} \emph{Suppose} $\textbf{i}_1>c_1$ \emph{and one of the monomials from} $m_2, \ldots, m_p$ \emph{is in} $\MB_{N_1}$.
 
 Let $m_1, \ldots, m_s \in \MB_{M_1}$ and $m_{s+1}, \ldots, m_p \in \MB_{N_1}$ where $s\geq 1$. Then, we  obtain the following chain of inequalities  by making use of   \Cref{TailComparisonRevlex}, \Cref{N_2} (a) and (b), \Cref{NotBc_1b_2} (a) and (c).
\begin{align}\label{ex:3}  
\textbf{i}_{s+1} \leq \cdots \leq  \textbf{i}_{p} \leq c_1< \textbf{i}_{1} \leq \cdots \leq \textbf{i}_s \leq a_1 \notag &\text{ and } c_1 <  \textbf{j}_1 \leq \cdots \leq \textbf{j}_s \leq b_1 \notag\\
b_1, b_2 < \textbf{j}_{s+1}  \leq \cdots \leq \textbf{j}_p \leq j_1 \leq \cdots \leq j_q &\text{ and }
i_1 \leq \cdots \leq i_q \leq c_2
\end{align}
 Consider the list $\mathcal{O} = \{  \textbf{j}_{s+1}, \ldots,  \textbf{j}_p, j_1, \ldots, j_q\} \subseteq \mathcal{V}$ with $p+q-s$ elements.  Note that $\mathcal{O} $ is the collection of all elements of $\mathcal{V}$ which are greater than both $b_1$ and $b_2$. Recall from  \Cref{obs:1} (i) that $\mathcal{V}'_2$ can have at most $q-1$ elements from $\mathcal{O} $. Thus,   the list $\mathcal{V}'_1$ must contain at least $p-s+1$ many elements from $\mathcal{O} $. Suppose $\mathcal{V}'_1$ contains exactly $\tilde{p}:= p+t-s+1$  many of the elements of $\mathcal{O}$ and the remaining $\tilde{q}:=q-t-1$ elements of $\mathcal{O}$ are contained in $\mathcal{V}'_2$ where $0 \leq t \leq q-1$.
 Then there exists $\textbf{i}'_{l_k}, \textbf{j}'_{l_k} \in \mathcal{V}'_1$  such that each $\textbf{j}'_{l_k} \in \mathcal{O} $ for $k\in [\tilde{p}]$. Since elements of $\mathcal{O}$ are greater than $b_1$, we must have $\textbf{i}'_{l_k} \leq  c_1$  for each $k\in [\tilde{p}]$. It means that $\mathcal{V}$ has at least $p-s+1$ many elements which are less than $c_1$. In what follows, we show that $\mathcal{V}$  has only $p-s$ such elements and it yields to a contradiction. Hence, Subcase 2 is eliminated and, as a result,  the fiber graph can not have two sinks satisfying the statement of Case II. 
 
Let $\mathcal{C}$ be the collection of all elements of  $\mathcal{V}$ which are less than $c_1$. It follows from the  chain of inequalities given in (\ref{ex:3}) that $ \mathcal{C} \subseteq \{ \textbf{i}_{s+1}, \ldots, \textbf{i}_p, i_1, \ldots, i_q \}$. We claim that $\mathcal{C}= \{ \textbf{i}_{s+1}, \ldots, \textbf{i}_p\}$, in other words,  $\mathcal{C}$ has $p-s$ elements. In order to prove the claim, it suffices to show  that $c_1 < i_1$. 

Let $\mathcal{S} =\{\textbf{i}_k, \textbf{j}_k : k \in [s]\} \subseteq \mathcal{V} = \mathcal{V}'_1 \cup \mathcal{V}'_2$ and set $\mathcal{S}_i =\{ u \in \mathcal{V}'_i : u \in \mathcal{S}\}$ for $i=1,2$. Note that none of the elements of $ \mathcal{S}_1$ is equal to  $\textbf{i}'_{l_k}$ or $ \textbf{j}'_{l_k}$  because $\textbf{i}'_{l_k} \leq c_1$ and $b_1<\textbf{j}'_{l_k} $ for $k \in [\tilde{p}]$. Thus,  the list $ \mathcal{S}_1$  is contained in $\mathcal{V}'_1 \setminus \{ \textbf{i}'_{l_k}, \textbf{j}'_{l_k} : k \in [\tilde{p}]\}$. This means $ \mathcal{S}_1$ has at most $2s-2t-2$  elements while  $ \mathcal{S}_2$ has at least $2t+2$  elements.  For each $u \in \mathcal{S}_2$, we have either $u \leq c_2$ or $u > c_2$. If $u \leq c_2$ for some $u \in \mathcal{S}_2,$ then $\textbf{i}_1 \leq u \leq c_2$. It follows from \Cref{atMostj_1}  (c) that $c_1 < \textbf{i}_1 \leq i_1$, proving the claim. 

Suppose each elements of $\mathcal{S}_2$ are greater than $ c_2$. Recall that $\mathcal{V}'_2$ has exactly $\tilde{q}$ elements from $\mathcal{O}$; there exists $i'_{k_l}, j'_{k_l} \in \mathcal{V}'_2$ such that each $j'_{k_l} \in \mathcal{O}$ for $l \in [\tilde{q}]$. Since each element of $\mathcal{O}$ is greater than $b_2$,  each $i'_{k_l} \leq c_2$ for  $l \in [\tilde{q}]$ which implies that each $n'_{k_l} \in \MB_{N_2}$. Note that none of the elements of $\mathcal{S}_2$ is equal to $i'_{k_l}$ or $j'_{k_l}$ for $l \in [\tilde{q}]$. Thus, $\mathcal{S}_2$ is contained in $\mathcal{V}'_2 \setminus \{i'_{k_l}, j'_{k_l}:  l \in [\tilde{q}] \}$. Since $\mathcal{S}_2$ has at least $2t+2$ elements while the latter set has exactly $2t+2$ elements, we must have 
\begin{align*}
  \mathcal{S}_2 = \mathcal{V}'_2 \setminus \{i'_{k_l}, j'_{k_l}:  l \in [\tilde{q}] \}  \text{ and } \mathcal{S}_1 = \mathcal{V}'_1 \setminus \{ \textbf{i}'_{l_k}, \textbf{j}'_{l_k} : k \in [\tilde{p}]\}
\end{align*}
where the second equality follows from observing $ \mathcal{S}_1$ must have $2s-2t-2$ elements. In the light of the first equality and the fact that each element of $\mathcal{S}_2$ is greater than $c_2$, the list $\mathcal{S}_2$ contains pairs  $i'_l, j'_l$ where the associated monomial $n'_l \in \MB (M_2 )\setminus \MB (N_2)$.  Recall that for the pairs that are not in $\mathcal{S}_2$, their associated monomials are in $\MB_{N_2}$. Since we use the $>_{mrlex}$ order on the $Z$ variables, we must have $\mathcal{S}_2 = \{ i'_l, j'_l : l \in \{\tilde{q}+1,\dots,q\}\}$ where $i'_l \leq a_2$ and $j'_l \leq b_2.$ 
 
In the final step, our goal is to show that $m_1 \in \MB_{M_2}$. Notice that if $m_1 \in \MB_{M_2}$, then $c_1<i_1$.  Otherwise, we have $n_1 \in \MB_{N_1}$  by (\ref{ex:3}). Then $n_1\succ_{mrlex}m_1$ and $T_{m_1} Z_{n_1} - T_{n_1} Z_{m_1} \in \mathcal{G}$, a contradiction. Consider the list $\mathcal{S}_2$. Since $\textbf{i}_1$ is the smallest element in $\mathcal{S},$ it is clear that $\textbf{i}_1 \leq a_2$ because  $i'_l \leq a_2$ for $i'_l \in \mathcal{S}_2$. Similarly, since $\textbf{j}_1$ is the smallest among $\textbf{j}_1, \ldots, \textbf{j}_s$, if $\mathcal{S}_2$ contains at least one $\textbf{j}_k$ for $k \in [s]$, then $\textbf{j}_1 \leq b_2.$ So the only case that we need to consider is when $\mathcal{S}_2$ does not contain any such element. Note that it happens when $\textbf{j}_k \in \mathcal{S}_1  $ for each $k \in [s]$.  By the pigeonhole principle, there exists $\textbf{i}'_k, \textbf{j}'_k \in \mathcal{S}_1$ such that $\textbf{i}'_k = \textbf{j}_u$ and $\textbf{j}'_k = \textbf{j}_v$ for some $u, v \in [s]$ and it implies $\textbf{j}_1 \leq \textbf{j}_u \leq a_1$.  Observe that $\textbf{j}_1 \leq \textbf{i}_2.$ Otherwise, $m_1\succ_{rlex} m_2 \succ_{rlex}x_{\textbf{j}_1}x_{\textbf{j}_2}$ and $T_{m_1} T_{m_2} - T_{\textbf{i}_1\textbf{i}_2}T_{\textbf{j}_1\textbf{j}_2} \in \mathcal{G},$ a contradiction. Since $\mathcal{S}_2$ has at least two elements, there exists $k>1$ such that $\textbf{i}_k \in \mathcal{S}_2$. Thus $\textbf{j}_1 \leq \textbf{i}_2 \leq b_2.$ Therefore, $m_1 \in \MB_{M_2}$ and it concludes the proof.
\end{proof}

As an immediate corollary of \Cref{thm:GBs7}, we obtain the following result on the Koszulness of multi-Rees algebras.

 \begin{corollary}\label{thm:MultiReesGB}
The multi-Rees algebra, $\mathcal{R} (I_1 \oplus I_2)$, of strongly stable ideals $I_1$ and $I_2$ with two quadric Borel generators is Koszul. In particular, products of powers of $I_1$ and $I_2$ have linear resolutions.
\end{corollary}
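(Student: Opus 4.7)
The plan is to derive the corollary directly by chaining the two main structural results the paper has established, namely Theorem \ref{multireesisfibertype} (the multi-Rees algebra of strongly stable ideals is of fiber type, and a Gröbner basis of its defining ideal $\mathcal{L}$ is obtained from one of the toric ideal $T(I_1 \oplus I_2)$ together with explicit quadratic linear syzygies) and Theorem \ref{thm:GBs7} (a Gröbner basis for $T(I_1 \oplus I_2)$ with respect to the head and tail order $\succ_{ht}$).

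First, I would note that the collection $\mathcal{G} = \mathcal{G}_1 \cup \mathcal{G}_2 \cup \mathcal{G}_3$ from Notation \ref{GquadraticTotal}, whose members are all \emph{quadratic} binomials in the $T$ and $Z$ variables, is by Theorem \ref{thm:GBs7} a Gröbner basis for $T(I_1 \oplus I_2)$. This already exhibits $\mathcal{F}(I_1 \oplus I_2)$ as a $G$-quadratic algebra, hence a Koszul algebra by the classical implication that a Gröbner basis of quadratic relations forces Koszulness (see \cite[Theorem~6.7]{EH}, \cite{F}).

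Next I would apply Theorem \ref{multireesisfibertype} with $r = 2$. Since $\mathcal{G}$ is a Gröbner basis of $T(I_1 \oplus I_2)$, the enlarged collection
\[
\mathcal{G}' \;=\; \bigl\{\, x_i T_{l,k} - x_j T_{l,k'} \;:\; i < j,\ x_i u_{l,k} = x_j u_{l,k'} \,\bigr\} \,\cup\, \mathcal{G}
\]
is a Gröbner basis for the defining ideal $\mathcal{L}$ of $\mathcal{R}(I_1 \oplus I_2)$. The added syzygy binomials are manifestly of degree two (one $x$-variable times one $T$-variable), and $\mathcal{G}$ is itself quadratic by the previous step, so $\mathcal{G}'$ is a quadratic Gröbner basis. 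Therefore $\mathcal{R}(I_1 \oplus I_2)$ is $G$-quadratic, and consequently Koszul.

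For the final clause about linear resolutions, I would invoke the multigraded result of Bruns and Conca already cited in the introduction, \cite[Theorem~3.4]{BC}, which translates Koszulness of a multi-Rees algebra into the statement that every product $I_1^{a_1} I_2^{a_2}$ admits a linear resolution. No genuine obstacle is anticipated, since all the analytic work has been absorbed into Theorem \ref{thm:GBs7} (the fiber-graph/unique-sink argument) and Theorem \ref{multireesisfibertype} (the fiber-type reduction); the corollary is a short assembly of these two ingredients with a standard Koszulness-implies-linear-powers citation.
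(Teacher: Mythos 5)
Your proposal is correct and follows exactly the paper's own argument: cite Theorem \ref{thm:GBs7} for a quadratic Gr\"obner basis of $T(I_1\oplus I_2)$, feed this into Theorem \ref{multireesisfibertype} to obtain a quadratic Gr\"obner basis for the defining ideal $\mathcal{L}$ (noting the added syzygy binomials are degree two), conclude Koszulness via \cite[Theorem~6.7]{EH}, and obtain linear resolutions of products from \cite[Theorem~3.4]{BC}. No meaningful differences.
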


\begin{proof}
Recall from \Cref{thm:GBs7} that the toric ideal  $T(I_1 \oplus I_2)$ has a quadric Gr\"obner basis with respect to the head and tail order. It then follows from \Cref{multireesisfibertype} that the defining ideal of $\mathcal{R} (I_1 \oplus I_2)$ has a quadric Gr\"obner basis. Thus $\mathcal{R} (I_1 \oplus I_2)$ is Koszul by \cite[Theorem~6.7]{EH}. The second statement follows from \cite[Theorem~3.4]{BC}.
\end{proof}

We close the paper with the following question inviting interested researchers to study Koszulness of the multi-Rees algebras of strongly stable ideals.

\begin{question}
Let $I_1, \ldots, I_r$ be a collection of strongly stable ideals satisfying one of the conditions given in \Cref{prop:KoszulCases}. 
Is the multi-Rees algebra $\mathcal{R} (I_1 \oplus \cdots \oplus I_r)$ Koszul?
\end{question}

\textbf{Proof of \Cref{atMostj_1}.}
(a) Recall that  $Z_{n_q}$ is the least variable among  $Z_{n_1}, \ldots, Z_{n_q}$ with respect to the head and tail order. Furthermore, a monomial in $B_{N_2}$ is greater than any monomial in $B_{M_2}$ in $\succ_{mrlex}$ order. Hence if the least of the monomials $n_1, \ldots, n_q$ (namely $n_q$) is in $B_{N_2}$, then all of $n_l$ must be in $ \MB_{N_2}$ for $l \in [q]$. Then, we have Inequality (\ref{eq:1}) from \Cref{TailComparisonMixRevlex}.

(b) Note that $\textbf{j}_k \leq \textbf{j}_p$ for each $k\in  [p]$. Thus, it suffices to show that $ \textbf{j}_p \leq j_1$. On the contrary, suppose $ j_1 < \textbf{j}_p $.  Then,  $x_{\textbf{i}_p }x_{j_1} $ is contained in $ \MB (M_1,N_1)$ because $m_p= x_{\textbf{i}_p }x_{\textbf{j}_p } \in \MB (M_1,N_1)$ while $j_1 < \textbf{j}_p $. In addition, observe that $ x_{i_1} x_{\textbf{j}_p }$ is in $\MB_{N_2}$  by (\ref{eq:1}) and our assumption $d_1 \leq d_2$. Note that $n_1\succ_{mrlex} x_{i_1} x_{\textbf{j}_p } $. Hence, the binomial $T_{m_p} Z_{n_1} - T_{\textbf{i}_pj_1} Z_{i_1 \textbf{j}_p}$ belongs to $ \mathcal{G}$, which contradicts to our assumption on $V$ being a sink.  Therefore,  the index $ \textbf{j}_p $ is at most $ j_1$.

(c) Suppose $i_1 < \textbf{i}_k \leq c_2$ for some $k \in [p]$. Since $m_k = x_{\textbf{i}_k } x_{\textbf{j}_k } \in \MB (M_1, N_1)$ and $i_1 < \textbf{i}_k$, we must have $x_{i_1 } x_{\textbf{j}_k } \in \MB (M_1, N_1)$. Furthermore,  the monomial $x_{\textbf{i}_k } x_{j_1} $ is contained in $ \MB_{N_2}$ and  $n_1\succ_{mrlex} x_{\textbf{i}_k } x_{j_1}$. Thus, $T_{m_k} Z_{n_1} - T_{i_1 \textbf{j}_k } Z_{\textbf{i}_k j_1} \in \mathcal{G}$, a contradiction.
\qed

\textbf{Proof of \Cref{lem:obs}.}
We may assume that the sink vertices $V$ and $V'$ are relative prime, otherwise one may factor out the common factors. (a) On the contrary, suppose there exists $k\in [p]$ such that $m_k= x_{\textbf{i}_k} x_{\textbf{j}_k} \in \MB (x_{c_1}x_{b_2})$. Note that the monomial $x_{c_1}x_{b_2}$ is not written according to our standard notation as it is possible to have $b_2 < c_1$ while we set $\textbf{i}_k\leq c_1$ and $\textbf{j}_k \leq b_2$. Consider the index $r$ from \Cref{obs:1} (ii) such that $n_r= x_{i_r} x_{j_r}$ where $i_r \leq c_2$ and $b_2 < j_r \leq d_1$. Then $x_{i_r} x_{\textbf{j}_k} \in \MB_{M_2}$ and $x_{\textbf{i}_k} x_{j_r} \in \MB (M_1,N_1)$ which can be verified by checking the indexes. Note that $n_r\succ_{mrlex}x_{i_r} x_{\textbf{j}_k}$. Thus, the binomial $T_{m_k} Z_{n_r} - T_{\textbf{i}_kj_r } Z_{i_r \textbf{j}_k} $ is contained in $ \mathcal{G}$, a contradiction.

(b) Suppose $a_1<\textbf{j}_1$ and let $\mathcal{J}'=\{\textbf{j}_1, \ldots, \textbf{j}_p\}$. Observe that each element of $\mathcal{J} \cup \mathcal{J}'  \subset \mathcal{V}$ is greater than $a_1$ and $b_2$ by \Cref{atMostj_1} and (\ref{eq:1}).   Thus $\mathcal{V}$ has at least $p+q$ such elements. Recall from \Cref{obs:1} (i) that $\mathcal{N} $ is the list of all such elements in $\mathcal{V}_2'$ and it has at most $q-1$ elements. Since $\mathcal{V}= \mathcal{V}' =  \mathcal{V}'_1 \cup  \mathcal{V}'_2$, at least $p+1$ elements of $\mathcal{J} \cup \mathcal{J}'$ must be contained in  $\mathcal{V}'_1$. It follows from the pigeonhole principle that there exists $k \in [p]$ such that $m'_k = x_{\textbf{i}'_k} x_{\textbf{j}'_k}$ where $\textbf{i}'_k, \textbf{j}'_k \in \mathcal{J} \cup \mathcal{J}'$. Then $a_1 < \textbf{i}'_k\leq  \textbf{j}'_k$ which implies that  $m'_k \notin \MB (M_1, N_1)$, a contradiction. 

(c) On the contrary, suppose $j_1 \leq b_1$. We first claim that $\textbf{j}_k >b_2$ for all $k \in [p]$. Otherwise, suppose there exists  $k \in [p]$ such that $\textbf{j}_k \leq b_2 <j_1$.  Then $x_{i_1} x_{\textbf{j}_k} \in \MB_{M_2}$ as  $n_1=x_{i_1}x_{j_1} \in \MB_{N_2}$. Note that $x_{i_1} x_{\textbf{j}_k}$ comes later than $n_q$ with respect to $\succ_{mrlex}$.  Furthermore, we have $x_{\textbf{i}_k} x_{j_1} \in \MB_{M_1}$ since $j_1 \leq b_1$. Thus, $T_{m_k} Z_{n_1} - T_{\textbf{i}_k j_1 } Z_{i_1 \textbf{j}_k} \in \mathcal{G}$, a contradiction.  Thus, the claim holds. 

It follows from the claim that the index $e$ from \Cref{obs:1} (iii) must belong to the list $\{\textbf{i}_1 , \ldots, \textbf{i}_p\}$ and, as a result, we have $\textbf{i}_1  \leq b_2 < j_1$ by \Cref{I1I2} and \Cref{twoV}.  In addition,  $\textbf{j}_1 \leq a_1$ by part (b)  of this lemma.  Then, the monomial $x_{i_1} x_{\textbf{i}_1} $ is in $ \MB_{M_2}$ because $i_1 \leq c_2$ and $\textbf{i}_1 \leq b_2$; moreover, $x_{\textbf{j}_1} x_{j_1} \in \MB_{M_1}$ as $\textbf{j}_1 \leq a_1$ and $j_1 \leq b_1$. Recall that $n_1 \succ_{mrlex} x_{i_1} x_{\textbf{i}_1}$ as $n_1 \in \MB_{N_2}$ by \Cref{atMostj_1}. Thus, the binomial $T_{m_1} Z_{n_1} - T_{\textbf{j}_1 j_1} Z_{i_1 \textbf{i}_1}$ belongs to $ \mathcal{G}$, a contradiction.   
\qed

\textbf{Proof of \Cref{atMostc_1}.} 
   On the contrary, suppose $c_1 < \textbf{i}_1$ and $m_k \in \MB_{M_1}$ for each $k\in [p]$. It follows from \Cref{TailComparisonRevlex} (a) that 
 \begin{align}\label{eq:2}
 c_1 < \textbf{i}_1 \leq \textbf{i}_2 \leq \cdots \leq \textbf{i}_p \text{ and } \textbf{j}_1 \leq \textbf{j}_2 \leq \cdots \leq \textbf{j}_p \leq b_1 \text{ where } \textbf{i}_k \leq \textbf{j}_k \text{ for } k \in [p].    
 \end{align}
 In order to get a contradiction, our first step is to show that $n_1 \in \MB_{N_1}$.  For this purpose, recall that $e \leq b_2$ where $m_f = x_ex_{e'}$  is the monomial from  \Cref{obs:1} (iii). By using the chain of inequalities (\ref{eq:2}), we conclude that $c_1< b_2$ since $e \in \{\textbf{i}_f, \textbf{j}_f\}$. Additionally, it is clear that  the multidegree $\mu=\mu_1 \mu_2$ is divisible by $x_{\textbf{i}'_h}$ where $m'_h =x_{\textbf{i}'_h}x_{\textbf{j}'_h}$ such that $\textbf{i}'_h \leq c_1$ and $\textbf{j}'_h=j_r$  based on \Cref{rem:lemma} (b).  Then $\textbf{i}'_h$ must be in $ \mathcal{V}_1 \cup \mathcal{V}_2$ where  $\mathcal{V}_2 = \mathcal{I} \cup \mathcal{J}$. Note that $\textbf{i}'_h$ is not contained in $\mathcal{V}_1 = \{ \textbf{i}_k, \textbf{j}_k: k\in [p] \}$ as each element of this list is greater than $c_1$ by (\ref{eq:2}) whereas $\textbf{i}'_h \leq c_1<b_2$; furthermore, it can not be in  $\mathcal{J}$ as all elements of $\mathcal{J}$ are greater than $b_2$ by (\ref{I1I2}). Then the only option is $\textbf{i}'_h \in \mathcal{I}$ which implies that $i_1 \leq \textbf{i}'_h \leq c_1$. Thus, $n_1=x_{i_1} x_{j_1} \in \MB_{N_1}$ by \Cref{lem:obs} (c) and \Cref{obs:1} (ii).

Our next step is to show that $ b_2 < \textbf{j}_1 \leq a_1.$ It suffices to show that $ b_2 < \textbf{j}_1$ by \Cref{lem:obs} (b). First observe that there exists no $k\in [p]$ such that $m_k \in \MB_{M_2}$. Otherwise, $T_{m_k}Z_{n_1} - T_{n_1}Z_{m_k} \in \mathcal{G}$, a contradiction.  Note that $\mathcal{V}'_2$ has at least $q$ many elements which are at most $a_2$ while $\mathcal{V}_2 \setminus \{\textbf{i}'_h\}$ has $q-1$ many such elements.  Since $\mathcal{V}= \mathcal{V}'$, there exists $\textbf{i}_k \in \mathcal{V}_1$ such that $\textbf{i}'_h \neq \textbf{i}_k \leq a_2$. Then it follows from  (\ref{eq:2}) that $\textbf{i}_1 \leq a_2$.  Thus, $\textbf{j}_1> b_2$. If not,  the monomial $m_1= x_{\textbf{i}_1 } x_{\textbf{j}_1 }  $ is contained in $\MB_{M_2}$, a contradiction. 

In the final step of the proof, recall from \Cref{obs:1} (i) that $\mathcal{V}'_2$ has at least $q+1$ elements which are at most $b_2$ and $\mathcal{N}^c$ is the list containing all such elements. Since  $\mathcal{V} =\mathcal{V}'$ and  $\mathcal{V}_2 \setminus \{\textbf{i}'_h\}$ has $q-1$ many such elements, the list $\mathcal{V}_1=\mathcal{I}' \cup \mathcal{J}'$ must contain at least two elements from $\mathcal{N}^c$ where $\mathcal{I}'= \{\textbf{i}_1, \ldots, \textbf{i}_p\}$ and $\mathcal{J}'= \{\textbf{j}_1, \ldots, \textbf{j}_p\}$.  In particular, both such elements of $\mathcal{V}_1$ must be in $\mathcal{I}'$ because each element of $\mathcal{J}'$ is greater than $b_2$ by \Cref{atMostj_1}. Since $e= \textbf{i}_f$ is already one of these two elements, the other element must be $\textbf{i}_g$ for some $g \in [p]$. Let $L= \max \{ f,g\}$. Then $x_{\textbf{i}_1} x_{\textbf{i}_L} \in \MB_{M_1}$  as $m_1 \in \MB_{M_1}$ where $\textbf{i}_L \leq b_2 <\textbf{j}_1$. Furthermore, $x_{\textbf{j}_1} x_{\textbf{j}_L} \in \MB_{M_1}$ since $ \textbf{j}_L \leq b_1$ by (\ref{eq:2}) and $\textbf{j}_1 \leq a_1$ by the previous paragraph. Thus, $m_1 \succeq_{rlex} m_L=x_{\textbf{i}_L} x_{\textbf{j}_L} \succ_{rlex} x_{\textbf{j}_1} x_{\textbf{j}_L}$ and $T_{m_1} T_{m_L} - T_{\textbf{i}_1 \textbf{i}_L} T_{\textbf{j}_1 \textbf{j}_L}\in \mathcal{G},$ contradiction.
\qed

\par
\bibliographystyle{plain}
\bibliography{references}
\end{document}